\newtheorem {theorem}{Theorem}[section]
\newtheorem {lemma}{Lemma}[section]
\newtheorem {definition}{Definition}[section]
\newtheorem {remark}{Remark}[section]
\theoremstyle{definition}
\theoremstyle{remark}
\numberwithin{equation}{section}
\newcommand{\mc}{\mathcal}
\newcommand{\R}{\mathbb{R}}
\newcommand{\N}{\mathbb{N}}
\newcommand{\dx}{\,{\rm d} {x}}
\newcommand{\dS}{\,{\rm d} {S}}
\begin{document}

\title[Self-propelled Navier-slip]{Steady Self-Propelled Motion of a Rigid Body in a Viscous Fluid with Navier-Slip Boundary Conditions}

 \author{\v S\'arka Ne\v casov\'a $^1$\and Arnab Roy$^{2,3}$ \and Ana Leonor Silvestre$^{4}$}
  \thanks{ {\v{S}}.N. has been supported by the  Praemium Academiae of {\v{S}}. Ne{\v{c}}asov{\'{a}}. The Institute of Mathematics, CAS is supported by RVO:67985840. A.R is supported by the Grant RYC2022-036183-I funded by MICIU/AEI/10.13039/501100011033 and by ESF+. A.R has been partially supported by the Basque Government through the BERC 2022-2025 program and by the Spanish State Research Agency through BCAM Severo Ochoa CEX2021-001142-S and through project PID2023-146764NB-I00 funded by MICIU/AEI/10.13039/501100011033 and cofunded by the European Union. A. L. S. acknowledges the financial support of Funda\c{c}\~ao para a Ci\^encia e a Tecnologia (FCT), Portuguese Agency for Scientific Research, through the project
UID/PRR/04621/2025 of CEMAT/IST-ID, DOI https://doi.org/10.54499/UID/PRR/04621/2025.}

\date{}

\maketitle


\centerline{$^1$ Institute of Mathematics, Czech Academy of Sciences,}

\centerline{\v Zitn\'a 25, 115 67 Praha 1, Czech Republic.}

\centerline{$^2$ BCAM, Basque Center for Applied Mathematics}

\centerline{Mazarredo 14, E48009 Bilbao, Bizkaia, Spain.}

\centerline{$^3$IKERBASQUE, Basque Foundation for Science, }

\centerline{Plaza Euskadi 5, 48009 Bilbao, Bizkaia, Spain.}

\centerline{$^4$ Department of Mathematics and CEMAT, Instituto Superior T\'{e}cnico,}

\centerline{1049-001 Lisboa, Portugal.}

\medskip

\begin{abstract}
We investigate the steady self-propelled motion of a rigid body immersed in a three-dimensional incompressible viscous fluid governed by the Navier-Stokes equations. The analysis is performed in a body-fixed reference frame, so that the fluid occupies an exterior domain and the propulsion mechanism is modeled through nonhomogeneous Navier-slip boundary conditions at the fluid-body interface. Such conditions provide a realistic description of propulsion in microfluidic and rough-surface regimes, where partial slip effects are significant.
Under suitable smallness assumptions on the boundary flux and on the normal component of the prescribed surface velocity, we establish the existence of weak steady solutions to the coupled fluid-structure system. A key analytical ingredient is the derivation of a Korn-type inequality adapted to exterior domains with rigid-body motion and Navier-slip interfaces, which yields uniform control of both the fluid velocity and the translational and rotational velocities of the body. Beyond existence, we provide a necessary and sufficient condition under which a prescribed slip velocity on the body surface induces nontrivial translational or rotational motion of the rigid body. This is achieved through the introduction of a finite-dimensional thrust space, defined via auxiliary exterior Stokes problems with Navier boundary conditions, which captures the effective contribution of boundary-driven flows to the rigid-body motion. Our results clarify how boundary effects generate propulsion and extend the classical Dirichlet-based theory to the Navier-slip setting.

\end{abstract}

 {\bf Keywords.} Incompressible Navier-Stokes system, Fluid-rigid body interaction, Self-propelled motion Navier-slip boundary conditions.

\tableofcontents


\section{Introduction}
The interaction between viscous fluids and immersed rigid bodies represents one of the classical problems in mathematical fluid mechanics. The coupled dynamics of the body and the surrounding fluid, usually referred to as fluid–structure interaction (FSI), are governed by the Navier–Stokes equations coupled with the Newton–Euler equations for the rigid body. Such systems arise naturally in many physical contexts, from the sedimentation of particles to the motion of biological swimmers and artificial micro-robots. The mathematical analysis of these problems combines nonlinear partial differential equations with moving boundary dynamics and has been the focus of extensive research in recent decades (see, e.g., \cite{FSIforBIO,ParticlesInFlow,GaRev}).

Most classical studies consider passive motion, where the body moves solely in response to external forces such as gravity or fluid drag. In contrast, self-propelled motion refers to situations where the body generates its own propulsion by creating localized flow disturbances near its boundary. This mechanism underlies the locomotion of both macroscopic organisms (birds, fish), or microorganisms such as ciliates and flagellates as well as engineered self-propelling particles. From a mathematical perspective, such systems are modeled by imposing nonhomogeneous boundary conditions on the fluid velocity, which represent the thrust generated by surface activity in the absence of external forces. 

A pioneering analysis of steady self-propelled motion in an incompressible viscous fluid was carried out by Galdi \cite{Galdi1999}, who established the existence of steady weak solutions under Dirichlet boundary conditions. Subsequent works, including Silvestre \cite{Sil1,MR1953783}, refined the understanding of attainability and symmetry of such motions. These studies demonstrated that steady self-propulsion can emerge as a balance between hydrodynamic drag and boundary-induced thrust. However, all of these investigations relied on the no-slip assumption, where the fluid velocity coincides with that of the body along the boundary. While this condition is mathematically convenient and physically accurate for many smooth surfaces, it becomes inadequate in situations involving rough or micro-structured boundaries, near-contact flows, or partial slip regimes, as discussed in \cite{GVHil2,GVHil3,Bucur2010,Masmoudi2010,JagerMikelic2001, NRRS}. In \cite{mmnp}, the authors established the global existence of weak solutions describing the self-propelled motion of a rigid body in a density-dependent incompressible viscous fluid under Navier-type slip conditions. To the best of our knowledge, no previous work addresses steady self-propelled rigid-body motion in exterior Navier-Stokes flows under Navier-slip boundary conditions.

In the present paper, we investigate the steady self-propelled motion of a rigid body in a viscous incompressible fluid governed by the three-dimensional Navier–Stokes equations, subject to Navier slip boundary conditions. The nonhomogeneous Navier boundary data represent the thrust generated along the body surface. In this setting, the absence of external forces requires the total hydrodynamic force and torque acting on the body to vanish, which couples the boundary-driven fluid motion to the translational and rotational velocities of the body. Our main result establishes the existence of weak steady solutions to the coupled system under the assumption of sufficiently small boundary flux and small normal component of the prescribed thrust relative to the flux carrier. The results presented here extend the Dirichlet-based self-propulsion theory to the Navier-slip setting, thereby bridging two distinct modeling frameworks for viscous locomotion. Beyond their mathematical interest, Navier-slip formulations provide a more realistic description of propulsion in microfluidic and rough-surface contexts, where partial slip effects naturally arise and cannot be neglected.

The analysis of the steady self-propelled motion of a rigid body immersed in an incompressible viscous fluid under Navier-slip boundary conditions in an exterior domain presents a number of conceptual and analytical challenges. Our work develops a complete framework to handle these difficulties, leading to new existence results and a deeper understanding of the propulsion mechanism in viscous fluids. Below, we summarize the main contributions, highlight the essential difficulties, and describe the key strategies that make the analysis possible.

The first major contribution of this paper is the introduction of a Sobolev space framework suitable for the analysis of self-propelled motion in unbounded domains. Classical Sobolev spaces are inadequate for controlling the behavior of the velocity field at infinity, particularly in the presence of nontrivial rigid-body motion. The weighted setting we introduce allows one to capture the decay properties of the fluid. This setting is essential for establishing uniform estimates and passing to the limit in the invading domain approximation. Unlike the classical no-slip case, the Navier-slip condition introduces a discontinuity in the tangential component of the velocity field at the interface between the fluid and the rigid body. This creates profound analytical difficulties, as the weak formulation now contains boundary integrals involving the tangential velocity. Properly handling these terms requires a delicate balance between the regularity of the test functions, the trace properties of the velocity, and the friction term on the boundary. We develop a consistent weak formulation that incorporates these effects and prove the necessary integration-by-parts and Korn-type inequalities adapted to this setting.

A central analytical challenge in exterior domain problems is to construct solutions by means of the invading domain technique, in which one solves the problem in a sequence of expanding bounded domains and passes to the limit as the radius tends to infinity.
A crucial component of this method in the context of our article is the derivation of a Korn-type inequality uniform with respect to the domain size, which allows control of both the translational and rotational components of the rigid body’s velocity in terms of the fluid’s deformation tensor. We prove such an inequality and, in particular, establish an estimate linking the translational and angular velocities of the rigid body to the appropriate fluid velocity norms (see Lemma \ref{ledesfnl}). This result is fundamental for obtaining uniform energy estimates and compactness in the limit process. An essential and technically subtle step in the analysis is the extension of the normal component of the self-propelling velocity from the body boundary into the fluid domain. This extension is \textit{highly nonstandard} due to the coupling between the normal and tangential components introduced by the Navier-slip law. We construct such an extension with control of its divergence and boundary behavior (see Lemma \ref{lemaext}). The smallness assumption appearing in our main theorem (Theorem \ref{Mainresult3}) originates precisely from the estimates obtained in this construction and reflects the delicate balance between boundary-driven thrust and viscous dissipation inherent to steady self-propelled regimes.

The proof of existence for the Navier–Stokes system relies on a carefully designed Galerkin approximation adapted to the Navier-slip boundary conditions. Constructing an appropriate divergence-free basis satisfying the slip condition is itself nontrivial and constitutes one of the technical achievements of this paper.
We then establish existence in bounded domains combining uniform estimates of the nonlinear term with compactness arguments. Passing to the limit in the invading domain procedure requires uniform control of the nonlinear convective term, which forms the core of the analytical difficulty and the heart of the proof.

Beyond existence theory, we address the fundamental question of when a prescribed self-propelling surface velocity actually induces motion of the rigid body. We show that the rigid body moves if and only if the projection of the self-propelling velocity onto a certain finite-dimensional space, which we call the thrust space, is non-zero.
To establish this characterization, we study several auxiliary exterior Stokes problems with Navier boundary conditions and construct an appropriate lifting operator (see \eqref{liftv*}). These auxiliary fields describe the effective interaction between the fluid and the body under surface actuation and form the analytical foundation of the thrust space concept. We first establish the relation between the self-propelling velocity and the resulting rigid-body velocity in the linear Stokes regime, where the analysis is more tractable. We then introduce and explicitly characterize the finite-dimensional basis of the thrust space under Navier-slip conditions which is a new and significant contribution of this work.
Finally, we extend these ideas to the nonlinear Navier-Stokes system, using refined cutoff techniques and uniform estimates to identify conditions under which genuine self-propulsion occurs. This allows us to rigorously bridge the gap between the Stokes and Navier-Stokes regimes (Theorem \ref{Mainresult2} and Theorem \ref{selfprop-NS}).

The paper is organized as follows. In Section \ref{sec1} we formulate the model for the steady motion of a self-propelled rigid body in a viscous incompressible fluid and recall the governing equations in the body-fixed frame. Section \ref{sec2} introduces the functional setting in weighted Sobolev spaces and states the main existence and characterization results on non-zero self-propelling condition. Section \ref{sec3} develops the analytical framework for the Navier-Stokes system: we construct a suitable divergence-free basis satisfying the Navier-slip condition, extend the normal component of the boundary data, and employ the invading domain technique together with uniform Korn-type estimates to prove the existence of weak solutions. Section \ref{sec4} concerns the analysis of propulsion by studying auxiliary exterior Stokes problems and introducing the finite-dimensional thrust space, we identify the conditions under which the rigid body acquires a nonzero velocity and relate the nonlinear Navier-Stokes system to its linear counterpart.

\section{Description of the model}\label{sec1}
Let $\mc{B}(t) \subset \mathbb{R}^3$ denote a closed, bounded and simply connected  rigid body. We assume that the rest of the space, i.e., 
$\mathbb{R}^{3} \setminus \mc{B}(t)=\Omega(t)$ is filled with a viscous incompressible nonhomogeneous fluid. 
The initial domain of the rigid body is denoted by $\mc{B}_0:=\mc{B}(0)$ and is assumed to have a smooth boundary. Correspondingly, $\Omega :=\mathbb{R}^{3} \setminus \mc{B}_0$ is  the initial fluid domain.

Our system of a rigid body moving in a fluid is in the first instance a moving domain problem in an inertial frame in which the velocity of the rigid body is described by
\begin{equation} \label{def-U_S} 
U_{\mc{S}}(y,t)= h'(t)+  r(t) \times (y-h(t)) \; \mbox{for} \quad (y,t) \in \mc{B}(t)\times (0,T). \end{equation} 
Here, $h'(t)$ is the linear velocity of the centre of mass $h(t)$ and $r(t)$ is the angular velocity of the body. 
The solid domain at time $t$ in the same inertial frame is given by
\begin{equation*} \mc{B}(t)= \left\{h(t)+ Q(t)x \mid x\in \Omega\right\}, \end{equation*} 
where $Q(t) \in SO(3)$ is associated to the rotation of the rigid body.  Mathematically, it will turn out useful to define $U_{\mc{S}}$ for all $(y,t)\in \mathbb{R}^3 \times (0,T)$. 
Let $\varrho$, $U$ and $P$ be the density, velocity and pressure of the viscous
incompressible nonhomogeneous fluid, respectively, which satisfy 
\begin{align}  \label{mass:fluid}  \frac{\partial U}{\partial t} + \nabla \cdot\left( U
\otimes U\right) + \nabla P = \nabla \cdot(2\nu \mathbb{D}(U)), \quad 
 \mathrm{div}\,U=0 \quad \mbox{ in } \Omega(t) \times (0,T).
\end{align}
 Here we consider a self-propelled motion of the body $\mc{B}(t)$,  which we describe by a vectorial flux $V_{*}$ at $\partial \Omega(t)$. In this article we consider Navier type conditions at the fluid-structure interface; we prescribe the  normal and tangential parts of the flux by: 
\begin{align*} 
V_{*}\cdot N &= (U-U_{\mc{S}})\cdot N \quad \mbox{ on } \partial \Omega(t) \times (0,T), \\
  (V_{*} \times N) &= \lambda (2\mathbb{D}(U)N)\times N +  
(U-U_{\mc{S}})\times N  \quad \mbox{ on } \partial \Omega(t)\times (0,T), 
\end{align*}
where $N$ is the unit outward normal to the boundary of $\Omega(t)$, i.e., directed towards $\mc{B}(t)$ and $\lambda$ is the slip length which has the unit of a length. The quantity $\lambda$ can be interpreted as the fictitious distance below the surface where the no-slip boundary condition would be satisfied

 For a given viscosity coefficient $\nu >0$, we set $$\Sigma (U,P) = -P\mathbb{I} + 2\nu \mathbb{D}(U) \quad \mbox{ with } \quad \mathbb{D}(U)= \left[\frac{1}{2}\left(\frac{\partial U_i}{\partial x_j} + \frac{\partial U_j}{\partial x_i}\right)\right]_{i,j=1,2,3}.$$
Let $\varrho_{\mc{B}}$ denote the density of the rigid body. Then $m = \int_{\mc{B}(t)} \varrho_{\mc{B}}(y,t) dy$ is its total mass, which is constant in time and does not change under the coordinate transformation below. Further, 
the moment of inertia $J(t)$ is defined by
$$J(t) =  \int\limits_{\mc{B}(t)} \varrho_{\mc{B}}(y,t) 
 \Big(|y-h(t)|^2\mathbb{I} - (y-h(t))\otimes (y-h(t))\Big)\, dy. $$
The ordinary differential equations modeling the dynamics of the rigid body then read 
\begin{align*} mh''&= -\int\limits_{\partial \Omega(t)} \left(\Sigma (U,P) N - U(U-U_{\mc{S}})\cdot N\right)\, dS , 
\\ 
(Jr)' &= -\int\limits_{\partial \Omega(t)} (x-h) \times \left(\Sigma (U,P) N - U(U-U_{\mc{S}})\cdot N\right)\, dS,
\end{align*} 
We suppose the velocity of the fluid satisfy:
\begin{equation}\label{13:10}
 U(y,t) \rightarrow 0 \mbox{ as }|y| \rightarrow \infty, 
\end{equation}

as well as the initial conditions  \begin{equation}\label{initialcond}  U(y,0)=u_0(y),\, h(0)=0,\, h'(0)=\xi_{0},\, r(0)=\omega_{0}. 
\end{equation}
We apply a global change of variables that transforms the system in such a way that it is formulated in a frame which is attached to the rigid body. At time $t=0$, the two frames are assumed to coincide with $h(0) =0$. Hence the transformed system is posed on the fixed domain  $\Omega \times (0,T)$ via the following change of variables: 
\begin{align*} 
v(x,t) =Q(t)^{\top}U(Q(t)x+h(t),t), \quad v_{*}(y,t) = Q(t)^{\top} V_{*}(Q(t)y+h(t),t), \quad p(y,t)=P(Q(t)y+h(t),t).
\end{align*} 
Further, the moment of inertia transforms to 
$$ J_0 = Q(t)^\top J(t) Q(t).$$
By \eqref{def-U_S}
and the extension of $U_{\mc{S}}$ to the whole space, we have that  \color{black}
\begin{equation*} 
V(x,t) = Q(t)^\top U_{\mc{S}}(Q(t)y + h(t),t)  =  \xi(t) + \omega(t) \times x, \quad x\in \Omega \times (0,T),
\end{equation*}
where $\xi(t) = Q(t)^{\top}h'(t)$ is the transformed linear velocity and $\omega(t)=Q(t)^{\top}r(t)$ the transformed angular velocity of the rigid body. The normal and tangential parts of the self-propulsion flux in the new frame are given by: 
\begin{align} 
v_{*}\cdot n &= 0 \mbox{ on } \partial \Omega \times (0,T), \label{self-propel normal}\\
  (v_{*} \times n) &= \lambda (D(u)n)\times n +  
(u-V)\times n  \mbox{ on } \partial \Omega \times (0,T) \label{self-propel tangent}
\end{align}
with $n(x,t) = Q(t)^\top N(Q(t)x + h(t))$ for $(y,t) \in \partial \Omega \times (0,T)$ denoting the inward pointing normal to $\partial \Omega$.
With all this at hand, equations \eqref{mass:fluid}--\eqref{initialcond} can be rewritten in the fixed domain as
\begin{equation}
\label{chg of var momentum:fluid}
\left\{
    \begin{aligned}
 \frac{\partial v}{\partial t} + \nabla \cdot\left[v\otimes (v-V)\right] + \omega \times v + \nabla p &= \nabla \cdot(2\nu \mathbb{D}(v)), \quad \mathrm{div}\,v=0 \quad \mbox{ in } \Omega \times (0,T),\\ (v-V)\cdot n &= v_{*} \cdot n \mbox{ on } \partial \Omega\times (0,T), \\ \lambda( 2\mathbb{D}(v)n)\times n + (v-V)\times n &= v_{*}\times n \mbox{ on } \partial \Omega\times (0,T), \\ 
m\xi' + m\omega \times \xi &= -\int\limits_{\partial \Omega} (\mathbb{T}_{\nu} (v,p) n - v(v-V)\cdot n)\, dS , \\ J_{0}\omega' + \omega \times J_{0}\omega &= -\int\limits_{\partial \Omega} x \times (\mathbb{T}_{\nu} (v,p) n-v(v-V)\cdot n)\, dS  , \\  v(x,0)=u_0(x),\, h(0)&=0,\, \xi(0)=\xi_{0},\, \omega(0)=\omega_{0},  \end{aligned}
\right.
\end{equation}
where $V(x)=\xi + \omega \times x$ and
\begin{align*}
 \mathbb{T}_{\nu} (v,p) = -p\mathbb{I} + 2\nu \mathbb{D}(v)\mbox{ with  }\mathbb{D}(v)&= \left[\frac{1}{2}\left(\frac{\partial v_i}{\partial x_j} + \frac{\partial v_j}{\partial x_i}\right)\right]_{i,j=1,2,3}. 
\end{align*}
 It follows from \eqref{13:10} that $v(x,t)$ satisfies 
$v(x,t) \rightarrow 0 \mbox{ as }|x| \rightarrow \infty.$

We can rewrite the system \eqref{chg of var momentum:fluid} in the following non-dimensional form:
\begin{equation}
\label{momentum:fluid-nondim}
\left\{
    \begin{aligned}
 \mathrm{St} \, \mathrm{Re}\frac{\partial v}{\partial t} + \mathrm{Re}\left(\nabla \cdot\left[v\otimes (v-V)\right] + \omega \times v \right)+ \nabla p &= \Delta v, \quad \mathrm{div}\,v=0 \quad \mbox{ in } \Omega \times (0,T),\\ (v-V)\cdot n &= v_{*} \cdot n \mbox{ on } \partial \Omega\times (0,T), \\  2{\mathbb D}(v)n\times n +\alpha (v-V)\times n &= \alpha v_{*}\times n \mbox{ on } \partial \Omega\times (0,T), \\ 
\mathrm{St} \mathrm{Re} m\xi' + \mathrm{Re}\ m\omega \times \xi &= -\int\limits_{\partial \Omega} (\mathbb{T} (v,p) n - \mathrm{Re}\ v(v-V)\cdot n)\, dS , \\ \mathrm{St} \mathrm{Re} J_{0}\omega' + \mathrm{Re}\ \omega \times J_{0}\omega &= -\int\limits_{\partial \Omega} x \times (\mathbb{T} (v,p) n- \mathrm{Re}\ v(v-V)\cdot n)\, dS  , \\  u(x,0)=u_0(x),\, h(0)&=0,\, \xi(0)=\xi_{0},\, \omega(0)=\omega_{0},  \end{aligned}
\right.
\end{equation}
where 
\begin{align*}
 \mathbb{T} (v,p) = -p\mathbb{I} + 2 \mathbb{D}(v)\mbox{ with  }\mathbb{D}(v)&= \left[\frac{1}{2}\left(\frac{\partial v_i}{\partial x_j} + \frac{\partial v_j}{\partial x_i}\right)\right]_{i,j=1,2,3}. 
\end{align*}
In the above system \eqref{momentum:fluid-nondim}, all the variables are non-dimensional. Here $\mathrm{Re}=\frac{WL}{\nu}$ is the Reynolds number and $\mathrm{St}=\frac{L}{\tau W}$ is the Strouhal number where $W, L$, $\tau$ are the suitable scales for the velocity, length and time. The dimenionless quantity $\alpha = \frac{\lambda}{L}$ is the friction coefficient which measures the tendency of the fluid to slip on the boundary. 

When $\mathrm{St}\rightarrow 0$ (if $\tau$ is very large), we obtain the \textit{steady version} of the  nonlinear problem \eqref{momentum:fluid-nondim}: 
\begin{equation}
\label{eqn1:NS}
\left\{
    \begin{aligned} -\displaystyle \nabla \cdot  {\mathbb T}(v,p)+\mathrm{Re}\left(\nabla \cdot\left[v\otimes (v-V)\right] + \omega \times v \right) = 0 \quad \mbox{ in } \Omega, \\
    \nabla \cdot v  = 0 \quad \mbox{ in } \Omega ,\\ (v-V)\cdot n = v_{*} \cdot n \quad \mbox{ on } \partial \Omega, \\  2[{\mathbb D}(v) n]\times n +\alpha (v-V)\times n = \alpha v_{*}\times n \quad \mbox{ on } \partial \Omega, \\ \displaystyle \lim_{\left| x\right| \rightarrow
\infty }v(x) =0, \\ 
 \mathrm{Re}\ m\omega \times \xi  = -\int\limits_{\partial \Omega} (\mathbb{T} (v,p) n - \mathrm{Re}\ v(v-V)\cdot n)\dS , \\ \mathrm{Re}\ \omega \times J_{0}\omega = -\int\limits_{\partial \Omega} x \times (\mathbb{T} (v,p) n- \mathrm{Re}\ v(v-V)\cdot n)\dS  .  \end{aligned}
\right.
\end{equation}

In the absence of external forces, the forward force (thrust) that makes $\mathcal B$ move is generated by $\mathcal B$ itself. The motion is due to the interaction of the body's external surface and the fluid via the boundary values $v_*$.

\section{Main results}\label{sec2}
In order to state the main contributions of this article, we need to introduce some notations.
\subsection{Notation}
Throughout the paper, we will  use the same font style to represent scalar, vector, and tensor-valued functions.  For any set $\mathcal{A}$, we consider  ${\mathcal D}(\mathcal{A}) = C^\infty_0(\mathcal{A})$. Standard notations $L^p(\mathcal O)$, $W^{m,p}(\mathcal O)$ and $H^m(\mathcal O):=W^{m,2}(\mathcal O)$, for any domain $\mathcal O$, with $p\in [1,\infty]$ and $m \in {\mathbb N} \cup \{0\}$,
will be adopted for Lebesgue and Sobolev spaces. For corresponding norms, we write
$\|{\cdot}\|_{p,\mathcal{O}}\coloneqq\|{\cdot}\|_{L^p(\mathcal O)}$
and $\|{\cdot}\|_{m,p,\mathcal O}\coloneqq\|{\cdot}\|_{W^{m,p}(\mathcal O)}$. Concerning boundary traces, we set usual notations $L^p(\Gamma)$, $W^{m-\frac{1}{p},p}(\Gamma)$, $H^{m-\frac{1}{2}}(\Gamma)$ with the corresponding norms $\|{\cdot}\|_{p,\Gamma}$, $\|{\cdot}\|_{m-\frac{1}{p},p,\Gamma}$ and $\|{\cdot}\|_{m-\frac{1}{2},2,\Gamma}$ for a boundary $\Gamma$ of a domain.

Let $\rho(x):=(1+|x|^2)^{1/2}$ and consider the following weighted Sobolev spaces and norms:
\begin{equation*}
H^{m,k}_{\rho}(\Omega) = \left\{u\in {\mathcal D}'(\Omega) \mid  \rho^{k-m+|\lambda|}D^{\lambda}u \in L^2(\Omega) \ \forall \ |\lambda|\leq m \right\},\quad \lambda\in (\mathbb{N}\cup \{0\})^3, \quad m \in {\mathbb N} \cup \{0\}, \quad k \in {\mathbb Z},
\end{equation*}
with the norm 
\begin{equation*}
\| u \|_{H^{m,k}_{\rho}(\Omega) } = \sum\limits_{|\lambda|\leq m}\left( \left\| \rho^{k-m+|\lambda|}D^{\lambda}u  \right\|^2_{2,\Omega}   \right)^{\frac 12}. 
\end{equation*}
In particular, we need the following spaces throughout the  paper:
\[
H^{0,-1}_{\rho}(\Omega) = \left\{u\in {\mathcal D}'(\Omega) \mid  \frac{u}{\rho} \in L^2(\Omega)\right\}, \quad \| u \|_{H^{0,-1}_{\rho}(\Omega) } = \left\| \frac{u}{\rho}  \right\|_{2,\Omega}
\]
\[
H^{1,0}_{\rho}(\Omega) = \left\{u\in {\mathcal D}'(\Omega) \mid \frac{u}{\rho} \in L^2(\Omega),\, \nabla u \in L^2(\Omega)\right\}, \quad \| u \|_{H^{1,0}_{\rho}(\Omega) } = \left( \left\| \frac{u}{\rho}  \right\|^2_{2,\Omega} + \left\| \nabla u \right\|^2_{2,\Omega} \right)^{\frac 12} 
\]
\[
H^{2,1}_{\rho}(\Omega) = \left\{u\in {\mathcal D}'(\Omega) \mid \frac{u}{\rho} \in L^2(\Omega),\, \nabla u \in L^2(\Omega), \, \rho D^2 u \in L^2(\Omega)\right\}, 
\]
\[
\| u \|_{H^{2,1}_{\rho}(\Omega) } = \left( \left\| \frac{u}{\rho}  \right\|^2_{2,\Omega} + \left\| \nabla u \right\|^2_{2,\Omega} + \left\| \rho D^2 u \right\|^2_{2,\Omega}  \right)^{\frac 12} .
\]
The Hardy inequality holds for $H^{1,0}_{\rho}(\Omega)$,
\begin{equation*}
\forall u \in H_{\rho}^{1,0}(\Omega),\quad
\| u \|_{H_{\rho}^{1,0}(\Omega)}\leq C\|\nabla u\|_{2,\Omega}.
\end{equation*}
The space ${\mathcal D}(\overline{\Omega})$ is dense in $H^{m,k}_{\rho}(\Omega)$.

Throughout the paper, we will use the Einstein summation convention and implicitly sum over all repeated indices. 
\subsection{Main results}
Our main contribution in this paper is that we are able to prove the existence of weak solutions for system \eqref{eqn1:NS} that describes the fully nonlinear steady self-propelled motion with Navier-slip boundary condition. Let us introduce the set of rigid velocity fields: 
\begin{equation*}
\mathcal{R}=\left\{ V : \mathbb{R}^3 \to \mathbb{R}^3 \mid \exists\ \xi, \omega \in \mathbb{R}^3 \mbox{ such that }V(x)=\xi+ \omega \times x\mbox{ for any } x\in\mathbb{R}^3\right\}.
\end{equation*}
\begin{theorem}\label{Mainresult3}
Let $\partial\Omega $ be of class $C^{2,1}$ and  $v_{*}\cdot n \in H^{1/2}(\partial\Omega)$, $v_{*}\times n \in H^{1/2}(\partial\Omega)^3$. Let us set 
\begin{equation*}
\sigma (x):= \frac{x}{4 \pi |x|^3}, \qquad \Phi = \int\limits_{\partial\Omega}  v_* \cdot n\ dS \not=0.
\end{equation*}
There exist $C_1 (\mc{B}_0), C_2 (\mc{B}_0)$ such that if 
\begin{equation*}
\mathrm{Re}|\Phi|< C_1, \quad \mathrm{Re}\| (v_{*}\cdot n )n  -  \Phi \sigma|_{\partial \Omega}\|_{H^{1/2}(\partial\Omega)} < C_2,
\end{equation*}
then the problem \eqref{eqn1:NS} admits at least one weak solution $(v,V)\in H^{1,0}_{\rho}(\Omega) \times \mathcal{R}$. Moreover, there exists $C=C(\Omega)>0$ such that
{
\begin{equation}\label{est:cont}
\|\mathbb{D}(v)\|_{2,\Omega} + \alpha \| [v-v_{\mathcal S}]_{\tau} \|_{2,\partial\Omega}\leq C(\Omega) (\| v_* \cdot n \|_{1,2,\Omega}  + {\mathrm{Re}} \,  \| v_* \cdot n \|_{1,2,\Omega}^2)  + \alpha \| [v_*]_{\tau} \|_{2,\partial \Omega}.
\end{equation}
}
\end{theorem}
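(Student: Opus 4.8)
The plan is to obtain the solution by the invading-domain method, after first removing the inhomogeneous boundary data through a carefully chosen lift. Since the flux $\Phi=\int_{\partial\Omega}v_*\cdot n\,dS$ is nonzero, the normal trace cannot be extended with compact support; instead we write
\[
(v_*\cdot n)\,n=\Phi\,\sigma|_{\partial\Omega}+\big[(v_*\cdot n)\,n-\Phi\,\sigma|_{\partial\Omega}\big],
\]
where $\sigma(x)=x/(4\pi|x|^3)$ is the elementary flux carrier --- divergence-free away from the origin (placed inside $\mc{B}_0$) and with unit flux through any surface enclosing $\mc{B}_0$ --- so that the bracketed field has zero flux through $\partial\Omega$. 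Using the extension result of Lemma \ref{lemaext} one builds a field $b\in H^{1,0}_{\rho}(\Omega)$, with controlled divergence and supported near $\partial\Omega$ apart from the $\Phi\sigma$ contribution, whose normal trace on $\partial\Omega$ equals $v_*\cdot n$ and which carries the tangential data consistently with \eqref{self-propel tangent}, and such that
\[
\|b\|_{H^{1,0}_{\rho}(\Omega)}+\|b\|_{3,\Omega}+\alpha\|[b]_{\tau}\|_{2,\partial\Omega}\le C(\mc{B}_0)\big(\,|\Phi|+\|(v_*\cdot n)n-\Phi\sigma|_{\partial\Omega}\|_{H^{1/2}(\partial\Omega)}\,\big)+\alpha\|[v_*]_{\tau}\|_{2,\partial\Omega}.
\]
Setting $v=u+b$, the unknown pair $(u,V)$ then solves a problem of the same structure as \eqref{eqn1:NS}, with homogeneous-flux boundary data and an additional right-hand side that is affine-plus-quadratic in $b$ through the convective term.

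Next I would discretize: for large $R$ put $\Omega_R=\Omega\cap B_R$, impose on $\partial B_R$ an artificial condition compatible with the Navier-slip weak formulation (for instance $u\cdot n=0$ and $[\mathbb D(u)n]\times n=0$), and solve the finite-dimensional Galerkin system in the divergence-free, Navier-slip-adapted basis constructed in Section \ref{sec3}. Solvability of the Galerkin system follows from a Brouwer fixed-point argument once an a priori bound is available. That bound is obtained by testing with the Galerkin approximation itself: the viscous term produces $2\|\mathbb D(u)\|_{2,\Omega_R}^2$; the Navier-slip boundary terms, after integration by parts and use of \eqref{self-propel normal}--\eqref{self-propel tangent} together with the force/torque balance in \eqref{eqn1:NS}, produce the friction contribution $\alpha\|[u-V]_{\tau}\|_{2,\partial\Omega}^2$ (this is where the rigid-body equations and the boundary integrals combine); and the skew-symmetry of $\nabla\cdot[v\otimes(v-V)]+\omega\times v$ tested against $u$ removes the leading part of the convective term, the remainder reducing --- thanks to $\mathrm{div}\,\sigma=0$ --- essentially to trace integrals over $\partial\Omega$. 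What is left is controlled by the Sobolev embedding $H^{1,0}_{\rho}(\Omega)\hookrightarrow L^6(\Omega)$, the Hardy inequality, trace estimates on $\partial\Omega$, and, crucially, the Korn-type inequality of Lemma \ref{ledesfnl}, which bounds $\|u\|_{H^{1,0}_{\rho}}+|\xi|+|\omega|$ by $\|\mathbb D(u)\|_{2,\Omega}+\alpha\|[u-V]_{\tau}\|_{2,\partial\Omega}$ uniformly in $R$. The contributions of $b$ to the trilinear form are of the form $\mathrm{Re}\,\|b\|_{3,\Omega}\|\nabla u\|_{2,\Omega}^2$ and $\mathrm{Re}\,|\Phi|\,\|u\|_{2,\partial\Omega}^2$ plus lower-order terms; the smallness hypotheses $\mathrm{Re}|\Phi|<C_1$ and $\mathrm{Re}\|(v_*\cdot n)n-\Phi\sigma|_{\partial\Omega}\|_{H^{1/2}(\partial\Omega)}<C_2$ make these absorbable into $\|\mathbb D(u)\|_{2,\Omega}^2$ and $\alpha\|[u-V]_{\tau}\|_{2,\partial\Omega}^2$, leaving a bound on $\|\mathbb D(u)\|_{2,\Omega_R}+\alpha\|[u-V]_{\tau}\|_{2,\partial\Omega}+|\xi|+|\omega|$ that is independent of $R$ and linear-plus-quadratic in the data.

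With the uniform bound in hand I would pass to the limit $R\to\infty$: extracting a subsequence, $u_R\rightharpoonup u$ in $H^{1,0}_{\rho}(\Omega)$, $u_R\to u$ strongly in $L^2_{\mathrm{loc}}(\overline\Omega)$ and in $L^2(\partial\Omega)$ by Rellich and trace compactness, and $(\xi_R,\omega_R)\to(\xi,\omega)$. Linear, viscous and boundary terms pass to the limit directly; in the convective term one uses the local strong convergence of $u_R$, the uniform control of $u_R/\rho$ in $L^2$, and the fact that the admissible test functions coincide with a rigid field near $\partial\Omega$ and have compact support, to identify the limit. This shows that $(u+b,V)$ is a weak solution of \eqref{eqn1:NS}, i.e. $v=u+b\in H^{1,0}_{\rho}(\Omega)$, $V\in\mathcal{R}$. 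Finally, estimate \eqref{est:cont} follows from the $R$-uniform a priori bound and the weak lower semicontinuity of $v\mapsto\|\mathbb D(v)\|_{2,\Omega}$ and of the tangential-trace seminorm, after undoing $v=u+b$ and bounding the $b$-contributions by $\|v_*\cdot n\|_{1,2,\Omega}$, $\mathrm{Re}\|v_*\cdot n\|_{1,2,\Omega}^2$ and $\alpha\|[v_*]_{\tau}\|_{2,\partial\Omega}$.

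The step I expect to be the main obstacle is the uniform-in-$R$ control of the convective (trilinear) term together with the verification that the constants $C_1,C_2$ genuinely close the estimate: one has to track how the slowly decaying flux carrier $\Phi\sigma$ (which is only barely in $L^2$ and whose product with the linearly growing rigid velocity $V$ fails to be integrable in the naive way) and the small-but-unbounded field $\omega\times x$ enter the weak formulation, exploiting $\mathrm{div}\,\sigma=0$ and the skew-symmetry to convert the dangerous volume terms into trace terms on $\partial\Omega$, and then absorbing everything via Lemma \ref{ledesfnl}. A secondary technical point is the compatibility of the artificial condition on $\partial B_R$ with the Navier-slip weak formulation, so that no spurious boundary contribution survives in the limit $R\to\infty$.
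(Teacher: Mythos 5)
Your proposal follows the same overall architecture as the paper's proof: split the normal trace as $(v_*\cdot n)n=\Phi\sigma|_{\partial\Omega}+\beta_*$ with $\sigma$ the elementary flux carrier, lift it via Lemma~\ref{lemaext}, set $v=u+\widetilde{[v_*]_n}$, solve by Galerkin in $\Omega_R$ using the basis of Lemma~\ref{tvp}, close the energy estimate via the $R$-uniform Korn inequality of Lemma~\ref{ledesfnl}, absorb the dangerous trilinear cross term using the smallness of $\mathrm{Re}\,|\Phi|$ and $\mathrm{Re}\,\|(v_*\cdot n)n-\Phi\sigma\|_{H^{1/2}(\partial\Omega)}$, then send $k\to\infty$ and $R\to\infty$. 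On that core you are in agreement with the paper, and your identification of the main difficulties (the slowly decaying $\Phi\sigma$, the unbounded $\omega\times x$, and the conversion of volume convective terms into boundary/integrable terms) is accurate; the paper does this via the explicit identity $(\omega\times(x-x_0))\cdot\nabla\sigma-\omega\times\sigma=0$, which you invoke implicitly through \emph{\textquotedblleft div}\,$\sigma=0$\textquotedblright{} but do not pin down.

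The one place where your route genuinely deviates and where you have not shown it closes is the artificial boundary condition on $\partial B_R$. You propose the slip-type condition $u\cdot n=0$ and $[\mathbb D(u)n]\times n=0$, whereas the paper's space $\mathcal V_R$ imposes full Dirichlet $u=0$ on $\partial B_R$. This is not a cosmetic choice. The Korn-type estimate of Lemma~\ref{ledesfnl} is proved under $u|_{\partial B_R}=0$: that vanishing is what kills the surface term $\int_{\partial B_R}n\cdot[(u_{\mathcal S}\cdot\nabla)u]\,dS$ and makes the integrals $\int_{\partial B_R}n\cdot[(v\cdot\nabla)v]\,dS$ reduce to the computable $-2|b_u|^2|B_R|$, and it is also what allows elements of $\mathcal V_R$ to be extended by zero to give the density statement of Lemma~\ref{denseh}. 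Under your artificial slip condition, the $\partial B_R$ trace of $u$ is only normal-free, so you would incur an extra boundary integral $\alpha_R\int_{\partial B_R}|[u]_\tau|^2\,dS$ in the weak form (with a friction coefficient of your choosing, which you have not specified), the curvature term $[u]_\tau\cdot(\nabla_\tau n)[u]_\tau$ on $\partial B_R$ carries a factor $O(1/R)$ that must be tracked, and you would need to re-prove the uniform-in-$R$ Korn inequality and the density result in this modified space. You flag this as a \textquotedblleft secondary technical point,\textquotedblright{} but none of it is resolved, and it is exactly where the paper's Dirichlet choice is doing quiet but essential work. A minor further divergence: your lift $b$ is asked to carry tangential data (you include $\alpha\|[b]_\tau\|_{2,\partial\Omega}$ in its bound), whereas the paper's $\widetilde{[v_*]_n}$ has trace exactly $(v_*\cdot n)n$ and the tangential datum $[v_*]_\tau$ is simply kept as the friction source term on the right-hand side of the weak form; the paper's version is cleaner and avoids any need for a tangential lift, which in fact is not constructed in the paper and would be an extra burden. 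If you switch to Dirichlet on $\partial B_R$ and drop the tangential part from the lift, your argument coincides with the paper's.
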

If $\mathrm{St}\rightarrow 0$ (if $\tau$ is very large) and $\mathrm{Re}\rightarrow 0$ (if the velocity of the rigid body $\mathcal{B}(t)$ is very small or if the viscosity of the fluid is very large), then we obtain the following \textit{steady Stokes approximation}: 
\begin{equation}
\label{eqn1tan}
\left\{
    \begin{aligned}
 \nabla \cdot  {\mathbb T}(v,p)= 0, \quad  \nabla \cdot  v = 0 
  \quad \mbox{ in }\Omega,  \\  (v  -V)\cdot n
= v_{*} \cdot n \quad  \mbox{ on }\partial\Omega, 
 \\ 2 [{\mathbb D}(v) n] \times n + \alpha (v-V) \times n 
= \alpha v_{*} \times n  \quad \mbox{ on }\partial\Omega,
 \\  \lim_{\left| x\right| \rightarrow
\infty }v(x) = 0, \\
\displaystyle \int\limits_{\partial\Omega} {\mathbb T}(v,p)n \dS  = 0, \quad  \int\limits_{\partial\Omega}  x \times {\mathbb T}(v,p)n \dS  = 0.
 \end{aligned}
\right.
\end{equation}
 We have the following wellposedness result for the system \eqref{eqn1tan}:
\begin{theorem}\label{Mainresult1} 
 Let $\partial\Omega $ be of class $C^{2,1}$ and  $v_{*}\cdot n \in H^{3/2}(\partial\Omega)$, $v_{*}\times n \in H^{1/2}(\partial\Omega)^3$. Then problem \eqref{eqn1tan} admits one and only one solution $(v,p, V)$ such that  
\begin{equation*}
v \in  H^{2,1}_{\rho}(\Omega)^3, \quad
p \in H^{1,1}_{\rho} (\Omega),\quad V:=\xi + \omega \times x\in\mathcal{R}.
\end{equation*}
 The following estimates hold  
\begin{equation*}
\|(1+|x|^2)^{-\frac{1}{2}}v\|_{2,\Omega}+\| \nabla v \|_{1, 2, \Omega} + \| p \|_{1,2,\Omega} + |\xi  |+|\omega | \leq C \left(\|v_{*}\cdot n\|_{3/2,2,\partial\Omega}+ \|v_{*}\times n\|_{1/2,2,\partial\Omega}\right).
\end{equation*}
\end{theorem}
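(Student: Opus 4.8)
\smallskip
\noindent\emph{Proof strategy.}
Since the problem is linear we can work directly in the unbounded domain $\Omega$, without any invading-domain procedure, and the whole argument reduces to a coercive variational problem for the pair $(v,V)$ in the weighted energy space, solved by Lax--Milgram, after which the pressure and the stated regularity are recovered from the classical weighted-Sobolev theory of the exterior Stokes system.

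First I would set up the weak formulation. Introduce the Hilbert space
\[
\mathcal{H}:=\bigl\{(\varphi,W)\ :\ \varphi\in H^{1,0}_{\rho}(\Omega)^3,\ \mathrm{div}\,\varphi=0,\ W\in\mathcal{R},\ (\varphi-W)\cdot n=0\text{ on }\partial\Omega\bigr\},
\]
normed by $\|\nabla\varphi\|_{2,\Omega}+|W|$ (equivalent to the $H^{1,0}_{\rho}\times\mathcal{R}$-norm by the Hardy inequality), together with a fixed divergence-free lifting $v^{\sharp}\in H^{2,1}_{\rho}(\Omega)$ of the normal flux, i.e. $v^{\sharp}\cdot n=v_{*}\cdot n$ on $\partial\Omega$, obtained by combining the flux carrier $\Phi\,\sigma$ (which belongs to $H^{2,1}_{\rho}$ away from the origin and realizes the total flux $\Phi$), an $H^{2}$-extension of the remaining zero-flux boundary datum, and a Bogovskii corrector supported in a bounded annulus. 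Multiplying the Stokes equation by a test field $\varphi$, integrating by parts, splitting the boundary integral $\int_{\partial\Omega}\mathbb{T}(v,p)n\cdot\varphi\,\dS$ into its normal and tangential parts, rewriting the tangential part through the Navier condition, and absorbing the normal part by means of the force/torque conditions in the equivalent form $\int_{\partial\Omega}\mathbb{T}(v,p)n\cdot W\,\dS=0$ for all $W\in\mathcal{R}$, one arrives at: find $v=v^{\sharp}+\bar v$ with $(\bar v,V)\in\mathcal{H}$ such that
\[
\int_{\Omega}2\mathbb{D}(v):\mathbb{D}(\varphi)\dx+\alpha\int_{\partial\Omega}[v-V]_{\tau}\cdot[\varphi-W]_{\tau}\dS=\alpha\int_{\partial\Omega}[v_{*}]_{\tau}\cdot[\varphi-W]_{\tau}\dS\qquad\forall\,(\varphi,W)\in\mathcal{H}.
\]
Conversely, testing with compactly supported divergence-free $(\varphi,0)$ produces, via de Rham, a pressure $p$ with $\nabla\cdot\mathbb{T}(v,p)=0$ in $\Omega$; testing with divergence-free $(\varphi,0)$ satisfying $\varphi\cdot n=0$ yields the tangential Navier condition; and testing with $(\varphi,W)$ for arbitrary $W\in\mathcal{R}$ returns exactly the force and torque balances. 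Hence the variational problem is equivalent to \eqref{eqn1tan} at the regularity level we establish below.

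Next I would solve the variational problem by Lax--Milgram. Continuity of the bilinear form on $\mathcal{H}$ is immediate, since the traces of $H^{1,0}_{\rho}$-functions are controlled on the bounded surface $\partial\Omega$ and $\mathcal{R}$ is finite-dimensional; the same estimates show that the right-hand side functional, together with the contribution of $v^{\sharp}$, has norm $\le C\bigl(\|v_{*}\cdot n\|_{3/2,2,\partial\Omega}+\|v_{*}\times n\|_{1/2,2,\partial\Omega}\bigr)$. Coercivity is the crucial point: the Korn inequality $\|\nabla\varphi\|_{2,\Omega}\le C\|\mathbb{D}(\varphi)\|_{2,\Omega}$ valid on $H^{1,0}_{\rho}(\Omega)$ for the exterior domain $\Omega$ controls $\|\nabla\varphi\|_{2,\Omega}^{2}$; then $\|[W]_{\tau}\|_{2,\partial\Omega}\le\|[\varphi-W]_{\tau}\|_{2,\partial\Omega}+C\|\nabla\varphi\|_{2,\Omega}$ and $|W|\le C\|[W]_{\tau}\|_{2,\partial\Omega}$ by equivalence of norms on the finite-dimensional space $\mathcal{R}$ (a nonzero rigid velocity field cannot be everywhere normal to the closed surface $\partial\Omega$), whence $a((\varphi,W),(\varphi,W))\ge c(\|\nabla\varphi\|_{2,\Omega}^{2}+|W|^{2})$. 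Lax--Milgram gives a unique $(\bar v,V)\in\mathcal{H}$, hence a unique weak solution $(v,V)$, with the corresponding energy estimate; uniqueness for \eqref{eqn1tan} follows because the difference of two solutions is a weak solution with $v_{*}=0$, forced to vanish.

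Finally, with $v\in H^{1,0}_{\rho}(\Omega)$ solving the Stokes system together with $v\cdot n=v_{*}\cdot n$ and the tangential Navier condition, and $p\in L^{2}_{\mathrm{loc}}(\Omega)$ from de Rham, I would upgrade to $v\in H^{2,1}_{\rho}(\Omega)^{3}$ and $p\in H^{1,1}_{\rho}(\Omega)$ by combining interior Stokes regularity, boundary regularity for the Stokes system with Navier-slip conditions (an elliptic boundary value problem in the Agmon--Douglis--Nirenberg / Lopatinskii--Shapiro sense, using $\partial\Omega\in C^{2,1}$, $v_{*}\cdot n\in H^{3/2}(\partial\Omega)$, $v_{*}\times n\in H^{1/2}(\partial\Omega)^{3}$), and the weighted $L^{2}$ estimates for the exterior Stokes problem that encode the decay of $v$ and $p$ at infinity and membership in the weighted classes; chaining these with the energy estimate yields the quantitative bound, while $\lim_{|x|\to\infty}v(x)=0$ and the uniqueness of $p$ are automatic from $v\in H^{1,0}_{\rho}$ and $p\in L^{2}(\Omega)$. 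The genuinely delicate ingredients are the Korn inequality in the weighted space $H^{1,0}_{\rho}(\Omega)$ underlying coercivity and the weighted $H^{2,1}_{\rho}/H^{1,1}_{\rho}$ regularity for the exterior Stokes problem under Navier-slip boundary conditions, a setting less classical than the Dirichlet one; verifying that the natural boundary conditions of the variational problem reproduce the force and torque balances at the available regularity also requires some care.
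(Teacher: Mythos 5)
Your proposal takes a genuinely different route from the paper. You propose a direct Lax--Milgram argument for $(\bar v,V)$ in the weighted energy space over the unbounded domain, recovering the pressure by de Rham and the regularity a posteriori. The paper instead first solves the auxiliary Navier-slip exterior Stokes problems \eqref{astok} and \eqref{liftv*} using the weighted-Sobolev theory of \cite{DhMeRa} (Lemma \ref{lehi}), writes $v=c_iH^{(i)}+\vartheta$, $p=c_iP^{(i)}+\pi$, and reduces the force and torque balances to the $6\times 6$ system $Mc=\beta$, with $M$ shown to be symmetric positive definite in Lemma \ref{M:invertible}; uniqueness is then an energy argument via the cut-off $\psi_R$ and Lemma \ref{legraab}. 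This finite-dimensional reduction is not merely an alternative to your variational argument: the matrix $M$ and the decomposition $v=c_iH^{(i)}+\vartheta$ are exactly what drive the propulsion analysis of Theorems \ref{Mainresult2} and \ref{selfprop-NS}, so the paper's route buys structure that your approach would have to reconstruct later. Your approach is more self-contained for raw existence, provided the technical ingredients you defer actually hold.

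There are two places where your argument, as stated, has gaps. First, the Korn inequality you invoke, $\|\nabla\varphi\|_{2,\Omega}\le C\|\mathbb D(\varphi)\|_{2,\Omega}$ ``valid on $H^{1,0}_{\rho}(\Omega)$'', is not correct in that generality: it holds on the closure of $\mathcal D(\Omega)$ (Dirichlet) or under the hypotheses of Lemma \ref{legraab}, but for fields in your space $\mathcal H$, which carry only $(\varphi-W)\cdot n=0$ on $\partial\Omega$, the boundary integral appearing after integrating by parts in $2\|\mathbb D(\varphi)\|^2=\|\nabla\varphi\|^2+\int_{\partial\Omega}n\cdot[(\varphi-W)\cdot\nabla](\varphi-W)\dS+\dots$ does not vanish and must be controlled by the slip term. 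What one actually gets is $\|\nabla\varphi\|_{2,\Omega}^2\le C\bigl(\|\mathbb D(\varphi)\|_{2,\Omega}^2+\alpha\|[\varphi-W]_\tau\|_{2,\partial\Omega}^2\bigr)$; that is precisely the content of Lemma \ref{ledesfnl}, proved on $\Omega_R$ with constants uniform in $R$. Your coercivity conclusion survives with this corrected inequality, since the slip term is present in the bilinear form, but the statement you wrote is false. Second, the parenthetical ``a nonzero rigid velocity field cannot be everywhere normal to $\partial\Omega$'' (needed for $|W|\le C\|[W]_\tau\|_{2,\partial\Omega}$) is true but genuinely requires proof; the paper sidesteps it by using the identity \eqref{Hibdry} to recover the full Dirichlet trace and then applying Lemma \ref{legraab}.

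Finally, the step you correctly flag as ``genuinely delicate''—the weighted $H^{2,1}_{\rho}/H^{1,1}_{\rho}$ regularity for the exterior Stokes system under Navier-slip, together with the construction of the divergence-free lifting $v^\sharp$ in $H^{2,1}_\rho(\Omega)$—is not covered by the classical Dirichlet-based weighted theory and is exactly what Lemma \ref{lehi} imports from \cite[Theorem 3.6]{DhMeRa}. Without invoking that result (or an equivalent), your regularity upgrade and the construction of $v^\sharp$ remain open; the Bogovskii corrector and standard local ADN boundary regularity handle the bounded-region part but not the decay at infinity, which is where the hard content lies.
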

In order to solve the linear problem \eqref{eqn1tan}, we will need a number of auxiliary classical exterior Stokes problems with Navier boundary conditions. The standard basis of ${\mathbb R}^3$ will be denoted by $\{\textsf{e}_1,\textsf{e}_2,\textsf{e}_3\}$. For each $i=1,...,6$, we define the elementary rigid motion
\begin{equation}\label{elrigid}
\widetilde{\textsf{e}_i}=\left\{ 
\begin{array}{lcl}
\textsf{e}_i & \mbox{if} & i=1,2,3, \medskip \\ 
\textsf{e}_{i-3}\times x & \mbox{if} & i=4,5,6,
\end{array}
\right. 
\end{equation} 
and consider $(H^{(i)},P^{(i)})$, $i=1,2,\cdots 6$, as the solutions of the auxiliary Stokes problems:   
\begin{equation}
\label{astok}
\left\{
    \begin{aligned}
\nabla \cdot  {\mathbb T}(H^{(i)},P^{(i)})=0, \quad 
\nabla \cdot  H^{(i)}=0 
\quad  \mbox{ in } \Omega,  \\ 
\displaystyle H^{(i)} \cdot n
=  \widetilde{\textsf{e}_i} \cdot n \quad \mbox{ on }\partial\Omega, \\ \displaystyle 2 [{\mathbb D}(H^{(i)}) n] \times n + \alpha H^{(i)} \times n 
= \alpha \widetilde{\textsf{e}_i} \times n \quad \mbox{ on }\partial\Omega,  \\ 
\lim_{|x|\rightarrow \infty }H^{(i)}(x)=0.
\end{aligned}
\right.
\end{equation}
Similarly to the linear problem studied in \cite{Galdi1999,GaRev}, we will see in Section \ref{Aux:rigid} that solving the Stokes approximation \eqref{eqn1tan} is equivalent to solving a system of linear equations involving the matrix $M \in {\mathbb R}^{6\times 6}$ which is defined as
\begin{equation}
M_{ij} = \int\limits_{\partial\Omega} \widetilde{\textsf{e}_i}  \cdot  {\mathbb T}(H^{(j)},P^{(j)})  n\dS, \quad i,j=1,...,6.
\label{mat}
\end{equation}
We can rewrite $M$ in the form
$$
M = \bigg[ \begin{array}{cc}K & S^\top \\ S & R  \end{array}\bigg],
$$
where $K,R,S \in {\mathbb R}^{3\times 3}$ are given by

\begin{empheq}[right=\empheqrbrace]{align}
K_{ij} &= M_{ij},\quad  i,j=1,2,3 \notag\\
R_{(i-3)(j-3)} &= M_{ij},\quad  i,j=4,5,6 \label{def:KRS} \\
S_{(i-3)j} &= M_{ij},\quad  i=4,5,6, \, j=1,2,3. \notag
\end{empheq}

Another problem we are interested in  is the following: in which way one should prescribe boundary velocities that can propel the rigid body with a \emph{nonzero} velocity $V$? Our aim is to establish that the rigid body moves if and only if ${\mathbb P}(v_{*})\not= 0$, where ${\mathbb P}$ is the projection operator on the \emph{thrust space} ${\mathcal{T}}({\mathcal B}_0)$  defined in \eqref{thrust}.
\begin{theorem}\label{Mainresult2}
Let $\partial\Omega$ be of class $C^{2,1}$. Then for any $v_{*}$ satisfying $v_{*}\cdot n \in H^{3/2}(\partial\Omega)$, $v_{*}\times n \in H^{1/2}(\partial\Omega)^3$ and $\mathbb{P}(v_{*})\neq 0$, there exists a unique solution $(v,p, V)$ to the problem \eqref{eqn1tan} such that $V \in \mathcal{R} \setminus 0$. Conversely, for any $V:=\xi + \omega \times x \in \mathcal{R} \setminus 0$, there exists one and only one solution $(v,p)$ to the problem \eqref{eqn1tan} such that $v_{*}$ belongs to ${\mathcal{T}}({\mathcal{B}})$. Moreover, the translational velocity $\xi$ of the rigid body is nonzero iff 
\begin{equation*}
(W_1,W_2,W_3)\neq S^{\top}R^{-1}(W_4,W_5,W_6),
\end{equation*}
and its angular velocity $\omega$ is nonzero iff
\begin{equation*}
(W_4,W_5,W_6)\neq S K^{-1}(W_1,W_2,W_3).
\end{equation*}
Here $K,S,R$ are defined in \eqref{def:KRS} and 
\begin{equation*}
W_i = - \int\limits_{\partial\Omega} v_{*}\cdot {\mathbb T}(H^{(i)},P^{(i)})  n\dS,\quad i=1,2,\cdots 6. 
\end{equation*}
\end{theorem}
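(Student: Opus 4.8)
The plan is to reduce the force/torque-free Stokes problem \eqref{eqn1tan} to a $6\times6$ linear system governed by the matrix $M$ of \eqref{mat}, and to read off all assertions from its block structure \eqref{def:KRS}. First I would invoke solvability of the exterior Stokes problem with Navier conditions (the linear core of Theorem \ref{Mainresult1}; cf.\ the lifting \eqref{liftv*}) to obtain the unique $(\tilde v,\tilde p)$ with $\nabla\cdot\mathbb{T}(\tilde v,\tilde p)=0$, $\nabla\cdot\tilde v=0$ in $\Omega$, $\tilde v\cdot n=v_*\cdot n$ and $2[\mathbb{D}(\tilde v)n]\times n+\alpha\,\tilde v\times n=\alpha\,v_*\times n$ on $\partial\Omega$, $\tilde v\to0$. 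For $a=(\xi,\omega)\in\mathbb{R}^6$ set $V=\sum_{i=1}^6 a_i\widetilde{\textsf{e}_i}$ and $(v,p)=\big(\tilde v+\sum_i a_iH^{(i)},\,\tilde p+\sum_i a_iP^{(i)}\big)$ with $(H^{(i)},P^{(i)})$ from \eqref{astok}; a direct check using \eqref{elrigid} and \eqref{astok} shows $(v,p,V)$ satisfies every line of \eqref{eqn1tan} for \emph{arbitrary} $a$, except the vanishing of the net force and torque. Testing those two balances with $\textsf{e}_j$ resp.\ $\textsf{e}_{j-3}$, using $\textsf{e}_{j-3}\cdot(x\times w)=\widetilde{\textsf{e}_j}\cdot w$ and \eqref{mat}, they collapse to $Ma=\beta$ with $\beta_j=-\int_{\partial\Omega}\widetilde{\textsf{e}_j}\cdot\mathbb{T}(\tilde v,\tilde p)n\,dS$.

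The decisive step is to show $\beta=W$. I would apply the Stokes reciprocal relation to the pairs $(\tilde v,\tilde p)$ and $(H^{(j)},P^{(j)})$ — the contribution over large spheres vanishing by the decay of the exterior Stokes solutions involved — to get $\int_{\partial\Omega}\tilde v\cdot\mathbb{T}(H^{(j)})n\,dS=\int_{\partial\Omega}H^{(j)}\cdot\mathbb{T}(\tilde v)n\,dS$. Decomposing each integrand into normal and tangential parts on $\partial\Omega$ and inserting the Navier identities $\tilde v\cdot n=v_*\cdot n$, $H^{(j)}\cdot n=\widetilde{\textsf{e}_j}\cdot n$, $[\mathbb{T}(\tilde v)n]_\tau=\alpha[v_*-\tilde v]_\tau$, $[\mathbb{T}(H^{(j)})n]_\tau=\alpha[\widetilde{\textsf{e}_j}-H^{(j)}]_\tau$, the difference $\int_{\partial\Omega}v_*\cdot\mathbb{T}(H^{(j)})n\,dS-\int_{\partial\Omega}\widetilde{\textsf{e}_j}\cdot\mathbb{T}(\tilde v)n\,dS$ collapses, after using the reciprocal relation, to $\alpha\int_{\partial\Omega}\big([v_*-\tilde v]_\tau\cdot[\widetilde{\textsf{e}_j}-H^{(j)}]_\tau-[\widetilde{\textsf{e}_j}-H^{(j)}]_\tau\cdot[v_*-\tilde v]_\tau\big)\,dS=0$: the slip-defect terms pair symmetrically and cancel. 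Hence $\beta_j=-\int_{\partial\Omega}\widetilde{\textsf{e}_j}\cdot\mathbb{T}(\tilde v)n\,dS=-\int_{\partial\Omega}v_*\cdot\mathbb{T}(H^{(j)})n\,dS=W_j$, so $Ma=W$.

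Next I would prove $M$ symmetric positive definite. Symmetry is the same computation with $(\tilde v,v_*)$ replaced by $(H^{(i)},\widetilde{\textsf{e}_i})$. For positivity let $H=\sum_i a_iH^{(i)}$, which solves an \eqref{astok}-type problem with boundary rigid field $\widetilde a=\sum_i a_i\widetilde{\textsf{e}_i}$; combining the Stokes energy identity $2\|\mathbb{D}(H)\|_{2,\Omega}^2=\int_{\partial\Omega}H\cdot\mathbb{T}(H)n\,dS$ with the same normal/tangential decomposition gives
\begin{equation*}
a^\top Ma=\int_{\partial\Omega}\widetilde a\cdot\mathbb{T}(H)n\,dS=2\|\mathbb{D}(H)\|_{2,\Omega}^2+\alpha\big\|[\widetilde a-H]_\tau\big\|_{2,\partial\Omega}^2\ \ge\ 0 .
\end{equation*}
If the left side vanishes, then $\mathbb{D}(H)=0$, so $H$ is a rigid field with $H\to0$ at infinity, whence $H\equiv0$; the Navier conditions then force $\widetilde a\cdot n=0$ and $\widetilde a\times n=0$ on $\partial\Omega$, so the rigid field $\widetilde a$ vanishes on $\partial\mathcal{B}_0$, hence $a=0$. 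Thus $M$, the blocks $K,R$ and the Schur complements $K-S^\top R^{-1}S$, $R-SK^{-1}S^\top$ are all invertible; solving $a=M^{-1}W$ in block form,
\begin{align*}
\xi&=(K-S^\top R^{-1}S)^{-1}\big((W_1,W_2,W_3)-S^\top R^{-1}(W_4,W_5,W_6)\big),\\
\omega&=(R-SK^{-1}S^\top)^{-1}\big((W_4,W_5,W_6)-SK^{-1}(W_1,W_2,W_3)\big),
\end{align*}
which yields the stated criteria for $\xi\neq0$ and $\omega\neq0$, and $V=\widetilde a\neq0\iff a\neq0\iff W\neq0$.

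It remains to recast ``$W\neq0$'' as ``$\mathbb{P}(v_*)\neq0$'' and to handle the converse. The thrust space $\mathcal{T}(\mathcal{B}_0)$ of \eqref{thrust} and the projector $\mathbb{P}$ are built precisely so that $\ker\mathbb{P}=\{v_*:\,W(v_*)=0\}$ and $v_*\mapsto W$ restricts to an isomorphism of $\mathcal{T}(\mathcal{B}_0)$ onto $\mathbb{R}^6$; injectivity here uses the linear independence on $\partial\Omega$ of the traction fields $\{\mathbb{T}(H^{(j)},P^{(j)})n\}_{j=1}^6$, which follows from the same energy/rigid-vanishing argument (if $\sum_j c_j\mathbb{T}(H^{(j)})n=0$ on $\partial\Omega$, then $H=\sum_j c_jH^{(j)}$ has vanishing Dirichlet energy and traction, so $H\equiv0$, then $\widetilde c\equiv0$). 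Hence $\mathbb{P}(v_*)\neq0\iff W\neq0\iff V\neq0$, and uniqueness of $(v,p,V)$ comes from Theorem \ref{Mainresult1}; this settles the first half. For the converse, given $V=\xi+\omega\times x\in\mathcal{R}\setminus0$ I would set $a=(\xi,\omega)$, $W=Ma\neq0$, let $v_*\in\mathcal{T}(\mathcal{B}_0)$ be the unique preimage of $W$, and take $(v,p)$ solving \eqref{eqn1tan} with data $v_*$ (Theorem \ref{Mainresult1}); by construction its rigid velocity equals $\widetilde{M^{-1}W}=\widetilde a=V$, and it is unique. The main obstacle is the reciprocal identity $\beta=W$: in the classical no-slip case the traces of $\tilde v$ and $H^{(j)}$ coincide with the prescribed data and the identity is immediate, whereas under Navier slip a genuine tangential slip defect arises on $\partial\Omega$ and one must verify it enters symmetrically so as to cancel; the companion positivity of $M$, requiring recovery of $\widetilde a\equiv0$ from both its normal and its tangential boundary trace rather than from a full Dirichlet trace, is the second delicate point.
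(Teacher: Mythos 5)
Your proposal is correct and follows essentially the same route as the paper's proof, modulo organization: the paper derives the linear system $M(\xi,\omega)=W$ by cross-testing \eqref{eqn1tan}$_1$ with $\psi_R H^{(i)}$ and \eqref{astok}$_1$ with $\psi_R v$ and comparing (relations \eqref{rel1}--\eqref{rel4}), whereas you build the solution explicitly from the ansatz $v=\vartheta+\sum_i a_iH^{(i)}$, obtain $Ma=\beta$ from the force/torque balance, and then prove $\beta=W$ as a separate Betti-type identity with symmetric cancellation of the slip-defect terms. Both reduce to the same reciprocity computation; your version is closer in form to the paper's proof of Theorem~\ref{Mainresult1} and has the virtue of isolating $\beta=W$ as a statement (the paper implicitly uses $\beta=\mathbb{P}(v_*)$ in its Theorem~\ref{Mainresult1} proof without spelling it out). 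Your positivity argument for $M$, $a^\top Ma = 2\|\mathbb{D}(H)\|_{2,\Omega}^2+\alpha\|[\widetilde a - H]_\tau\|_{2,\partial\Omega}^2$ with the rigid-vanishing step, coincides with the paper's Lemma~\ref{M:invertible} (equation \eqref{mijs}), and your linear-independence argument for $\{g^{(j)}\}$ matches Lemma~\ref{lemali}. One small point: for the converse you feed $v_*\in\mathcal{T}(\mathcal{B}_0)$ back into Theorem~\ref{Mainresult1}, which nominally requires $v_*\cdot n\in H^{3/2}(\partial\Omega)$, while a priori $g^{(j)}=\mathbb{T}(H^{(j)},P^{(j)})n$ has only $H^{1/2}$ trace regularity under the stated $C^{2,1}$ hypothesis; the paper's proof does not address the converse at all, so this is a regularity issue inherited from the statement rather than a defect of your argument, but it deserves a remark if you write this up in full.
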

In the case of the motion of a body in the Navier-Stokes liquid, we prove that  self-propelled motion occurs, i.e, $V \neq 0$, whenever there is a nonzero orthogonal projection of $v_{*}$ on the \emph{thrust space} ${\mathcal{T}}({\mathcal{B}})$ and $\mathrm{Re}$ is not ``too large''. More precisely, we have

\begin{theorem}\label{selfprop-NS}
Let $\partial\Omega $ be of class $C^{2,1}$ and  $v_{*}\cdot n \in H^{1/2}(\partial\Omega)$, $v_{*}\times n \in H^{1/2}(\partial\Omega)^3$. Let us set 
\begin{equation*}
\sigma (x):= \frac{x}{4 \pi |x|^3}, \qquad \Phi = \int\limits_{\partial\Omega}  v_* \cdot n\ dS \not=0.
\end{equation*}
Assume that there exist $C_1 (\mc{B}_0), C_2 (\mc{B}_0)$ such that 
\begin{equation*}
\mathrm{Re}|\Phi|< C_1, \quad \mathrm{Re}\| (v_{*}\cdot n )n  -  \Phi \sigma|_{\partial \Omega}\|_{H^{1/2}(\partial\Omega)} < C_2.
\end{equation*}
Suppose $\mathbb{P}(v_{*})\neq 0$, where ${\mathbb P}$ is the projection operator on the \emph{thrust space} ${\mathcal{T}}({\mathcal{B}_0}
)$ defined in \eqref{thrust}. Then there exists $C=C(v_*)>0$ such that if $\mathrm{Re} < C$, then the problem \eqref{eqn1:NS} admits at least one weak solution $(v,V)\in H^{1,0}_{\rho}(\Omega) \times \mathcal{R}$ with $V :=\xi+\omega\times x \neq 0$. In particular, if $(W_1,W_2,W_3)\neq S^{\top}R^{-1}(W_4,W_5,W_6)$, then 
\begin{equation*}
\frac{1}{2}|A\left((W_1,W_2,W_3) - S^{\top}R^{-1}(W_4,W_5,W_6)\right)| \leq |\xi| \leq \frac{3}{2}|A\left((W_1,W_2,W_3) - S^{\top}R^{-1}(W_4,W_5,W_6)\right)|,
\end{equation*}
while, if $(W_4,W_5,W_6)\neq S K^{-1}(W_1,W_2,W_3)$, then 
\begin{equation*}
\frac{1}{2}|B\left((W_4,W_5,W_6) - S K^{-1}(W_1,W_2,W_3)\right)| \leq |\omega| \leq \frac{3}{2}|B\left((W_4,W_5,W_6) - S K^{-1}(W_1,W_2,W_3)\right)|.
\end{equation*}
Here {$A=(K-S^{\top}R^{-1}S)^{-1}$, $B=(R-SK^{-1}S^{\top})^{-1}$}, $ K,S,R$ are defined in \eqref{def:KRS},  and 
\begin{equation*}
W_i = - \int\limits_{\partial\Omega} v_{*}\cdot {\mathbb T}(H^{(i)},P^{(i)})  n\dS,\quad i=1,2,\cdots 6. 
\end{equation*}
\end{theorem}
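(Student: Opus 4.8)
The plan is to obtain Theorem \ref{selfprop-NS} by combining the existence result of Theorem \ref{Mainresult3} with a perturbation argument off the linear Stokes theory of Theorem \ref{Mainresult2}. Under the smallness hypotheses $\mathrm{Re}|\Phi|<C_1$ and $\mathrm{Re}\|(v_*\cdot n)n-\Phi\sigma|_{\partial\Omega}\|_{H^{1/2}(\partial\Omega)}<C_2$, Theorem \ref{Mainresult3} already furnishes a weak solution $(v,V)\in H^{1,0}_\rho(\Omega)\times\mathcal R$ together with the quantitative bound \eqref{est:cont}. The only thing that remains is to show that, upon further decreasing $\mathrm{Re}$, the rigid velocity $V=\xi+\omega\times x$ is forced to be nonzero whenever $\mathbb P(v_*)\neq 0$, and to pin down the size of $\xi$ and $\omega$ in terms of the thrust functionals $W_i$.

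First I would write the Newton--Euler balances in \eqref{eqn1:NS} against the auxiliary Stokes fields $H^{(i)}$. Testing the momentum equation for $v$ by $H^{(i)}$ and integrating by parts (using the Navier-slip boundary conditions satisfied by both $v$ and $H^{(i)}$, the divergence-free constraint, and the vanishing of the total force and torque on $\partial\Omega$), one obtains an identity of the schematic form
\begin{equation*}
M \begin{pmatrix}\xi\\ \omega\end{pmatrix} = \begin{pmatrix}W_1,\dots,W_6\end{pmatrix}^\top + \mathrm{Re}\, \mathcal N(v,V),
\end{equation*}
where $M$ is the invertible matrix \eqref{mat}--\eqref{def:KRS} from the linear theory and $\mathcal N(v,V)$ collects the convective contributions $\int_\Omega (v\otimes(v-V)):\nabla H^{(i)}$ together with the Coriolis-type term $\int_\Omega(\omega\times v)\cdot H^{(i)}$ and the boundary term $\int_{\partial\Omega} v(v-V)\cdot n\cdot H^{(i)}$. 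The linear algebra behind $M=\big[\begin{smallmatrix}K & S^\top\\ S & R\end{smallmatrix}\big]$ — Schur complements $A=(K-S^\top R^{-1}S)^{-1}$, $B=(R-SK^{-1}S^\top)^{-1}$ — then expresses $\xi$ and $\omega$ separately:
\begin{equation*}
\xi = A\big((W_1,W_2,W_3)-S^\top R^{-1}(W_4,W_5,W_6)\big) + \mathrm{Re}\,(\text{correction}),
\end{equation*}
and symmetrically for $\omega$, exactly as in the Stokes characterization of Theorem \ref{Mainresult2}.

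The core estimate is then to control the correction term $\mathrm{Re}\,\mathcal N(v,V)$ by a constant times $\mathrm{Re}$ times a fixed quantity depending only on $v_*$ and $\Omega$. Here I would use \eqref{est:cont} to bound $\|\mathbb D(v)\|_{2,\Omega}$ and $\alpha\|[v-v_{\mathcal S}]_\tau\|_{2,\partial\Omega}$, the uniform Korn-type inequality of Lemma \ref{ledesfnl} to convert these into a bound on $|\xi|+|\omega|$ and on $\|v\|_{H^{1,0}_\rho(\Omega)}$, and the continuous embedding $H^{1,0}_\rho(\Omega)\hookrightarrow L^6(\Omega)$ (with $\nabla H^{(i)}\in L^{3/2}$ or $L^2$ from the weighted regularity of the auxiliary problems) to estimate the trilinear convective term by $C\|\nabla v\|_{2,\Omega}(\|\nabla v\|_{2,\Omega}+|\xi|+|\omega|)$; the boundary term is handled with the trace theorem in $H^{1,0}_\rho$ and the regularity of $H^{(i)}$ on $\partial\Omega$. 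All of this yields $|\mathcal N(v,V)|\le C(v_*,\Omega)$ uniformly for $\mathrm{Re}$ below a fixed threshold. Consequently $\xi$ differs from its Stokes value $A((W_1,W_2,W_3)-S^\top R^{-1}(W_4,W_5,W_6))$ by at most $C\,\mathrm{Re}$; choosing $\mathrm{Re}<C(v_*)$ small enough that this perturbation is at most half the Stokes value gives the two-sided bounds $\tfrac12|A(\cdots)|\le|\xi|\le\tfrac32|A(\cdots)|$, and likewise for $\omega$. The hypothesis $\mathbb P(v_*)\neq 0$ guarantees (via the thrust-space characterization underlying Theorem \ref{Mainresult2}) that at least one of the two Stokes values $A((W_1,W_2,W_3)-S^\top R^{-1}(W_4,W_5,W_6))$, $B((W_4,W_5,W_6)-SK^{-1}(W_1,W_2,W_3))$ is nonzero, hence $V\neq 0$.

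The main obstacle is the uniformity of the estimate on the convective functional $\mathcal N(v,V)$: the bound \eqref{est:cont} controls only $\|\mathbb D(v)\|_{2,\Omega}$ and the tangential boundary defect, not $\|v\|_{L^6}$ or $|\xi|+|\omega|$ directly, so one must genuinely invoke the exterior-domain Korn-type inequality of Lemma \ref{ledesfnl} and the Hardy/embedding inequalities for $H^{1,0}_\rho(\Omega)$ to close the loop — and one must verify that the constants there do not degenerate. A secondary subtlety is that $v$ is only a weak solution, so the testing-by-$H^{(i)}$ identity must be justified by density (the auxiliary fields $H^{(i)}$ are admissible test functions in the weak formulation after checking their decay and boundary behavior, which follows from their $H^{2,1}_\rho$-regularity and the Navier-slip conditions they satisfy). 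Once these two points are settled, the linear-algebra manipulation and the final smallness choice of $\mathrm{Re}$ are routine.
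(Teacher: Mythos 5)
Your strategy — perturb off the linear Stokes matrix $M$ and show the convective correction is $O(\mathrm{Re})$ — has the right shape, but the specific route you propose contains a genuine gap that the paper's proof is designed to avoid.

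You propose to test the Navier–Stokes weak formulation \eqref{weakform} directly with $\varphi = H^{(i)}$ and collect the nonlinearity into a remainder $\mathrm{Re}\,\mathcal N(v,V)$. The problem is that $H^{(i)}$ is not compactly supported, and the convective term
\[
\mathrm{Re}\int_{\Omega}\bigl[\,(v-V)\cdot\nabla H^{(i)}\,\bigr]\cdot v\,\dx
\]
involves the rigid field $V=\xi+\omega\times x$, which grows linearly at infinity. The available summability for $H^{(i)}$ is $H^{2,1}_\rho$: this gives $H^{(i)}\in L^6(\Omega)$, $\nabla H^{(i)}\in L^2(\Omega)$ and $\rho\,D^2 H^{(i)}\in L^2(\Omega)$, but it does \emph{not} give enough decay on $\nabla H^{(i)}$ (or on $H^{(i)}$ itself, for the Coriolis term $\int_\Omega(\omega\times v)\cdot H^{(i)}$) to make the integral absolutely convergent once the factor $|V|\sim |x|$ is included. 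So the identity you want to start from — $M(\xi,\omega)^\top = W + \mathrm{Re}\,\mathcal N(v,V)$ — is not justified, and the density argument you invoke does not repair this: the weak formulation is only stated for $\varphi\in\mathcal C(\Omega)$ precisely because the trilinear form is not continuous on the natural energy space $\mathcal V$ against arbitrary decaying test fields.

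The paper sidesteps this entirely by never testing the Navier–Stokes equation with $H^{(i)}$. Instead it fixes the Stokes solution $(v_0,V_0)$ of \eqref{eqn1tan}, tests the \emph{Stokes} equation $\nabla\cdot\mathbb T(H^{(i)},P^{(i)})=0$ against $\psi_R(v-v_0)$, and lets $R\to\infty$. The crucial point is that $v-v_0$ decays (both $v\in H^{1,0}_\rho$ and $v_0\in H^{2,1}_\rho$), so all integrals converge and the rigid growth of $V$ and $V_0$ never enters an unbounded integral: only the \emph{difference} $V-V_0$ appears, and only through the boundary integral $\int_{\partial\Omega}g^{(i)}\cdot(V-V_0)\dS$. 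This yields $M(\widetilde\xi,\widetilde\omega)^\top = (\mathcal F_1,\dots,\mathcal F_6)^\top$ with $\mathcal F_i$ the $\mathcal V$-inner product of $H^{(i)}$ and $v-v_0$, and the nonlinearity is quantified by the convergence statement ``$\forall\eta>0\ \exists C>0:\ \mathrm{Re}<C\implies |\mathcal F|<\eta$'' (cited from Galdi). Your attempt to bound $\mathcal N(v,V)$ uniformly from \eqref{est:cont} and Lemma \ref{ledesfnl} would be the right move \emph{if} $\mathcal N$ were well-defined, but without the comparison with $v_0$ you never obtain a decaying field to test against, and the estimate has nothing to hang on. If you want to salvage your structure, the fix is exactly the paper's: introduce $(v_0,V_0)$, transfer the pairing to the Stokes operator, and estimate $v-v_0$ rather than $v$.
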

\begin{remark}
In exterior domains, it is not necessary to require the null flux condition $\int_{\partial\Omega} v_* \cdot n \dS=0$ for the Dirichlet boundary values. In the case of Navier boundary conditions, the same happens for the boundary values $v_*$. However, when $\Omega$ is a bounded domain, appropriate compatibility conditions for the data have to be imposed.

\end{remark}

\begin{remark}\label{BCv}
{A vector field $v=v(x)$ defined on $\partial\Omega$ can be written as
$$
v = (v \cdot n) n  + v_{\tau}, \quad\mathrm{where}\quad v_{\tau}:=   n \times (v \times n).
$$}
 We use the boundary conditions 
\begin{equation*}
 (v  -V)\cdot n
= v_{*} \cdot n \mbox{ on }\partial\Omega\quad \mbox{ and }\quad
 2 [{\mathbb D}(v) n] \times n + \alpha (v-V) \times n 
= \alpha v_{*} \times n \mbox{ on }\partial\Omega
\end{equation*}
to obtain
\begin{equation*}
 v  -V =  [(v  -V)\cdot n ] n + n \times [ (v-V) \times n )] =  v_{*}  - \frac{2}{\alpha} n \times ( [{\mathbb D}(v) n] \times n).
\end{equation*}
\end{remark}

\begin{remark}  We have the following identity regarding the scalar product of the tangential parts of two vectors:
$$
a_\tau \cdot b_\tau = [n \times (a \times n) ] \cdot [n \times (b \times n) ] =
$$
$$
= [a -  (a \cdot n) n ] \cdot [n \times (b \times n) ] = a \cdot [n \times (b \times n) ] = (a \times n ) \cdot  (b \times n). 
$$
\label{remtau}
\end{remark}

\section{Steady self-propelled motion in a Navier-Stokes fluid}\label{sec3}


Let us define the space of test functions
\[
{{\mathcal C} = {\mathcal C}(\Omega) }= \left\{ \varphi \in {\mathcal D}(\overline{\Omega})^3 \mid \nabla \cdot \varphi  = 0 \text{ in } \Omega, \, \exists \ \varphi_{\mathcal S}\in \mathcal{R} \mbox{ such that } \varphi \cdot n = \varphi_{\mathcal S} \cdot n \text{ on } \partial \Omega  \right\},
\]
where $\varphi_{\mathcal S} (x) = a_\varphi +  b_\varphi \times x$.
\begin{definition}
  We say that $(v,V) \in H^{1,0}_{\rho}(\Omega)^3 \times {\mathcal R}$ is a weak solution of system \eqref{eqn1:NS} if 
\begin{itemize}
\item $V(x)=\xi + \omega \times x$, 
\item $v$ is (weakly) divergence free in $\Omega$,
\item for {all $\varphi \in {\mathcal C}$},  
\begin{equation}\label{weakform}
\begin{aligned}
&2\int_{\Omega} {\mathbb D}({v}): {\mathbb D}({\varphi})\dx + \alpha \int_{\partial \Omega} [v-V]_{\tau}  \cdot [\varphi-\varphi_{\mathcal S}]_{\tau}\dS \\
=  & \,  {\mathrm{Re}} \int_{\Omega}[((v-V) \cdot \nabla ) \varphi ]\cdot v\dx - {\mathrm{Re}} \int_{\Omega} ( \omega \times v ) \cdot \varphi\dx 
 + {\mathrm{Re}} \, m (\xi\times\omega) \cdot a_\varphi  + {\mathrm{Re}}\, (J_0\omega\times\omega) \cdot b_\varphi  \\ 
 & - {\mathrm{Re}} \int_{\partial \Omega} (v_{*}\cdot n)[v]_{\tau}  \cdot [\varphi-\varphi_{\mathcal S}]_{\tau}\dS 
 +  \alpha \int_{\partial \Omega} [v_*]_{\tau}  \cdot [\varphi-\varphi_{\mathcal S}]_{\tau}\dS, 
 \end{aligned}
 \end{equation}
\item $(v-V) \cdot n = v_* \cdot n $ on $\partial \Omega$ (in the sense of trace).
\end{itemize}
\end{definition}
 Let us define
\begin{equation*} {\mathcal V} = {\mathcal V}(\Omega)= \{ u \in H^{1,0}_{\rho}(\Omega)^3  \mid   \nabla \cdot  u =0 \textrm{ in }\Omega, \exists  \, u_{\mathcal S}  \in {\mathcal R} \mbox{ such that } (u - u_{\mathcal S})\cdot n_{|\partial \Omega}= 0 \}
\end{equation*}
with corresponding scalar product 
\begin{equation*}
(u,v) \longmapsto 2 \int_{\Omega} {\mathbb D}(u): {\mathbb D}(v) \dx + \alpha \int_{\partial \Omega} [u-u_{\mathcal S}]_{\tau}  \cdot [v-v_{\mathcal S}]_{\tau} \dS.
\end{equation*}
Let
$B_R=\{ x \in {\mathbb R}^3 : |x| <R \}$ be the Euclidean ball of radius $R >0$,  along with the exterior domain $B^R=\{ x \in {\mathbb R}^3 : |x| > R \}$, and the spherical shell $B_{R_1,R_2} =\{ x \in {\mathbb R}^3 : R_1 < |x| < R_2 \}$. As the proof of the existence result (Theorem \ref{Mainresult3}) is based on the invading domains technique, we also define some function spaces in the bounded domains $\Omega_R=\Omega\cap B_R.$ We also set the notation $\Omega^R=\Omega\cap B^R$. {For $R >  \delta({\mathcal B}_0)$, we set (we use $n$ to denote the outward unit normal on $\partial \Omega$ and on $\partial B_R$)}
\begin{equation*}
{\mathcal{H}}_R = \{ u \in L^2(\Omega_R)^3 \mid \nabla \cdot  u =0 \textrm{ in }\Omega_R,\  \exists\ u_{\mathcal S} \in {\mathcal R} \mbox{ such that } (u - u_{\mathcal S})\cdot n_{|\partial\Omega}= 0 \text{ and } u\cdot n_{|\partial B_R}= 0 \}
\end{equation*}
with scalar product 
$$
(u,v) \longmapsto \ \int_{\Omega_R} u \cdot v \dx + m u_1 \cdot v_1 + u_2 \cdot J_0 v_2 . 
$$ 
We define
\begin{equation*}
{\mathcal V}_R=  \{ u \in H^{1}(\Omega_R)^3 \mid \nabla \cdot  u =0 \textrm{ in }\Omega_R,  \exists \ u_{\mathcal S}  \in {\mathcal R} \mbox{ such that } (u - u_{\mathcal S})\cdot n_{|\partial\Omega}=0 \text{ and } u_{|\partial B_R}= 0  \}
\end{equation*}
with scalar product 
\begin{equation*}
(u,v) \longmapsto 2 \int_{\Omega_R} {\mathbb D}(u):{\mathbb D}(v) \dx+ \alpha \int_{\partial \Omega} [u-u_{\mathcal S}]_{\tau}  \cdot [v-v_{\mathcal S}]_{\tau} \dS, 
\end{equation*}   
and induced norm
\begin{equation}
\label{nvR}
\|u\|_{\mc{V}_R}= \left( 2\| {\mathbb D}(u)\|^2_{2,\Omega_R} + \alpha \| [u-u_{\mathcal S}]_{\tau} \|^2_{2,\partial \Omega} \right)^{1/2}.
\end{equation}
{
Throughout the paper, we use the following identity several times
\begin{equation}\label{id}
(z\cdot \nabla) (b_u\times x)=b_u \times z,\quad z\in \mathbb{R}^3.
\end{equation}}

\begin{lemma}
\label{ledesfnl}
Let $\Omega$ be a $C^{1,1}$-domain and $ R > \delta({\mathcal B}_0) $.  Then  
\begin{eqnarray*}
\|\nabla  u\|_{2,\Omega_R} \leq C_1(\partial \Omega)  \|u\|_{\mc{V}_R},  \quad
\|u\|_{6,\Omega_R}\leq C_2(\partial \Omega,\alpha) \|u\|_{\mc{V}_R}, \quad
|a_u| + |b_u| \leq C_3(\partial \Omega,\alpha)  \|u\|_{\mc{V}_R} , 
\end{eqnarray*}
for all $u \in {\mathcal V}_R,$  where $u_{\mathcal S} (x) = a_u + b_u \times x$, and the constants $C_i$ are independent of $R$. Moreover
\[
\|u\|_{2,\Omega_R} \leq  C_4(\partial \Omega,R,\alpha) \|u\|_{\mc{V}_R}.
\]
\end{lemma}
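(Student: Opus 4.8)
\textbf{Proof plan for Lemma \ref{ledesfnl}.}

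The plan is to reduce all four estimates to a single Korn-type inequality on $\Omega_R$ that is uniform in $R$, namely
\[
\|\nabla u\|_{2,\Omega_R}^2 \leq C(\partial\Omega)\left(\|\mathbb{D}(u)\|_{2,\Omega_R}^2 + \alpha\|[u-u_{\mathcal S}]_\tau\|_{2,\partial\Omega}^2\right) = C(\partial\Omega)\|u\|_{\mathcal V_R}^2,
\]
from which the first inequality is immediate, the $L^6$ bound follows from the Sobolev embedding $H^1_0(B_R)\hookrightarrow L^6(B_R)$ applied to the zero-extension of $u$ (which lies in $H^1_0(B_R)^3$ since $u_{|\partial B_R}=0$; the embedding constant is the scale-invariant one, hence independent of $R$), and the bound on $|a_u|+|b_u|$ follows by writing $u_{\mathcal S}=u-(u-u_{\mathcal S})$ near $\partial\Omega$ and using trace theory. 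The last estimate, $\|u\|_{2,\Omega_R}\leq C_4(R)\|u\|_{\mathcal V_R}$, is then just Poincaré on the bounded set $\Omega_R$ for functions vanishing on $\partial B_R$ — here the constant is allowed to depend on $R$, so no uniformity work is needed.

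First I would establish the uniform Korn inequality. The standard second Korn inequality on a bounded Lipschitz domain gives $\|\nabla u\|_{2,\Omega_R}\leq C(\Omega_R)(\|\mathbb{D}(u)\|_{2,\Omega_R}+\|u\|_{2,\Omega_R})$, but both the constant and the lower-order term are problematic for uniformity. To fix the lower-order term: since $u$ vanishes on $\partial B_R$, on the annular region $\Omega^{\delta}\cap B_R$ (with $\delta$ large enough that $\mathcal B_0\subset B_\delta$) one has a Poincaré-type control, while on the fixed region $\Omega_\delta$ one subtracts the best rigid motion. Concretely, I would argue by contradiction against a normalized sequence $u_k\in\mathcal V_{R_k}$ with $\|\nabla u_k\|_{2,\Omega_{R_k}}=1$ and $\|u_k\|_{\mathcal V_{R_k}}\to 0$; restricting to a fixed large ball and using that $\mathbb{D}(u_k)\to 0$ in $L^2$ there plus the boundary term $\to 0$, a compactness argument forces $u_k$ to converge to a rigid motion that is tangent to $\partial\Omega$ in the normal component and has vanishing tangential trace, hence is zero, contradicting normalization. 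The uniformity in $R$ comes from carrying out the contradiction argument on a \emph{fixed} subdomain and exploiting the zero boundary condition at $|x|=R$ to transfer control to the far field; alternatively one can cite the analogous uniform Korn inequality for exterior domains with Navier conditions established in the spirit of Galdi's invading-domain estimates.

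The main obstacle is precisely the $R$-independence of the Korn constant $C_1(\partial\Omega)$ together with the fact that the boundary penalty term lives only on the fixed interface $\partial\Omega$, not on all of $\partial\Omega_R$. The delicate point is that a rigid motion $u_{\mathcal S}=a+b\times x$ has $\|\nabla u_{\mathcal S}\|_{2,\Omega_R}$ growing in $R$, so the rigid part must be pinned down entirely by the data on $\partial\Omega$: the condition $(u-u_{\mathcal S})\cdot n=0$ on $\partial\Omega$ controls the normal component and $\alpha\|[u-u_{\mathcal S}]_\tau\|_{2,\partial\Omega}$ the tangential component, and one must check that together with $u_{|\partial B_R}=0$ these pin $(a_u,b_u)$ quantitatively and uniformly — this is where the geometric nondegeneracy of $\partial\Omega$ (a genuine surface, not contained in a plane) enters, ensuring the map $u_{\mathcal S}\mapsto(u_{\mathcal S}\cdot n|_{\partial\Omega},[u_{\mathcal S}]_\tau|_{\partial\Omega})$ is injective on $\mathcal R$. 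Once this injectivity is quantified, the bound on $|a_u|+|b_u|$ and the full inequality follow, and I would record the constants' dependence ($C_1$ on $\partial\Omega$ only; $C_2,C_3$ additionally on $\alpha$ through the penalty normalization; $C_4$ additionally on $R$ via Poincaré) exactly as stated.
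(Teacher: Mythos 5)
Your overall strategy (reduce everything to an $R$-uniform Korn inequality, then Sobolev for $L^6$, trace plus finite-dimensionality for $|a_u|+|b_u|$, Poincar\'e for the last bound) is a genuinely different route from the paper, which proves the uniform Korn estimate by a direct computation: setting $v=u-u_{\mathcal S}$, using the identity $2\|\mathbb D(v)\|^2_{2,\Omega_R}=\|\nabla v\|^2_{2,\Omega_R}+\int_{\partial\Omega_R} n\cdot[(v\cdot\nabla)v]\,dS$, evaluating the $\partial B_R$ integrals exactly for the rigid part, and observing that the large terms of size $|b_u|^2|B_R|$ cancel exactly (via $|B_R|-|\Omega_R|=|\mathcal B_0|$), which yields the explicit inequality $\|\nabla u\|^2_{2,\Omega_R}+|b_u|^2|\mathcal B_0|\le C\alpha\|[u-u_{\mathcal S}]_\tau\|^2_{2,\partial\Omega}+2\|\mathbb D(u)\|^2_{2,\Omega_R}$ in \eqref{lem3-8}--\eqref{lem3-9b}; $|a_u|$ is then recovered from the trace of $u\cdot n$ and the fact that $a\mapsto\|a\cdot n\|_{2,\partial\Omega}$ is a norm on $\mathbb R^3$. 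Your compactness route could in principle be made to work, but as written it has gaps exactly at the points where the work lies. First, in your contradiction argument the limit cannot be identified the way you claim: the boundary conditions constrain $u_k-(u_k)_{\mathcal S}$, not $u_k$, and the rigid motions $(u_k)_{\mathcal S}$ attached to $u_k$ are not a priori bounded or convergent at that stage (their control is part of what the lemma asserts), so you cannot conclude that the limiting rigid motion has ``vanishing normal and tangential trace'' on $\partial\Omega$. The limit has to be killed differently, e.g.\ by the uniform $L^6$ bound (a nonzero rigid motion is not in $L^6$ of an exterior domain), and even then a zero limit on a fixed ball does not contradict the normalization $\|\nabla u_k\|_{2,\Omega_{R_k}}=1$ unless you rule out concentration of the gradient in the far field.

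That far-field, $R$-uniform step is precisely what you flag as ``the main obstacle,'' but you do not actually supply it: ``transfer control to the far field'' needs an argument (for instance a cut-off splitting with the first Korn inequality $\|\nabla w\|^2_{2}\le 2\|\mathbb D(w)\|^2_{2}$ for the part vanishing on both $\partial B_R$ and the cut, with commutator terms living on a fixed annulus), and the alternative of ``citing the analogous uniform Korn inequality for exterior domains with Navier conditions'' begs the question, since this lemma is that result; the paper obtains the uniformity precisely through the exact cancellation of the $|b_u|^2|B_R|$ terms rather than by compactness. Finally, your justification of the $L^6$ bound is incorrect as stated: the zero-extension of $u\in\mathcal V_R$ outside $B_R$ is a function on the exterior domain $\Omega$, not an element of $H^1_0(B_R)^3$, because $u$ is not defined on $\mathcal B_0$ and cannot be extended into it as an $H^1$ function (only the normal traces match $u_{\mathcal S}$; this is exactly the obstruction recorded in the remark following Lemma \ref{ledesfnl}). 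One must instead invoke the Sobolev-type inequality for the exterior domain $\Omega$ (as in \cite[Theorem II.6.1]{G}), whose constant depends on $\partial\Omega$ but not on $R$. With these three points repaired -- kill the limiting rigid motion via decay rather than boundary data, prove (not cite) the far-field uniformity, and use the exterior-domain Sobolev inequality -- your scheme would give an alternative, less explicit proof; as it stands, the argument is incomplete at its decisive steps.
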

\begin{proof} 
Consider $v:= u -  u_{\mathcal S}$. Since $\nabla \cdot v = 0$, we have 
\begin{equation}\label{lem3-1}
2 \|{\mathbb D}(v)\|^2_{2,\Omega_R} = \| \nabla v \|^2_{2,\Omega_R} + \int_{\partial \Omega_R}  n \cdot [(v \cdot \nabla) v] \dS.
\end{equation}
As $u\in \mc{V}_R$, we have $u = 0$ on $\partial B_R$ and the relation \eqref{lem3-1} becomes
\begin{equation}\label{lem3-2}
 2 \|{\mathbb D}(u)\|^2_{2,\Omega_R}=2 \|{\mathbb D}(v)\|^2_{2,\Omega_R} =  \| \nabla v \|^2_{2,\Omega_R} + \int_{\partial \Omega}  n \cdot [(v \cdot \nabla) v] \dS - \int_{\partial B_R} n \cdot [ (u_{\mathcal S}   \cdot \nabla) v] \dS.
\end{equation}
Observe that $v\cdot n=0$ on $\partial\Omega$. Thus we have $v=[u -u_{\mathcal S}]_\tau$ on $\partial\Omega$. As $[u -u_{\mathcal S}]_\tau=   n \times ([u -u_{\mathcal S}] \times n)$, we obtain
$\nabla_{\tau}\left([u -u_{\mathcal S}]_\tau \cdot n\right)=0 \mbox{ on }\partial\Omega$,
where $\nabla_{\tau}$ is the surface gradient on $\partial\Omega$.
As in  \cite[pg. 618, equation (42)]{defv}, we have  \begin{align}\label{lem3-3}
n \cdot [(v \cdot \nabla) v]=-[u -u_{\mathcal S} ]_\tau \cdot (\nabla_{\tau} n )  [u - u_{\mathcal S}]_\tau\mbox{ on }\partial\Omega.
\end{align}
As $v=u-u_{\mc{S}}$, we have 
\begin{equation}\label{lem3-3a}
- \int_{\partial B_R} n \cdot [ (u_{\mathcal S}   \cdot \nabla) v ] \dS = \int_{\partial B_R} n \cdot [ (u_{\mathcal S}   \cdot \nabla) u_{\mathcal S} ] \dS - \int_{\partial B_R} n \cdot [ (u_{\mathcal S}   \cdot \nabla) u ] \dS.
\end{equation}
We use $u_{\mathcal S} (x) = a_u + b_u \times x$ ($a_u,b_u \in {\mathbb R}^3$) and identity \eqref{id} to obtain 
\begin{equation*}
(u_{\mathcal S}   \cdot \nabla) u_{\mathcal S} = b_u \times a_u + b_u \times (b_u \times x).
\end{equation*}
Due to $\nabla\cdot (b_u \times a_u)=0$, we have
\begin{multline}\label{lem3-4}
 \int_{\partial B_R} n \cdot [ (u_{\mathcal S}   \cdot \nabla) u_{\mathcal S} ] \dS = \int_{\partial B_R} n \cdot [b_u \times (b_u \times x)] \dS = \int_{\partial B_R} n \cdot [(x\cdot b_u)b_u-|b_u|^2x] \dS \\ =\int_{ B_R} \nabla (x\cdot b_u)\cdot b_u \dx - |b_u|^2\int_{ B_R} \nabla \cdot x \dx
 = -2 |b_u|^2 |B_R|.
\end{multline}
We have the following relation between the surface divergence at $\partial B_R$ and the divergence in Cartesian coordinates {(see, for example,  \cite{Xu})
\[ \nabla_{\tau} \cdot  u  = \nabla \cdot u - [( n \cdot \nabla) u ] \cdot n \text{ on } \partial B_R. \]
Since $u = 0$ on $\partial B_R$ and $\nabla \cdot u = 0$ in $\Omega$, we get
$   [( n \cdot \nabla) u ] \cdot n = 0 \text{ on } \partial B_R$.
Therefore,
\begin{equation}\label{lem3-5}
\int_{\partial B_R} n \cdot [ (u_{\mathcal S}   \cdot  \nabla) u ] \dS = \int_{\partial B_R} n \cdot ( [u_{\mathcal S}]_\tau   \cdot \nabla_{\tau}  u) \dS  + \int_{\partial B_R} (u_{\mathcal S} \cdot n) 
 [( n \cdot \nabla) u ] \cdot n \dS = 0.
\end{equation}}
Using the relations \eqref{lem3-4} and  \eqref{lem3-5}, the identity \eqref{lem3-3a} becomes
\begin{equation}\label{lem3-6}
- \int_{\partial B_R} n \cdot [ (u_{\mathcal S}   \cdot \nabla) v] \dS = -2 |b_u|^2 |B_R|.
\end{equation}
Combining \eqref{lem3-3} and \eqref{lem3-6}, the relation \eqref{lem3-2} yields
\begin{equation}\label{lem3-7}
   2 \|{\mathbb D}(u)\|^2_{2,\Omega_R} + \int_{\partial \Omega}   [u -u_{\mathcal S} ]_\tau \cdot  (\nabla_{\tau}  n )[u - u_{\mathcal S}]_\tau \dS=  \| \nabla v \|^2_{2,\Omega_R} -2 |b_u|^2 |B_R|.
\end{equation}
As $v=u-u_{\mc{S}}$, we can write 
\begin{equation}\label{g3}
\| \nabla v \|^2_{2,\Omega_R}=  \| \nabla u \|^2_{2,\Omega_R} - \| \nabla u_{\mc{S}} \|^2_{2,\Omega_R}- 2 \int_{\Omega_R} \nabla u_{\mc{S}}:\nabla (u-u_{\mc{S}}) \dx
\end{equation}
By integration by parts and using the facts that $(u-u_{\mc{S}})\cdot n=0$ on $\partial\Omega$, $u=0$ on $\partial B_R$, we can compute
\begin{multline}\label{g2}
\int_{\Omega_R} \nabla u_{\mc{S}}:\nabla (u-u_{\mc{S}}) \dx = \int_{\partial\Omega_R} [(n\cdot\nabla)u_{\mc{S}}]\cdot (u-u_{\mc{S}}) \dS \\= \int_{\partial\Omega} [(n\cdot\nabla)u_{\mc{S}}]\cdot [u - u_{\mathcal S}]_\tau \dS - \int_{\partial B_R} [(n\cdot\nabla)u_{\mc{S}}]\cdot u_{\mc{S}} \dS.
\end{multline}
Using the identity \eqref{id}, we have
\begin{multline*}
[(n\cdot\nabla)u_{\mc{S}}]\cdot u_{\mc{S}} = (b_u \times n)\cdot (a_u + b_u\times x) = n\cdot (a_u \times b_u) + n\cdot [(b_u\times x)\times b_u]\\ = n\cdot (a_u \times b_u) + n\cdot [x|b_u|^2-(x\cdot b_u)b_u].
\end{multline*}
Thus,
\begin{equation}\label{g1}
\int_{\partial B_R} [(n\cdot\nabla)u_{\mc{S}}]\cdot u_{\mc{S}} \dS = \int_{B_R} \nabla\cdot \left((a_u \times b_u)+[x|b_u|^2-(x\cdot b_u)b_u]\right) = 2|b_u|^2 |B_R|.
\end{equation}
Using the relation \eqref{g1} and identity \eqref{id}, the relation \eqref{g2} becomes
\begin{equation}\label{g4}
\int_{\Omega_R} \nabla u_{\mc{S}}:\nabla (u-u_{\mc{S}}) \dx = \int_{\partial\Omega} (b_u\times n)\cdot [u - u_{\mathcal S}]_\tau \dS -2|b_u|^2 |B_R|.
\end{equation}
Since 
\begin{equation*}
\nabla u_{\mc{S}} = \begin{bmatrix}
0 & -(b_u)_3 & (b_u)_2\\
(b_u)_3 & 0 & -(b_u)_1 \\
-(b_u)_2 & (b_u)_1 & 0
\end{bmatrix},
\end{equation*}
we have from \eqref{g3} and \eqref{g4} that
\begin{equation*}
\| \nabla v \|^2_{2,\Omega_R} = \| \nabla u \|^2_{2,\Omega_R} - 2 |b_u|^2 |\Omega_R| + 4|b_u|^2 |B_R|  - 2  \int_{\partial\Omega} (b_u\times n)\cdot [u - u_{\mathcal S}]_\tau \dS.
\end{equation*}
Combining the above relation and \eqref{lem3-7}, we have
\begin{multline}\label{lem3-8}
\| \nabla u \|^2_{2,\Omega_R}-2 |b_u|^2 |B_R| - 2 |b_u|^2 |\Omega_R| + 4|b_u|^2 |B_R|  \\= 2  \int_{\partial\Omega} (b_u\times n)\cdot [u - u_{\mathcal S}]_\tau \dS+ \int_{\partial \Omega}   [u -u_{\mathcal S} ]_\tau \cdot  (\nabla_{\tau}  n )  [u - u_{\mathcal S}]_\tau \dS+  2 \|{\mathbb D}(u)\|^2_{2,\Omega} .
\end{multline}
We can estimate
{
\begin{equation}\label{lem3-9}
 \int_{\partial \Omega}   [u -u_{\mathcal S} ]_\tau \cdot (\nabla_{\tau}  n )[u - u_{\mathcal S}]_\tau \dS \leq C(\partial \Omega,\alpha) \alpha \| [u-u_{\mathcal S}]_{\tau} \|^2_{2,\partial \Omega},
\end{equation}
\begin{equation}
\label{lem3-9b}
 2  \int_{\partial\Omega} (b_u\times n)\cdot [u - u_{\mathcal S}]_\tau \dS \leq |b_u|^2 |{\mathcal B}_0| + \frac{|\partial \Omega|}{|{\mathcal B}_0|} \| [u-u_{\mathcal S}]_{\tau} \|^2_{2,\partial \Omega}.
\end{equation}
}
 Therefore, using the fact {$|B_R| - |\Omega_R| = |\mc{B}_0|$} and the relations \eqref{lem3-8}--\eqref{lem3-9b}, we obtain
 {
 \begin{equation*}
\| \nabla u \|^2_{2,\Omega_R} + |b_u|^2 |\mc{B}_0| \leq C(\partial \Omega,\alpha) \alpha \| [u-u_{\mathcal S}]_{\tau} \|^2_{2,\partial \Omega} +  2 \|{\mathbb D}(u)\|^2_{2,\Omega} .
\end{equation*}
It gives
\[
\begin{aligned}
 |b_u|^2 
 &  \leq \frac{C(\partial \Omega,\alpha)}{ |{\mathcal B}_0| }  \left[\alpha \| [u-u_{\mathcal S}]_{\tau} \|^2_{2,\partial \Omega}  +  2 \|{\mathbb D}(u)\|^2_{2,\Omega} \right] .
 \end{aligned}
\]
}
Since $u=0$ on $\partial B_R$, we have
\[
\|u\|^2_{2,\Omega_R} \leq C(\Omega_R) \| \nabla u \|^2_{2,\Omega_R}  \leq C(\Omega_R) \left[C(\partial \Omega,\alpha) \alpha \| [u-u_{\mathcal S}]_{\tau} \|^2_{2,\partial \Omega} +  2 \|{\mathbb D}(u)\|^2_{2,\Omega}\right].
\]

It remains to bound $|a|$ in terms of $\left( 2\| {\mathbb D}(u)\|^2_{2,\Omega_R} + \alpha \| [u-u_{\mathcal S}]_{\tau} \|^2_{2,\partial \Omega} \right)^{1/2}$. We have
\[
 u \cdot n  = a \cdot n + (b \times x )\cdot n, \text{ on } \partial \Omega.
\]
We fix $R_0>0$ such that $\delta (\mathcal{B}_0) < R_0 < R$. We have
\begin{align*}
 \| a \cdot n \|_{2,\partial \Omega}   \leq  & \, \| u \cdot n \|_{1/2,2,\partial \Omega}  +  \| (b \times x )\cdot n \|_{1/2,2,\partial \Omega} \\
 \leq & \, C(\Omega_{R_0}) (\| u \|_{2,\Omega_{R_0}} +  \|\nabla  u \|_{2,\Omega_{R_0}})  +  C(\partial \Omega) |b| \\ 
 \leq & \, C(\Omega_{R_0}) (\| u \|_{6,\Omega_{R_0}} +  \|\nabla  u \|_{2,\Omega_{R_0}}) + C(\partial \Omega) |b|  \\ 
 \leq & \, C(\Omega_{R_0})   \|\nabla  u \|_{2,\Omega_R} +  C(\partial \Omega) |b| \leq C(\partial \Omega,R_0)  \left( 2\| {\mathbb D}(u)\|^2_{2,\Omega_R} + \alpha \| [u-u_{\mathcal S}]_{\tau} \|^2_{2,\partial \Omega} \right)^{1/2}. 
 \end{align*}
We would like to prove 
\[
|a| \leq C \| a \cdot n \|_{2,\partial \Omega}, \,\ \forall \ a \in {\mathbb R}^3.
\]
It is enough to show that  $ {\mathbb R}^3 \ni a \mapsto \| a \cdot n \|_{2,\partial \Omega}$, is a norm in $ {\mathbb R}^{3}$. Indeed we have to show
\[ \| a \cdot n \|_{2,\partial \Omega} =0 \implies a =0,
\]
which implies we need to show
 \[ a \cdot n = 0 \text{ on } \partial \Omega \implies a =0 .
\]
Let us define $\zeta(x):= a(a\cdot x)$, $x\in \R^3$. Observe that 
\begin{equation*}
\int_{\mc{B}_0} \nabla\cdot \zeta \dx = \int_{\partial\Omega} (a\cdot n)(a\cdot x) \dS
\end{equation*}
As $a \cdot n = 0$ on $\partial \Omega$ and $\nabla\cdot \zeta = |a|^2$, we conclude that $a=0$.
 \end{proof}
 \begin{remark}
 It is possible to extend $u \in {\mathcal V}_R$ by zero to $B^R$, but unlike the Dirichlet case, $u$ cannot be extend by $u_{\mathcal S}$ to ${\mathcal B}_0$ as a $H^1({\mathbb R}^3)^3$ function, because we only know that $u \cdot n =  u_{\mathcal S} \cdot n$ on  $\partial \Omega$. {Indeed, if we define} 
  \begin{equation}
v(x) = \left\{  \begin{array}{l}  a+b\times x = u_{\mathcal S}(x), \, x \in {\mathbb R}^3 \setminus \Omega = \mc{B}_0, \medskip \\  u(x) , \, x \in  \Omega, \end{array}\right.
\label{defv}
\end{equation}
we have
\[
\nabla v = \chi_{\Omega_R} \nabla u + \chi_{{\mathcal B}_0} \nabla u_{\mathcal S}  + n \otimes  (u_{\mathcal S}  - u)  \delta_{\partial \Omega}, 
\]
with $(u_{\mathcal S} - u)\cdot n = (a+b\times x - u) \cdot n = 0$ at $\partial \Omega$, and therefore
$$
\nabla \cdot v = \chi_{\Omega_R} \nabla \cdot u + \chi_{{\mathcal B}_0} \nabla \cdot  u_{\mathcal S}  + n \cdot (u_{\mathcal S}  - u)  \delta_{\partial \Omega} = 0. 
$$
However, for the gradient we only know
\[
\nabla v= \chi_\Omega \nabla u + \chi_{{\mathcal B}_0} \nabla u_{\mathcal S}  + n \otimes  [u_{\mathcal S} - u]_\tau  \delta_{\partial \Omega}. 
\]
\end{remark}
\subsection{Construction of basis}

Next, we examine an auxiliary steady problem. Recall the norm \eqref{nvR} defined in the space ${\mathcal V}_R$, and that for each $f \in {\mathcal H}_R$ there exists an associated element $f_{\mathcal S} \in {\mathcal R}$ such that $f\cdot n = f_{\mathcal S}\cdot n$ on $\partial \Omega$, $f_{\mathcal S}$ being determined by $a_f,b_f \in {\mathbb R}^3$ such that $f_{\mathcal S}(x)=a_f + b_f \times x$.

\begin{lemma}
Let $\partial \Omega$ be of class $C^{1,1}$ and $f \in {\mathcal H}_R$. Then the problem
\begin{equation}
\left\{
\begin{aligned}
- \nabla \cdot {\mathbb T}(u,p)  = f, \quad
\nabla \cdot  u  = 0 \quad  \mbox{ in }\Omega_R, \\ 
u \cdot n  = V \cdot n \quad \mbox{ on }\partial\Omega,  \\ 
2 [{\mathbb D}(u) n] \times n + \alpha u \times n 
 = \alpha V \times n \quad \mbox{ on }\partial\Omega, \\
u  = 0 \quad \mbox{ on } \partial B_R, \\ 
\displaystyle m a_f  =  \int\limits_{\partial\Omega} {\mathbb T}(u,p) n \dS, \\ 
\displaystyle  J_0 b_f  =  \int\limits_{\partial\Omega}  x\times  {\mathbb T}(u,p) n \dS,
\end{aligned}
\label{systoke}
\right.
\end{equation}
has a unique solution $(u,V,p) \in {\mathcal{V}}_R \cap H^2(\Omega_R)^3 \times {\mathcal R} \times H^1(\Omega_R)$, with $V(x)=u_{\mathcal S}(x)=a_u + b_u \times x$ and $\displaystyle \int_{\Omega_{R_0}}p =0,$ for fixed $R_0<R$. Moreover, the following estimates hold
\begin{eqnarray*}
\displaystyle   \| u \|_{{\mathcal V}_R}   &\leq & C(\partial \Omega,\alpha,R) \left( \|f\|^2_{2,\Omega_R} + m |a_f|^2 + b_f \cdot J_0 b_f \right)^{1/2}  \\
|a_u| + |b_u|&\leq & C(\partial \Omega)  \| u \|_{{\mathcal V}_R}  \\
\|p\|_{2,\Omega_{R_0}}&\leq & C(\partial \Omega,R_0,\alpha)(\ \|f\|_{2,\Omega_R} +  \| u \|_{{\mathcal V}_R}  ) \\
\|D^2 u\|_{2,\Omega_R} + \|\nabla  p \|_{2,\Omega_R} & \leq & C(\partial \Omega,R_0,\alpha)(\|f\|_{2,\Omega_R} + \| u \|_{{\mathcal V}_R} )
\end{eqnarray*} 
\end{lemma}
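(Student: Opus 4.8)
The plan is to recast \eqref{systoke} as a single coercive linear variational problem on the Hilbert space $\mathcal{V}_R$ and then to recover the pressure, the pointwise Navier condition and the force/torque identities a posteriori. First I test the momentum equation against an arbitrary $\varphi\in\mathcal{V}_R$ and integrate by parts over $\Omega_R$. Because $\nabla\cdot\varphi=0$ in $\Omega_R$ and $\varphi=0$ on $\partial B_R$, the pressure term and the boundary integral over $\partial B_R$ vanish, leaving $2\int_{\Omega_R}\mathbb{D}(u){:}\mathbb{D}(\varphi)\dx-\int_{\partial\Omega}(\mathbb{T}(u,p)n)\cdot\varphi\dS=\int_{\Omega_R}f\cdot\varphi\dx$. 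On $\partial\Omega$ I split $\varphi=\varphi_{\mathcal{S}}+[\varphi-\varphi_{\mathcal{S}}]_\tau$: the rigid part is absorbed by the last two equations of \eqref{systoke}, giving $\int_{\partial\Omega}(\mathbb{T}(u,p)n)\cdot\varphi_{\mathcal{S}}\dS=m\,a_f\cdot a_\varphi+b_f\cdot J_0 b_\varphi$, while for the tangential part I use the Navier condition in the form $(2\mathbb{D}(u)n)_\tau=-\alpha[u-u_{\mathcal{S}}]_\tau$ on $\partial\Omega$ (see Remark \ref{BCv}) together with Remark \ref{remtau}. This turns \eqref{systoke} into: find $u\in\mathcal{V}_R$, with $V:=u_{\mathcal{S}}$, such that
\[
2\int_{\Omega_R}\mathbb{D}(u){:}\mathbb{D}(\varphi)\dx+\alpha\int_{\partial\Omega}[u-u_{\mathcal{S}}]_\tau\cdot[\varphi-\varphi_{\mathcal{S}}]_\tau\dS=\int_{\Omega_R}f\cdot\varphi\dx+m\,a_f\cdot a_\varphi+b_f\cdot J_0 b_\varphi
\]
for all $\varphi\in\mathcal{V}_R$; the left-hand side is exactly the $\mathcal{V}_R$-scalar product.

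\textbf{Solvability and the first two estimates.} The space $\mathcal{V}_R$ is a closed subspace of $H^1(\Omega_R)^3$ (the rigid motion $u_{\mathcal{S}}$ being uniquely determined by $u\cdot n|_{\partial\Omega}$, as in the proof of Lemma \ref{ledesfnl}), and by Lemma \ref{ledesfnl} the norm \eqref{nvR} is equivalent to the $H^1(\Omega_R)$-norm on it, so $\mathcal{V}_R$ is a Hilbert space for this scalar product. The right-hand side defines a bounded linear functional on $\mathcal{V}_R$: using $\|\varphi\|_{2,\Omega_R}\le C\|\varphi\|_{\mathcal{V}_R}$ and $|a_\varphi|+|b_\varphi|\le C\|\varphi\|_{\mathcal{V}_R}$ from Lemma \ref{ledesfnl} and the fact that $J_0$ is symmetric positive definite (the body has positive volume), its norm is $\le C(\partial\Omega,\alpha,R)\,(\|f\|_{2,\Omega_R}^2+m|a_f|^2+b_f\cdot J_0 b_f)^{1/2}$. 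By the Riesz representation theorem there is a unique $u\in\mathcal{V}_R$; taking $\varphi=u$ yields $\|u\|_{\mathcal{V}_R}^2\le C(\dots)^{1/2}\|u\|_{\mathcal{V}_R}$, hence the first estimate, and $|a_u|+|b_u|\le C\|u\|_{\mathcal{V}_R}$ is the second (Lemma \ref{ledesfnl}).

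\textbf{Pressure, regularity, and completion.} Restricting the variational identity to $\varphi\in\mathcal{D}(\Omega_R)^3$ with $\nabla\cdot\varphi=0$ gives $2\int_{\Omega_R}\mathbb{D}(u){:}\nabla\varphi\dx=\int_{\Omega_R}f\cdot\varphi\dx$, so by de Rham's theorem there is $p$ with $-\nabla\cdot\mathbb{T}(u,p)=f$ in $\mathcal{D}'(\Omega_R)$; since $\nabla p=-\Delta u-f\in H^{-1}(\Omega_R)$ on the bounded connected domain $\Omega_R$, the div-gradient (Nečas) lemma and the normalization $\int_{\Omega_{R_0}}p=0$ give $p\in L^2(\Omega_R)$ with $\|p\|_{2,\Omega_{R_0}}\le C(\|f\|_{2,\Omega_R}+\|u\|_{\mathcal{V}_R})$, the third estimate. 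For $H^2\times H^1$ regularity I invoke the elliptic theory for the Stokes system with Navier-slip conditions: $\partial\Omega\in C^{1,1}$, $\partial B_R$ is smooth, and for $R>\delta(\mathcal{B}_0)$ the two boundary pieces are disjoint, while the data on $\partial\Omega$ (namely $u_{\mathcal{S}}\cdot n\in H^{3/2}(\partial\Omega)$ and $\alpha u_{\mathcal{S}}\times n\in H^{1/2}(\partial\Omega)$, using $n\in W^{1,\infty}(\partial\Omega)$) and on $\partial B_R$ ($u=0$) are admissible; interior regularity together with boundary regularity near each component gives $u\in H^2(\Omega_R)^3$, $p\in H^1(\Omega_R)$, the lower-order and boundary terms of the a priori estimate being absorbed by Step 2, which yields the fourth estimate. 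Finally, once $u\in H^2$, $p\in H^1$, integrating by parts back in the variational identity and letting $\varphi$ range over $\mathcal{V}_R$ recovers the pointwise Navier condition (take $\varphi_{\mathcal{S}}=0$ with arbitrary tangential trace on $\partial\Omega$) and the force/torque identities (take $\varphi$ realizing an arbitrary pair $(a_\varphi,b_\varphi)\in\mathbb{R}^6$, which is admissible since $\int_{\partial\Omega}(a+b\times x)\cdot n\dS=0$ makes the normal trace compatible, so such $\varphi$ exists by a Bogovskii-type construction extended by zero past $\partial B_R$); the remaining relations hold because $u\in\mathcal{V}_R$. Uniqueness follows since the difference of two solutions satisfies the variational identity with zero data, so its $\mathcal{V}_R$-norm vanishes, forcing $u$, then $V=u_{\mathcal{S}}$, then $\nabla p$ to coincide, and the normalization fixes $p$.

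\textbf{Main obstacle.} The genuinely delicate step is the $H^2\times H^1$ regularity: one needs a (localized) regularity statement for the steady Stokes operator under Navier-slip conditions on $\partial\Omega$ and Dirichlet conditions on $\partial B_R$, and must then absorb the lower-order terms of the resulting a priori bound using the coercivity estimate of Step 2 and the pressure normalization. Everything else is routine once the bilinear form of the first step is identified with the $\mathcal{V}_R$-scalar product — which itself relies on using the force/torque balance to cancel the rigid part of the test function and the Navier condition to convert the tangential traction into the friction term, so that the Riesz argument applies directly.
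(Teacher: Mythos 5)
Your proposal is correct and follows essentially the same route as the paper: both identify the variational problem whose bilinear form is precisely the $\mathcal V_R$-scalar product, solve it by a Hilbert-space representation theorem (you use Riesz; the paper invokes Lax--Milgram, which is the same here), recover the pressure via de Rham, deduce the pointwise Navier condition by testing with $\varphi\in\mathcal V_R$ having $\varphi\cdot n=0$ on $\partial\Omega$ and a density argument, deduce the force/torque identities by testing with $\varphi$ whose normal trace equals that of $\widetilde{\textsf{e}_i}$, and invoke Navier-slip Stokes regularity (the paper cites \cite[Theorem 4.5]{AcAmConGh}) for the $H^2\times H^1$ estimate. The only cosmetic difference is the order in which the pointwise boundary identities and the $H^2$ regularity are established (the paper recovers the former first, using only $\nabla\cdot\mathbb{T}(u,p)\in L^2$); this does not affect the substance of the argument.
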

\begin{proof} Let us consider the following problem: given $f \in {\mathcal H}_R$,  find  $u \in {\mathcal{V}}_R$ such that 
\begin{equation}
\label{eqpf}
 2 \int_{\Omega_R} {\mathbb D}(u): {\mathbb D}(\varphi ) \dx + \alpha \int_{\partial \Omega} [u-u_{\mathcal S}]_{\tau}  \cdot [\varphi - \varphi_{\mathcal S}]_{\tau}\dS = \ \int_{\Omega_R} f \cdot \varphi \dx + m a_f \cdot a_\varphi + b_f \cdot J_0 b_\varphi, \quad \forall \varphi \in {\mathcal{V}}_{R}.
\end{equation}
Using the estimates of Lemma \ref{ledesfnl}, and applying Lax-Milgram Theorem, we conclude that the problem \eqref{eqpf} has  unique solution $u \in \mathcal{V}_{R}$ and satisfies
\begin{equation*}
  \| u \|_{{\mathcal V}_R} \leq C(\partial \Omega, \alpha, R) \left(  \|f\|^2_{2,\Omega_R} + m |a_f|^2 + b_f \cdot J_0 b_f \right)^{1/2}.
\end{equation*}
Since $u \in \mathcal{V}_{R}$, it is clear that $u \in H^{1}({\Omega}_R)$ and that it satisfies \eqref{systoke}$_{2}$, \eqref{systoke}$_{3}$ {with $V  = u_{\mathcal S}$}, and \eqref{systoke}$_{4}$.  Since $\mathcal{V}_{R}$ contains the space of all solenoidal vector functions of ${\mathcal D}(\Omega_R)^3$, we have 
$$
 2 \int_{\Omega_R} {\mathbb D}(u):{\mathbb D}(\varphi) \dx  = \ \int_{\Omega_R} f \cdot \varphi \dx , \quad  \forall\ \varphi \in {\mathcal D}(\Omega_R)^3 \text{ with } \nabla \cdot  \varphi =0.
$$
Therefore there exists $p \in L^{2}(\Omega_R)$, which is unique if $\displaystyle \int_{\Omega_{R_0}}p \dx = 0,$ such that 
\[
  \int_{\Omega_R} \left( 2 {\mathbb D}(u):{\mathbb D}(\varphi)  -  p \nabla \cdot \varphi \right) \dx =\ \int_{\Omega_R} f \cdot \varphi \dx, \quad  \forall \varphi \in  {\mathcal D}(\Omega_R)^3,
\]
that is,
\begin{equation}
- \nabla \cdot  {\mathbb T}(u,p) = \ f \text{ in } {\mathcal D}'(\Omega_R)^3.
\label{rrrr}
\end{equation}

We have $\nabla \cdot  {\mathbb T}(u,p) \in L^2(\Omega_R)^3$ because $f \in L^2(\Omega_R)^3$ and $2 {\mathbb D}(u) - p {\mathbb I} \in L^2(\Omega_R)^{3\times 3}$. Let $\varphi \in {\mathcal V}_{R}$ be such that $\varphi \cdot n = 0$ at $
\partial \Omega$. Taking the scalar product of (\ref{rrrr}) with this $\varphi $  and integrating by parts, we find 
\begin{equation*}
 - 2 \int_{\partial \Omega}[\varphi]_{\tau} \cdot  [{\mathbb D}(u) n]_\tau  \dS + 2 \int_{\Omega_R} {\mathbb D}(u): {\mathbb D}(\varphi ) \dx = \ \int_{\Omega_R} f\cdot \varphi \dx.
\end{equation*}
Taking the same $\varphi$ in \eqref{eqpf} and comparing the two equations, we conclude:
\begin{equation}
\int_{\partial \Omega} \left(  2 [{\mathbb D}(u) n]_\tau + \alpha  [u-V]_{\tau} \right) \cdot [\varphi]_\tau \ \dS = 0, \quad \forall \varphi \in {\mathcal V}_{R}
\text{ such that } \varphi  \cdot n = 0,
\label{eqvh}
\end{equation}
The next argument was used in \cite{AcAmConGh} and allows us to extend \eqref{eqvh} to any $\psi \in H^{1/2}(\partial \Omega)^3$. Given any $\psi \in H^{1/2}(\partial \Omega)^3$, there exists $\Psi \in H^1(\Omega)^3$ such that $\nabla \cdot \Psi=0$ in $\Omega$, $\Psi = 0$ on $\partial B_R$ and $\Psi = [\varphi]_\tau$ on $\partial \Omega$, that is $\Psi \in {\mathcal V}_{R}$. Then
$$
\int_{\partial \Omega} \left(  2 [{\mathbb D}(u) n]_\tau + \alpha  [u-V]_{\tau} \right) \cdot \psi \dS = \int_{\partial \Omega} \left( 2 [{\mathbb D}(u) n]_\tau + \alpha  [u-V]_{\tau} \right) \cdot [\psi]_\tau \dS = 0, \quad  \forall \psi \in H^{1/2}(\partial \Omega),
$$
meaning that 
$$
2 [{\mathbb D}(u) n]_\tau + \alpha  [u-V]_{\tau}  = 0 \text{ on } \partial \Omega.
$$

Now, for $i \in \{1,2,3\}$, let $\varphi \in {\mathcal V}_{R}$ be such that $\varphi \cdot n = \textsf{e}_i \cdot n$. Testing (\ref{rrrr}) with this $\varphi $  and integrating by parts, we find 
\begin{equation}
\textsf{e}_i \cdot \int_{\partial\Omega} {\mathbb T}(u,p ) n \dS - 2 \int_{\partial\Omega}[\varphi - \textsf{e}_i]_{\tau} \cdot  [{\mathbb D}(u) n]_\tau  \dS + 2 \int_{\Omega_R} {\mathbb D}(u): {\mathbb D}(\varphi ) \dx = \ \int_{\Omega_R} f\cdot \varphi \dx
\label{eqvarast}
\end{equation}
where we used
\[ 
\begin{aligned}
- \int_{\partial\Omega} \varphi  \cdot {\mathbb T}(u,p ) n \dS & =  - \textsf{e}_i \cdot \int_{\partial\Omega} {\mathbb T}(u,p ) n \dS - \int_{\partial\Omega}(\varphi - \textsf{e}_i ) \cdot  {\mathbb T}(u,p ) n  \dS \\ 
& = - \textsf{e}_i \cdot \int_{\partial\Omega} {\mathbb T}(u,p ) n \dS - 2 \int_{\partial\Omega}[\varphi - \textsf{e}_i]_{\tau} \cdot  [{\mathbb D}(u) n]_\tau  \dS.
\end{aligned}
\]
Taking this $\varphi$ in \eqref{eqpf} and taking into account that 
\[
 2 \int_{\partial\Omega}[\varphi - \textsf{e}_i]_{\tau} \cdot  [{\mathbb D}(u) n]_\tau  \dS - \alpha \int_{\partial \Omega} [u-V]_{\tau}  \cdot [\varphi - \textsf{e}_i]_{\tau} \dS = 0,
\]
yields
$$
2 \int_{\Omega_R} {\mathbb D}(u): {\mathbb D}(\varphi) \dx - 2 \int_{\partial\Omega}[\varphi - \textsf{e}_i]_{\tau} \cdot  [{\mathbb D}(u) n]_\tau  \dS = \ \int_{\Omega_R} f \cdot \varphi \dx + m a_f \cdot \textsf{e}_i 
$$
and comparing the latter equation with \eqref{eqvarast}, we conclude that 
$$
m a_f =  \int_{\partial\Omega} {\mathbb T}(u,p) n \dS.
$$
Analogously, taking $\varphi \in {\mathcal V}_{R}$ such that $\varphi_{\mathcal S} = \textsf{e}_i \times x,$ we obtain (\ref{systoke})$_{6}.$ 

The estimates for the second derivatives of $u$ and for the gradient of $p$ are obtained from regularity results for the Stokes problem with Navier boundary conditions, see  \cite[Theorem 4.5]{AcAmConGh}, along with the following estimates (see Lemma \ref{ledesfnl})
$$
\| \nabla  u \|_{2,\Omega_R} + |a_u| + |b_u| \leq C(\partial \Omega,R_0,\alpha) 
  \| u \|_{{\mathcal V}_R} . 
$$
The Gagliardo-Nirenberg inequality in bounded domains, along with the previously obtained estimates, leads to the remaining estimates.
\end{proof}


Using the previous lemma, we can now establish the existence of special orthonormal bases in ${\mathcal H}_{R}$ and ${\mathcal V}_R$.
\begin{lemma}
\label{tvp}
The problem  
\begin{equation}
\left\{
\begin{aligned}
- \nabla \cdot  {\mathbb T}(w,p)=\lambda w,  \quad 
\nabla \cdot w =0 
\quad \mbox{ in }\Omega_R, \\ 
w \cdot n = w_{\mathcal S} \cdot n \quad \mbox{ on } \partial \Omega, \\ 
2 [{\mathbb D}(w) n] \times n + \alpha w \times n 
= \alpha w_{\mathcal S} \times n \quad \mbox{ on }\partial\Omega, \\ 
w  = 0 \quad \mbox{ on } \partial B_R,  \\ 
 \lambda m  a_w = \int\limits_{\partial\Omega}  {\mathbb T} (w,p) n \dS, \\ 
\lambda J_0  b_w = \int\limits_{\partial\Omega}  x \times {\mathbb T}(w,p) n \dS,
\end{aligned}
\right.
\label{pvp}
\end{equation}
admits a sequence $\{\lambda
_{i}\}_{i \in {\mathbb N}}$ of eigenvalues 
\[
0 < \lambda_1 \leq \lambda_2 \leq \lambda_3 \leq ...
\]
clustering in infinity and the corresponding eigenfunctions
$\{w_{i}\}_{i \in {\mathbb N}} \subset {\mathcal V}_R \cap H^{2}(\Omega_R)^3$ form an orthonormal basis of ${\mathcal H}_R$. Moreover, $\left\{ \Phi_i \right\}_{i \in {\mathbb N}} := \left\{\frac{w_i}{\sqrt{\lambda_i}}\right\}_{i \in {\mathbb N}}$ is an orthonormal basis of ${\mathcal V}_R$.  
\end{lemma}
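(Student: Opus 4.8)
The idea is to interpret \eqref{pvp} as the eigenvalue problem for the solution operator associated with \eqref{systoke} and then apply the spectral theorem for compact self-adjoint operators. By the previous lemma, for each $f\in\mathcal{H}_R$ problem \eqref{systoke} has a unique solution, whose velocity component I denote $Tf:=u\in\mathcal{V}_R$. Reading the right-hand side of the variational identity \eqref{eqpf} as the $\mathcal{H}_R$-scalar product $(f,\varphi)_{\mathcal{H}_R}$ (recall that $f_{\mathcal S}$ is determined by $a_f,b_f$ and that $\mathcal{H}_R$ carries the weights $m$ and $J_0$), one obtains the basic identity
\begin{equation*}
(Tf,\varphi)_{\mathcal{V}_R}=(f,\varphi)_{\mathcal{H}_R},\qquad \forall\,\varphi\in\mathcal{V}_R .
\end{equation*}
Combined with the first estimate of the previous lemma and the continuous embedding $\mathcal{V}_R\hookrightarrow\mathcal{H}_R$ furnished by Lemma \ref{ledesfnl}, this shows that $T:\mathcal{H}_R\to\mathcal{H}_R$ is a bounded linear operator.

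Next I would check that $T$ is compact, self-adjoint and positive. Compactness follows because $T$ maps $\mathcal{H}_R$ boundedly into $\mathcal{V}_R\subset H^1(\Omega_R)^3$ together with a uniform bound on $|a_u|+|b_u|$, and, $\Omega_R$ being bounded, the embedding $\mathcal{V}_R\hookrightarrow\mathcal{H}_R$ is compact by Rellich (the finite-dimensional rigid component being trivially compact). Taking $\varphi=Tg$ and then $\varphi=Tf$ in the basic identity gives $(Tf,g)_{\mathcal{H}_R}=(Tf,Tg)_{\mathcal{V}_R}=(f,Tg)_{\mathcal{H}_R}$, i.e. self-adjointness, while $\varphi=Tf$ gives $(Tf,f)_{\mathcal{H}_R}=\|Tf\|_{\mathcal{V}_R}^2\ge0$. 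Finally $T$ is injective: $Tf=0$ forces $(f,\varphi)_{\mathcal{H}_R}=0$ for all $\varphi\in\mathcal{V}_R$, and since $\mathcal{V}_R$ is dense in $\mathcal{H}_R$ (a standard density property of solenoidal fields, obtained by truncation and mollification once the rigid part is lifted) this yields $f=0$; hence $T$ is positive definite.

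The spectral theorem then provides an orthonormal basis $\{w_i\}_{i\in\mathbb{N}}$ of $\mathcal{H}_R$ of eigenvectors $Tw_i=\mu_iw_i$ with $\mu_1\ge\mu_2\ge\cdots>0$, $\mu_i\to0$; set $\lambda_i:=\mu_i^{-1}$, so that $0<\lambda_1\le\lambda_2\le\cdots\to\infty$. Each $w_i$ is then the solution of \eqref{systoke} with datum $f=\lambda_iw_i\in\mathcal{H}_R$ (and $a_f=\lambda_i a_{w_i}$, $b_f=\lambda_i b_{w_i}$), so the previous lemma supplies a pressure $p_i$, the strong formulation \eqref{pvp}, and the regularity $w_i\in H^2(\Omega_R)^3$. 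For the last assertion, the basic identity with $\varphi=w_j$ gives $(w_i,w_j)_{\mathcal{V}_R}=\lambda_i(w_i,w_j)_{\mathcal{H}_R}=\lambda_i\delta_{ij}$, so $\Phi_i=w_i/\sqrt{\lambda_i}$ is $\mathcal{V}_R$-orthonormal; and if $\varphi\in\mathcal{V}_R$ is $\mathcal{V}_R$-orthogonal to every $\Phi_i$, then $(w_i,\varphi)_{\mathcal{H}_R}=0$ for all $i$, whence $\varphi=0$, so $\{\Phi_i\}$ is complete in $\mathcal{V}_R$.

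The genuine engine, the spectral theorem, is classical; the real work sits in the infrastructure. The main obstacle is to establish the compact embedding $\mathcal{V}_R\hookrightarrow\mathcal{H}_R$ and the density of $\mathcal{V}_R$ in $\mathcal{H}_R$ in the presence of the finite-dimensional rigid coupling and the mixed normal/tangential conditions on $\partial\Omega$, and to make sure that $T$ is self-adjoint with respect to the \emph{weighted} $\mathcal{H}_R$-inner product — it is precisely the two-scalar-product structure inherited from \eqref{eqpf} that makes the rescaled eigenfunctions an orthonormal basis of $\mathcal{V}_R$ as well. The passage from the weak eigenrelation back to the strong system \eqref{pvp} with $H^2$ regularity is a routine repetition of the argument in the previous lemma, relying on the Stokes-with-Navier regularity estimates of \cite{AcAmConGh}.
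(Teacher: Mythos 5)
Your proposal is correct and follows essentially the same route as the paper: define the solution operator of problem \eqref{systoke}, read the variational identity \eqref{eqpf} as $(Tf,\varphi)_{\mathcal{V}_R}=(f,\varphi)_{\mathcal{H}_R}$, verify compactness via Rellich and self-adjointness/positivity via this identity, apply the spectral theorem, set $\lambda_i=1/\mu_i$, and deduce $\mathcal{V}_R$-orthonormality and completeness of $\{\Phi_i\}$ from the weak eigenrelation. You fill in a few details that the paper states without proof (explicit self-adjointness computation, injectivity via density of $\mathcal{V}_R$ in $\mathcal{H}_R$), but the argument is the same in substance.
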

\begin{proof} The operator $\Lambda :f \mapsto u$ associated with the problem \eqref{systoke} is linearly continuous from ${\mathcal H}_R$ to ${\mathcal V}_R$ and since ${\mathcal V}_R$ is compactly embedded in ${\mathcal H}_R,$ $\Lambda$ is a compact operator in ${\mathcal H}_R$. This operator is also self-adjoint and positive definite and, therefore, there is an orthonormal basis $\{w_{i}\}_{i \in {\mathbb N}}$ of ${\mathcal H}_R$ of eigenfunctions associated with the eigenvalues of $\Lambda$. These eigenvalues are positive and form a decreasing sequence $\{ \mu_{i}\}_{i \in {\mathbb N}}$  such that $\mu_i \rightarrow 0 ,$ as $i \rightarrow \infty.$ Therefore, we have
\begin{equation}
\alpha \int_{\partial \Omega} [w_{i} - {(w_{i}})_{\mathcal S}]_{\tau}  \cdot [\varphi - {\varphi}_{\mathcal S}]_{\tau} \dS + 2 \int_{\Omega} {\mathbb D}(w_{i}):{\mathbb D}(\varphi ) \dx =  \lambda_i \bigg( \int_{\Omega _R} w_{Ri} \cdot  \varphi + m {a_{w_{i}}} \cdot a_\varphi + {b_{w_{i}}} \cdot J_0 b_\varphi  \bigg) , 
\label{phpv}
\end{equation}
for all $\varphi \in {\mathcal{V}}_{R}$ where $\lambda_i = 1/ \mu_i.$ Taking $\varphi = w_j$ in \eqref{phpv} yields that $\left\{\frac{w_i}{\sqrt{\lambda_i}}\right\}_{i \in {\mathbb N}}$ is orthonormal in ${\mathcal V}_R$. 

Let $\Phi_{i} := \frac{w_i}{\sqrt{\lambda_i}}$ and suppose that $u \in {\mathcal V}_R$ satisfies $\alpha \int\limits_{\partial \Omega} [\Phi_{i} - {(\Phi_{i}})_{\mathcal S}]_{\tau}  \cdot [u - u_{\mathcal S}]_{\tau}\ dS + 2 \int_{\Omega} {\mathbb D}(\Phi_{i}):{\mathbb D}(u) dx = 0$, for all $i \in \mathbb N$. From \eqref{phpv}  it follows that $\int_{\Omega _R} \Phi_{Ri} \cdot  u dx  + m {a_{\Phi_{i}}} \cdot a_u + {b_{\Phi_{i}}} \cdot J_0 b_u = 0$, for all $i \in \mathbb N$, and since $\{w_{i}\}_{i \in {\mathbb N}}$ is a basis of ${\mathcal H}_R$, we deduce $u \equiv 0$. Therefore, $\left\{\Phi_i\right\}_{i \in {\mathbb N}}$ forms an orthonormal basis of ${\mathcal V}_R$.
\end{proof}

Next, we give a density result for the space ${\mathcal V} $. The functions of ${\mathcal V}_R$ are extended to zero in $\Omega^{R}$.
\begin{lemma}
{$\displaystyle \cup_{R > \delta({\mathcal B}_0)}{\mathcal V}_R$ is dense in ${\mathcal V} $ and $\displaystyle \cup_{R > \delta({\mathcal B}_0)}\{ \Phi_{R,i}, i \in {\mathbb N} \}$ is a basis in ${\mathcal V} .$ Moreover, ${\mathcal V} \subset H^{1,0}_\rho(\Omega)$.}
\label{denseh} 
\end{lemma}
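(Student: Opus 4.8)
The plan is to establish the three claims in sequence: first that the spaces $\mc{V}_R$ form an increasing family whose union is dense in $\mc{V}$; then, using this together with Lemma \ref{ledesfnl} and the Hardy inequality, the continuous inclusion $\mc{V}\subset H^{1,0}_\rho(\Omega)$; and finally the basis property, which will drop out immediately from the density and from Lemma \ref{tvp}.

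\emph{Density.} I would begin by noting that the $\mc{V}_R$ are nested: extending $u\in\mc{V}_R$ by zero to $\Omega^R$ yields, since $u=0$ on $\partial B_R$ in the trace sense, a function in $H^1(\Omega_{R'})^3$ that is divergence free, still satisfies $(u-u_{\mathcal S})\cdot n=0$ on $\partial\Omega$, and vanishes near $\partial B_{R'}$, hence lies in $\mc{V}_{R'}$ for every $R'\ge R$; moreover $\|u\|_{\mc{V}_{R'}}=\|u\|_{\mc{V}}$ because $\mathbb{D}(u)\equiv 0$ outside $B_R$. Given $u\in\mc{V}$ with associated rigid field $u_{\mathcal S}=a_u+b_u\times x$, I would choose a radial cut-off $\chi_R\in\mc{D}(\mathbb{R}^3)$ with $\chi_R\equiv 1$ on $B_{R/2}$, $\mathrm{supp}\,\chi_R\subset B_R$ and $|\nabla\chi_R|\le C/R$ supported in the annulus $A_R:=B_{R/2,R}\subset\Omega$, and repair the divergence of $\chi_R u$ by a Bogovskii potential $z_R\in W^{1,2}_0(A_R)^3$ solving $\nabla\cdot z_R=\nabla\chi_R\cdot u$ in $A_R$; the required compatibility $\int_{A_R}\nabla\chi_R\cdot u\,\dx=0$ follows from $\nabla\cdot u=0$ and $\int_{\partial\Omega}u\cdot n\,\dS=\int_{\partial\Omega}u_{\mathcal S}\cdot n\,\dS=\int_{\mc{B}_0}\nabla\cdot u_{\mathcal S}\,\dx=0$. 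The approximant $w_R:=\chi_R u-z_R$ (extended by zero) is then divergence free, vanishes near $\partial B_R$, and equals $u$ near $\partial\Omega$, so $w_R\in\mc{V}_R$ with the same associated rigid field $u_{\mathcal S}$; consequently the boundary term in $\|u-w_R\|_{\mc{V}}$ vanishes and $\|u-w_R\|_{\mc{V}}^2\lesssim \|(1-\chi_R)\nabla u\|_{2,\Omega}^2+\|\nabla\chi_R\otimes u\|_{2,A_R}^2+\|\nabla z_R\|_{2,A_R}^2$.

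The step I expect to be the main obstacle is driving these three error terms to zero using only that $u\in H^{1,0}_\rho(\Omega)$, i.e. $\nabla u\in L^2(\Omega)$ and $u/\rho\in L^2(\Omega)$. The first term vanishes because $\nabla u\in L^2$. For the second and third I would exploit that $|\nabla\chi_R(x)|\le C/R\le C'/\rho(x)$ on $A_R$, giving $\|\nabla\chi_R\otimes u\|_{2,A_R}\le C\|u/\rho\|_{2,A_R}$ and, once the uniformity of the Bogovskii constant is in hand, $\|\nabla z_R\|_{2,A_R}\le C_B\|\nabla\chi_R\cdot u\|_{2,A_R}\le C\|u/\rho\|_{2,A_R}$; both bounds tend to $0$ since $u/\rho\in L^2(\Omega)$ and the annuli $A_R$ recede to infinity. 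The delicate point is precisely the uniformity of $C_B$: I would obtain it from the dilation $x\mapsto x/R$, which maps $A_R$ onto the fixed annulus $B_{1/2,1}$ and leaves the homogeneous gradient estimate for the Bogovskii operator scale invariant. Combining the weight $\rho$, the cut-off length scale, and this scaling consistently is the heart of the argument.

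\emph{Continuous inclusion and basis.} For $u\in\mc{V}_R$ (extended by zero), Lemma \ref{ledesfnl} gives $\|\nabla u\|_{2,\Omega}=\|\nabla u\|_{2,\Omega_R}\le C_1(\partial\Omega)\|u\|_{\mc{V}}$ with $C_1$ independent of $R$; applying this to the sequence $w_R$ above, which satisfies $\nabla w_R\to\nabla u$ in $L^2(\Omega)$ and $\|w_R\|_{\mc{V}}\to\|u\|_{\mc{V}}$, yields $\|\nabla u\|_{2,\Omega}\le C_1\|u\|_{\mc{V}}$, and then the Hardy inequality in $H^{1,0}_\rho(\Omega)$ gives $\|u\|_{H^{1,0}_\rho(\Omega)}\le C\|\nabla u\|_{2,\Omega}\le CC_1\|u\|_{\mc{V}}$, i.e. $\mc{V}\subset H^{1,0}_\rho(\Omega)$ with continuous inclusion. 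Finally, by Lemma \ref{tvp} each $\{\Phi_{R,i}\}_{i\in\N}$ is an orthonormal basis of $\mc{V}_R$, so its span is $\|\cdot\|_{\mc{V}}$-dense in $\mc{V}_R$; taking the union over $R>\delta(\mc{B}_0)$ and invoking the density just proved, the finite linear combinations of $\bigcup_R\{\Phi_{R,i}\}$ are dense in $\mc{V}$, which is the completeness (``basis'') property required for the Galerkin scheme.
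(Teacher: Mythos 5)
Your proposal is correct and fills in precisely the step the paper leaves unjustified. The paper's proof simply \emph{asserts} that for each $u\in\mathcal V$ there exists a sequence $\{\varphi_k\}\subset C^\infty_0(\Omega)$ of divergence-free functions converging to $u$ in $\mathcal V$ with $\varphi_k\cdot n=(\varphi_k)_{\mathcal S}\cdot n$; it gives no construction, and in fact its notation raises a concern: a divergence-free $\varphi_k$ compactly supported in the \emph{open} set $\Omega$ vanishes near $\partial\Omega$, which forces $(\varphi_k)_{\mathcal S}=0$ and kills the boundary part $\alpha\|[\varphi_k-(\varphi_k)_{\mathcal S}]_\tau\|_{2,\partial\Omega}^2$, so $\varphi_k\to u$ in $\mathcal V$ would then require $[u-u_{\mathcal S}]_\tau=0$ on $\partial\Omega$, which does not hold in general. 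Your cutoff-plus-Bogovskii construction is the natural repair: by choosing $w_R:=\chi_R u-z_R$ with $\chi_R\equiv 1$ near $\partial\Omega$ and $z_R$ supported in the far annulus, you ensure $w_R\equiv u$ near $\partial\Omega$ (so $(w_R)_{\mathcal S}=u_{\mathcal S}$ and the boundary term in $\|u-w_R\|_{\mathcal V}$ vanishes exactly), and the three remaining error terms tend to zero using only $\nabla u\in L^2$, $u/\rho\in L^2$, and the scale-invariance of the Bogovskii constant under the dilation $A_R=R\,B_{1/2,1}$; the compatibility $\int_{A_R}\nabla\chi_R\cdot u=0$ indeed follows from $\nabla\cdot u=0$, $(u-u_{\mathcal S})\cdot n=0$, and $\nabla\cdot u_{\mathcal S}=0$ (with a sign discrepancy in your identity $\int_{\partial\Omega}u_{\mathcal S}\cdot n\,\dS=\int_{\mathcal B_0}\nabla\cdot u_{\mathcal S}\,\dx$, which should carry a minus sign under the paper's convention that $n$ points into $\mathcal B_0$, but both sides vanish so nothing breaks). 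For the basis and the continuous inclusion $\mathcal V\subset H^{1,0}_\rho(\Omega)$ you take the same route as the paper in substance: density plus Lemma~\ref{tvp} gives the basis, and your derivation via Lemma~\ref{ledesfnl} and the Hardy inequality is equivalent to the paper's citation of Galdi's Theorem~II.6.1, just more self-contained.
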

\begin{proof} {Given $u \in  {\mathcal V}$, there exists an increasing and unbounded sequence $\{ R_k \}_{k \in {\mathbb N}}$ and a sequence $\{\varphi_k\}_{k \in {\mathbb N}} \subset C^\infty_0(\Omega)$ of divergence free functions converging to $u$ in ${\mathcal V}$ such that, for each $k \in \mathbb N$, $\textrm{supp } \varphi_k \subset \overline{\Omega_{R_k}},$ $\varphi_k=0$ on $\partial B_{R_k}$, and $\varphi_k \cdot n =(\varphi_k)_{\mathcal S} \cdot n$ for some $(\varphi_k)_{\mathcal S} \in {\mathcal R}$. It is clear that $\varphi_k \in {\mathcal V}_{R_k}$ and therefore we found a sequence of functions in $\displaystyle \cup_{R > \delta({\mathcal B}_0)}{\mathcal V}_R$ converging to $u$ in  ${\mathcal V}$. The fact that $\displaystyle \cup_{R > \delta({\mathcal B}_0)}\{ \Phi_{R,i}, i \in {\mathbb N} \}$ is a basis in ${\mathcal V} $ follows from Lemma \ref{tvp}. Finally, ${\mathcal V} \subset H^{1,0}_\rho(\Omega)$ is a consequence of  \cite[Theorem II.6.1]{G}.} \end{proof}

\medskip 

{From now on, we will use the following simplified notations:
\[
u \cdot \nabla v  = (u \cdot \nabla) v,  \qquad  u \cdot \nabla v \cdot w = [(u \cdot \nabla) v ] \cdot w,  \qquad  u \cdot (\nabla v)^\top \cdot w = [(w \cdot \nabla) v ] \cdot u.
\]}
\subsection{Extension of the normal component of the boundary values}
Suppose there is an extension $\widetilde{[v_*]_n} \in H^1( \Omega)^3$ of $[v_*]_n:=(v_* \cdot n)n$ with the property that $\nabla \cdot \widetilde{[v_*]_n}   =  0$ in $\Omega$. Then the velocity can be decomposed as  $v = u + \widetilde{[v_*]_n}$ such that $(u-V) \cdot n = 0$ on $\partial\Omega$. Let us set the following notations 
\begin{equation*}\label{notation1}
 \Phi=\int_{\partial \Omega}  v_* \cdot n\ dS \not=0,\quad {\mathbb W}(u)  := \frac 12 (\nabla u - (\nabla u)^\top),
\end{equation*} 
and 
{
\begin{equation*}\label{notation2}
\sigma(x)= - \frac{x - x_0}{4 \pi |x-x_0|^3}, \qquad 
x_0 =
\begin{cases}
0, & \text{when } 0 \notin \Omega,\\[4pt]
\text{any point in }\mathrm{Int}(\mc B_0), & \text{when } 0 \in \Omega.
\end{cases}
\end{equation*}}
Precisely, we have the following extension result.
{
\begin{lemma}
\label{lemaext}
Let $v_{*}\cdot n \in H^{\frac{1}{2}}(\partial \Omega)$. There exists an extension $\widetilde{[v_*]_n}  \in H^1( \Omega)^3$ of $(v_{*}\cdot n )n$ satisfying 
\begin{align*}
 \begin{cases}
  \nabla \cdot \widetilde{[v_*]_n}   =  0 \ &\text{ in }  \Omega, \\
 \widetilde{[v_*]_n}  =  (v_* \cdot n) n\  &\text{ on }  \partial \Omega
    \end{cases}
    \end{align*}
    and the estimate
    \begin{equation}
\| \widetilde{[v_*]_n}  \|_{1,2,\Omega} \leq C(\Omega)\| v_{*} \cdot n \|_{1/2,2,\partial \Omega}.
\label{esttvn}
\end{equation}
Moreover, 
\begin{multline}\label{est:extension}
 2 \,  \mathrm{Re}  \left|  \int_{\Omega_R} (u - V) \cdot {\mathbb W}(u) \cdot \widetilde{[v_*]_n} \dx  \right| \\
\leq 
\mathrm{Re}  \, C_1({\mathcal B}_0) \| (v_{*}\cdot n )n - \Phi \sigma|_{\partial\Omega} \|_{1/2,\partial\Omega}\| u \|_{{\mathcal V}_R}^2 + 
\mathrm{Re}   \, C_2({\mathcal B}_0) | \Phi  |   \| u \|_{{\mathcal V}_R}^2 , \, \forall\ u \in {\mathcal V}_R,
\end{multline}
where the constants $C_i({\mathcal B}_0)$ are independent of $R$, and $V=u_{\mathcal S} \in {\mathcal R}$.
\end{lemma}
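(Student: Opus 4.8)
The plan is to construct the extension $\widetilde{[v_*]_n}$ in two pieces. First, I would isolate the part of the boundary datum $(v_*\cdot n)n$ that carries the flux $\Phi$: the field $\Phi\sigma$, where $\sigma(x)=-\frac{x-x_0}{4\pi|x-x_0|^3}$ is (up to sign) a point source/sink, is divergence-free in $\Omega$ and already defined on all of $\mathbb R^3\setminus\{x_0\}$, with $\int_{\partial\Omega}\sigma\cdot n\,dS=1$ by the divergence theorem since $x_0\notin\Omega$ (or $x_0\in\mathrm{Int}(\mathcal B_0)$). Hence $\Phi\sigma$ is a natural flux carrier. The remainder $(v_*\cdot n)n-\Phi\sigma|_{\partial\Omega}$ has \emph{zero} net flux across $\partial\Omega$, so it can be extended into $\Omega$ by solving a Bogovskii-type (or exterior Stokes/divergence) problem: there is $w\in H^1(\Omega)^3$ with $\nabla\cdot w=0$ in $\Omega$, $w=(v_*\cdot n)n-\Phi\sigma$ on $\partial\Omega$, $w$ supported in a bounded neighborhood of $\partial\Omega$ (say in $\Omega_{R_0}$ for a fixed $R_0$), and $\|w\|_{1,2,\Omega}\le C(\Omega)\|(v_*\cdot n)n-\Phi\sigma|_{\partial\Omega}\|_{1/2,\partial\Omega}\le C(\Omega)\|v_*\cdot n\|_{1/2,\partial\Omega}$. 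Setting $\widetilde{[v_*]_n}:=w+\Phi\sigma$ (the latter possibly multiplied by a smooth cutoff equal to $1$ near $\partial\Omega$ and adjusted to remain divergence-free, or simply kept as $\Phi\sigma$ since it is already globally divergence-free and decays like $|x|^{-2}\in H^1$ away from $x_0$) gives a divergence-free $H^1(\Omega)^3$ extension with the stated boundary values, and \eqref{esttvn} follows by the triangle inequality together with $\|\sigma|_{\Omega}\|_{1,2,\Omega}\le C(\mathcal B_0)$ and $|\Phi|\le C\|v_*\cdot n\|_{1/2,\partial\Omega}$.

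For the quantitative bound \eqref{est:extension}, I would expand $\widetilde{[v_*]_n}=w+\Phi\sigma$ in the integrand and estimate the two resulting terms separately. Write $I_R:=\int_{\Omega_R}(u-V)\cdot{\mathbb W}(u)\cdot\widetilde{[v_*]_n}\,dx$. For the $w$-part, since $w$ is supported in $\Omega_{R_0}$ (a fixed bounded set), Hölder's inequality gives $|\int_{\Omega_R}(u-V)\cdot{\mathbb W}(u)\cdot w\,dx|\le \|u-V\|_{6,\Omega_{R_0}}\|{\mathbb W}(u)\|_{2,\Omega_{R_0}}\|w\|_{3,\Omega_{R_0}}$; using the Sobolev embedding $H^1(\Omega_{R_0})\hookrightarrow L^6$, the interpolation $\|w\|_{3}\le C\|w\|_2^{1/2}\|w\|_6^{1/2}\le C\|w\|_{1,2,\Omega}$, the Korn-type bounds of Lemma \ref{ledesfnl} (which control $\|u\|_{6,\Omega_R}$, $\|\nabla u\|_{2,\Omega_R}$, $|a_u|$, $|b_u|$ — hence $\|u-V\|_{6,\Omega_{R_0}}$ — by $\|u\|_{\mathcal V_R}$, uniformly in $R$), one arrives at a bound by $C(\mathcal B_0)\|(v_*\cdot n)n-\Phi\sigma|_{\partial\Omega}\|_{1/2,\partial\Omega}\|u\|_{\mathcal V_R}^2$. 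For the $\Phi\sigma$-part, $|\Phi|$ factors out and one needs $|\int_{\Omega_R}(u-V)\cdot{\mathbb W}(u)\cdot\sigma\,dx|\le C(\mathcal B_0)\|u\|_{\mathcal V_R}^2$; here $\sigma\in L^3_{\mathrm{weak}}$ globally but is \emph{not} compactly supported, so one uses a Hardy-type inequality: $\sigma(x)$ behaves like $|x-x_0|^{-2}$, so $\||x-x_0|\,\sigma\|_{\infty,\Omega}\le C$, and $|\int_{\Omega_R}(u-V)\cdot{\mathbb W}(u)\cdot\sigma\,dx|\le C\int_{\Omega_R}\frac{|u-V|}{\rho}\,|\nabla u|\,dx\le C\|\frac{u-V}{\rho}\|_{2,\Omega_R}\|\nabla u\|_{2,\Omega_R}$, with the first factor controlled via the Hardy inequality in $H^{1,0}_\rho$ applied to $u$ (and the growth of $V=a_u+b_u\times x$ absorbed using $|a_u|+|b_u|\le C\|u\|_{\mathcal V_R}$, noting $\||x|/\rho\|_\infty\le 1$ so that $\|\frac{b_u\times x}{\rho}\|_{2,\Omega_R}$ is controlled on bounded-in-$R$ remainders after splitting $\Omega_R=\Omega_{R_0}\cup\Omega_{R_0,R}$ and using that $u=u_{\mathcal S}$ only in the $n$-direction — in fact it is cleaner to estimate $\|\frac{u}{\rho}\|_{2}\le C\|\nabla u\|_2$ by Hardy and handle the $V$-contribution separately as $|a_u|\|\frac{1}{\rho}\sigma\|\cdot\ldots$, all bounded by $\|u\|_{\mathcal V_R}^2$). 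Collecting the two pieces yields \eqref{est:extension} with $C_1,C_2$ independent of $R$.

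The main obstacle I anticipate is the $\Phi\sigma$-term: because $\sigma$ decays only like $|x|^{-2}$ and is not integrable against arbitrary $H^1$ functions without weights, one cannot simply use compact support. The right tool is the Hardy inequality available in $H^{1,0}_\rho(\Omega)$ (stated in the excerpt), combined with the pointwise bound $|\sigma(x)|\le C/\rho(x)^2$, which lets $\sigma$ "eat" one power of $\rho$ from the $(u-V)$ factor, leaving $\|\frac{u-V}{\rho}\|_{2,\Omega_R}$ times $\|\nabla u\|_{2,\Omega_R}$. A secondary delicate point is that $u-V=u-u_{\mathcal S}$ does \emph{not} vanish at $\partial\Omega$ (only its normal component does), and $u_{\mathcal S}$ grows linearly, so one must carefully split the domain into a fixed bounded part near the body (where elliptic/Sobolev estimates and Lemma \ref{ledesfnl} apply directly) and an exterior part (where $V$ is handled via $|a_u|,|b_u|\lesssim\|u\|_{\mathcal V_R}$ and the decay of $\sigma$), keeping every constant independent of $R$. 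Uniformity in $R$ is guaranteed precisely because all the structural inequalities invoked — the Korn-type bounds of Lemma \ref{ledesfnl} and the Hardy inequality — have $R$-independent constants.
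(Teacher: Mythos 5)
Your construction of the extension is the same as the paper's: split the boundary datum into a compactly supported zero-flux part (your $w$, the paper's $\vartheta$) plus the flux carrier $\Phi\sigma$, and sum the two solenoidal extensions. The estimate \eqref{esttvn} and the bound for the compactly supported piece are both fine and coincide with the paper's treatment of $I_1$.

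The genuine gap is in the $\Phi\sigma$-contribution, and specifically in how you propose to handle the rotational part $b_u\times x$ of $V$. You write that $\||x|/\rho\|_\infty\le 1$ lets you control $\|\,(b_u\times x)/\rho\,\|_{2,\Omega_R}$, but that quantity is of order $|b_u|\,|\Omega_R|^{1/2}\sim |b_u|\,R^{3/2}$: it grows with $R$, so no splitting $\Omega_R=\Omega_{R_0}\cup\Omega_{R_0,R}$ can save the argument. The underlying obstruction is dimensional: $b_u\times x$ grows like $|x|$ while $\sigma$ decays only like $|x|^{-2}$, so the product decays like $|x|^{-1}$, which is not in $L^q(\Omega)$ for any $q\le 3$; multiplying by $|\nabla u|\in L^2$ and integrating over the whole exterior domain cannot give an $R$-uniform bound by Hölder/Hardy alone. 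In fact the paper remarks precisely that Hardy-inequality arguments in the style of Galdi's Lemma IX.4.2 are \emph{not} available here because $[u-V]_\tau\neq 0$.

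What the paper actually does to close this gap is an algebraic, not analytic, step. After integrating $I_2$ by parts, the dangerous term appears as $\int[(\omega\times x)\cdot\nabla\sigma-\sigma\cdot\nabla(\omega\times x)]\cdot u$, and the paper invokes the rotational-equivariance identity for the radial field $\sigma$ about its singularity $x_0$:
\begin{equation*}
\bigl(\omega\times(x-x_0)\bigr)\cdot\nabla\sigma(x)-\omega\times\sigma(x)=0,
\end{equation*}
which, together with \eqref{id}, rewrites that term as $\int(\omega\times x_0)\cdot\nabla\sigma\cdot u$ — the linearly growing factor $\omega\times x$ has been replaced by the \emph{constant} $\omega\times x_0$. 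The remaining integrals are then estimated using only the summability $\sigma\in L^q(\Omega)$ for $q>3/2$, $\nabla\sigma\in L^q(\Omega)$ for $q>1$, Sobolev embedding $\|u\|_{6}\lesssim\|u\|_{\mathcal V_R}$, and the Korn-type bounds of Lemma~\ref{ledesfnl}, with no Hardy inequality and no weight appearing. Without this cancellation identity, the estimate for $I_2$ does not go through, so your proof has a real hole at exactly the point you flagged as "the main obstacle."
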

    \begin{proof} Let us define $\beta_* \in H^{\frac{1}{2}}(\partial \Omega)^3$ by
$\beta_*:=  (v_{*} \cdot n) n  -  \Phi \sigma|_{\partial \Omega}$.  Recall the Laplace fundamental solution
\[
{\mathcal E} (x) := \frac{1}{4 \pi |x|}, \qquad -\Delta {\mathcal E}  = \delta_0 \text{ in } {\mathbb R}^3.
\]
We have
 $$\nabla \cdot \sigma(x) =  \Delta {\mathcal E} (x-x_0) = - \delta_{x_0}(x)  \implies \int_{\partial \Omega} \sigma \cdot n \,
\dS = 1$$ and therefore 
 \begin{equation*}\label{lem4.3-1}
 \int_{\partial \Omega} \beta_* \cdot n \dS = 0 \text{ and } \nabla \cdot \left(  \Phi \sigma \right) =0 \text{ in }\Omega.
 \end{equation*} 
 
 For fixed $R_0 > \delta(\mathcal{B}_0)$, consider $$
 \vartheta (x) = \begin{cases}  v_{\Omega_{R_0}}(x) , & \quad x \in \Omega_{R_0},  \\  0, & \quad x \in \overline{B^{R_0}}, \end{cases}
 $$
where $v_{\Omega_{R_0}} \in H^1(\Omega_{R_0})$ is a solenoidal extension of $\beta_*$ in $\Omega_{R_0}$ with $v_{\Omega_{R_0}}|_{\partial B_{R_0}}=0$. Then $\vartheta \in H^1(\Omega)$,   $\nabla \cdot \vartheta  = 0 $ in $\Omega$, and the following estimates are valid
\[
\| \vartheta \|_{1,2,\Omega} \leq C(\partial \Omega, R_0) \|  \beta_* \|_{1/2,2,\partial \Omega} \leq C(\partial \Omega, R_0) \|  v_* \|_{1/2,2,\partial \Omega}.
\]
The desired extension of $(v_{*} \cdot n) n $ is given by:
\[
 \widetilde{[v_*]_n}  = \vartheta  +  \Phi \sigma
\]
and it clearly satisfies estimate \eqref{esttvn} because $\sigma \in H^1(\Omega)^3$.

For $R > R_0$, we have for  $u \in {\mathcal V}_R$, $V=u_{\mathcal S} \in {\mathcal R}$,
\begin{align*}
\int_{\Omega_R}  (u  - V) \cdot {\mathbb W}(u) \cdot \widetilde{[v_*]_n} \dx 
 &=   \int_{\Omega_{R_0}}   (u - V) \cdot {\mathbb W}(u)  \cdot  \vartheta \dx +  \, \Phi   \int_{\Omega_R} (u - V)  \cdot {\mathbb W}(u)  \cdot \sigma \dx \\ &=: I_1 + I_2.
\end{align*}

Using H\"{o}lder inequality, the estimate for $\vartheta$ and Korn inequality, we obtain
\begin{equation}\label{est:I1}
\left| I_1 \right|  
 \leq C_1(\Omega_{R_0}) \| (v_{*}\cdot n )n - \Phi \sigma|_{\partial\Omega} \|_{1/2,\partial\Omega}\| u \|_{{\mathcal V}_R}^2. 
\end{equation}    

By direct calculations, one finds 
\begin{equation}
\sigma \in L^q(\Omega)^3, \, q > 3/2 \text{ and } \nabla \sigma \in L^q(\Omega)^{3 \times 3}, \, q>1,
\label{sumsig}
\end{equation}
and
\begin{equation}
(\omega \times (x-x_0))  \cdot \nabla  \sigma(x) -  \omega \times \sigma(x) = 0 .
\label{divsig}
\end{equation}

After integration by parts and using the relation \eqref{divsig}, the integral $I_2$ can be written as
\[
\begin{aligned} 
I_2 
 =& \,
 \frac 12  \int_{\Omega_R} (u - V)  \cdot \nabla u  \cdot \sigma \dx -  \frac 12  \int_{\Omega_R} \sigma \cdot  \nabla u \cdot (u - V)  \dx\\
 =&
  - \frac 12   \int_{\Omega_R} u \cdot \nabla  \sigma \cdot u \dx  + \frac 12   \int_{\Omega_R}   \sigma \cdot \nabla u \cdot u \dx \\
& + \frac 12   \int_{\Omega_R} \xi \cdot \nabla  \sigma \cdot u \dx + \frac 12   \int_{\Omega_R} [ (\omega \times x)  \cdot \nabla  \sigma -  \sigma \cdot \nabla (\omega \times x) ] \cdot u \dx \\
= & - \frac 12   \int_{\Omega_R} u \cdot \nabla  \sigma \cdot u \dx  + \frac 12   \int_{\Omega_R}   \sigma \cdot \nabla u \cdot u \dx + \frac 12   \int_{\Omega_R} (\xi + \omega \times x_0) \cdot \nabla  \sigma \cdot u \dx.
 \end{aligned}
\]
Using \eqref{sumsig} and Lemma \ref{ledesfnl}, we have  
\begin{equation}\label{est:I2}
 \left| I_2 \right|  \leq C_2(\Omega) | \Phi  |   \| u \|_{{\mathcal V}_R}^2.
\end{equation}
By combining, \eqref{est:I1}--\eqref{est:I2}, we obtain the desired estimate \eqref{est:extension}.
\end{proof}}
\begin{remark}
This extension of the boundary values $(v_* \cdot n)n$ does not require $\Phi=0$, as was the case in \cite[Theorem 5.1]{Galdi1999}. {However, using only the boundary condition $(u-V)\cdot n = 0$, we were not able to solve the more general problem: for a given $\gamma$, construct an extension $\widetilde{[v_*]_n} $ of $[v_*]_n$ satisfying: 
\[
2 \,  \mathrm{Re}  \left|  \int_{\Omega_R} (u - V) \cdot {\mathbb W}(u) \cdot \widetilde{[v_*]_n} \dx  \right| 
\leq 
\gamma + 
\mathrm{Re}   \, C({\mathcal B}_0) | \Phi  |   \| u \|_{{\mathcal V}_R}^2 , \, \forall\ u \in {\mathcal V}_R.
\] 
The difficulty lies in the fact that $[u-V]_\tau \not= 0$, which prevents the use of Hardy inequality as in the proof of \cite[Lemma IX.4.2, (IX.4.42)-(IX.4.43)]{G}.}
\end{remark}
\subsection{Approximating solutions and existence of a weak solution}
In this section, we will present the proof of Theorem \ref{Mainresult3}.
We consider a sequence of bounded domains $\Omega_R:= \Omega \cap B_R$ that invade $\Omega$. In each domain $\Omega_R$, with $R$ fixed, we utilize the basis 
$\{ \Phi_{R,i}, i\in {\mathbb N}\}$ of ${\mathcal V}_R$, as introduced in Lemma \ref{tvp}. 
\begin{proof}[Proof of Theorem \ref{Mainresult3}]
We can use the decomposition of velocity $v = u + \widetilde{[v_*]_n}$ such that $(u-V) \cdot n = 0$ on $\partial\Omega$ and rewrite the weak formulation \eqref{weakform} in the following way 
\begin{equation}
\label{re:weakform}
\begin{aligned}
 & 2\int_{\Omega_{R}} {\mathbb D}({u}): {\mathbb D}({\varphi})\dx + \alpha \int_{\partial \Omega} [u-V]_{\tau}  \cdot [\varphi-\varphi_{\mathcal S}]_{\tau}\dS \\
 =  & \,  \mathrm{Re} \int_{\Omega_{R}}(u-V) \cdot \nabla \varphi \cdot u \dx -  \mathrm{Re}  \int_{\Omega_{R}} ( \omega \times u ) \cdot \varphi \dx  + \mathrm{Re} \, m\xi\times\omega \cdot a_\varphi  \\
 &  + \mathrm{Re} \, J_0\omega\times\omega \cdot b_\varphi 
  +  \mathrm{Re} \int_{\Omega_{R}} \widetilde{[v_*]_n} \cdot \nabla \varphi \cdot u \dx + \mathrm{Re} \int_{\Omega_{R}}(u-V) \cdot \nabla \varphi \cdot \widetilde{[v_*]_n} \dx \\
 & - \mathrm{Re} \int_{\Omega_{R}} ( \omega \times \widetilde{[v_*]_n}) \cdot \varphi \dx - \mathrm{Re} \int_{\partial \Omega} (v_{*}\cdot n)[u]_{\tau}  \cdot [\varphi-\varphi_{\mathcal S}]_{\tau} \dS 
  + \mathrm{Re} \int_{\Omega_{R}} \widetilde{[v_*]_n} \cdot \nabla \varphi \cdot \widetilde{[v_*]_n} \dx \\ 
  & - 2\int_{\Omega_{R}} {\mathbb D}(\widetilde{[v_*]_n}): {\mathbb D}({\varphi})\dx  
  +  \alpha \int_{\partial \Omega} [v_*]_{\tau}  \cdot [u - V]_{\tau}\dS , \quad \varphi \in {\mathcal C}(\Omega).
 \end{aligned}
\end{equation}
 An approximating solution will be constructed in the bounded domain $\Omega_R$, $R > \delta ({\mathcal B}_0)$, using the Galerkin method. The sequence of approximating solutions $\{(u_{R}^{(k)} , \xi_{R}^{(k)}, \omega_{R}^{(k)})\}_{k \in {\mathbb N}}$ is defined by
\begin{equation*} 
u_{R}^{(k)}(x)= \sum _{i=1}^{k} c_{R,i}^{(k)} \Phi_{R,i} (x), \quad \xi_{R}^{(k)}= \sum _{i=1}^{k}  c_{R,i}^{(k)} a_{\Phi_{R,i}}, \quad \omega_{R}^{(k)} = \sum _{i=1}^{k}   c_{R,i}^{(k)} b_{\Phi_{R,i}}, \quad V_{R}^{(k)}(x) = \xi_{R}^{(k)} + \omega_{R}^{(k)} \times x =  (u_{R}^{(k)})_{\mathcal S}.
\end{equation*}
To simplify the notation, we omit the dependence on $R$ in what follows, that is, we write $\Phi_{i}:=\Phi_{R,i}$, $c_{i}^{(k)}:=c_{R,i}^{(k)}$, and $u^{(k)} := 
u_{R}^{(k)}$, $\xi^{(k)} :=\xi_{R}^{(k)}$, $\omega_{k}:=\omega_{R}^{(k)}$. Motivated by \eqref{re:weakform}, the coefficients $c_{k}^{(i)}$, $i=1,...,k$, are determined by solving the following nonlinear system: for $j=1,...,k$, 
\begin{equation}
\label{sistGR}
\begin{aligned}
  & 2\int_{\Omega} {\mathbb D}({u^{(k)}}): {\mathbb D}({\Phi_{j}})\dx + \alpha \int_{\partial \Omega} [u^{(k)} - V^{(k)} ]_{\tau}  \cdot [{\Phi_{j}} - ({\Phi_{j}})_{\mathcal S}]_{\tau} \dS \\
=  & \,  \mathrm{Re} \int_{\Omega}(u^{(k)} - V^{(k)}) \cdot \nabla {\Phi_{j}} \cdot u^{(k)} \dx  -  \mathrm{Re}  \int_{\Omega} ( \omega^{(k)} \times u^{(k)} ) \cdot {\Phi_{j}} \dx
  + \mathrm{Re} \, m \, \xi^{(k)} \times \omega^{(k)} \cdot a_{{\Phi_{j}}}  \\
 &  + \mathrm{Re} \, J_0 \omega^{(k)} \times \omega^{(k)} \cdot b_{{\Phi_{j}}}
  + \mathrm{Re} \int_{\Omega} \widetilde{[v_*]_n} \cdot \nabla {\Phi_{j}} \cdot u^{(k)} \dx  + \mathrm{Re} \int_{\Omega}(u^{(k)} - V^{(k)} ) \cdot \nabla {\Phi_{j}} \cdot \widetilde{[v_*]_n} \dx \\
 &  - \mathrm{Re} \int_{\Omega} ( \omega_k \times \widetilde{[v_*]_n}) \cdot {\Phi_{j}} \dx - \mathrm{Re} \int_{\partial \Omega} (v_{*}\cdot n)[u]_{\tau}  \cdot [{\Phi_{j}} - ({\Phi_{j}})_{\mathcal S}]_{\tau} \dS \\
  & + \mathrm{Re} \int_{\Omega} \widetilde{[v_*]_n} \cdot \nabla {\Phi_{j}} \cdot \widetilde{[v_*]_n} \dx   - 2 \int_{\Omega} {\mathbb D}(\widetilde{[v_*]_n}): {\mathbb D}({\Phi_{j}}) \dx   
  +  \alpha \int_{\partial \Omega} [v_*]_{\tau}  \cdot [{\Phi_{j}} - ({\Phi_{j}})_{\mathcal S}]_{\tau}  \dS.
\end{aligned}
\end{equation}
Let $F:{\mathbb R}^k \rightarrow {\mathbb R}^k$ be defined by
\begin{align*} 
& F_j(c^{(k)}) = F_j(c_{1}^{(k)},...,c_{k}^{(k)}) \\
= & \,  2 \int_{\Omega} {\mathbb D}({u^{(k)}}): {\mathbb D}({\Phi_{j}})\ dx + \alpha \int_{\partial \Omega} [u^{(k)} -V^{(k)} ]_{\tau}  \cdot [{\Phi_{j}} - ({\Phi_{j}})_{\mathcal S}]_{\tau}\ dS  \\
&   - \mathrm{Re} \int_{\Omega}(u^{(k)} - V^{(k)}) \cdot \nabla {\Phi_{j}} \cdot u^{(k)} dx  +  \mathrm{Re}  \int_{\Omega} ( \omega^{(k)} \times u^{(k)} ) \cdot{\Phi_{j}} dx
  - \mathrm{Re} \, m \, \xi^{(k)} \times \omega^{(k)} \cdot a_{{\Phi_j}} \\ 
  & - \mathrm{Re} \, J_0 \omega^{(k)} \times \omega^{(k)} \cdot b_{{\Phi_j}} 
  -  \mathrm{Re} \int_{\Omega} \widetilde{[v_*]_n} \cdot \nabla {\Phi_j} \cdot u^{(k)} dx - \mathrm{Re} \int_{\Omega}(u^{(k)} - V^{(k)}) \cdot \nabla {\Phi_j} \cdot \widetilde{[v_*]_n} dx \\
 &  + \mathrm{Re} \int_{\Omega} ( \omega_k \times \widetilde{[v_*]_n}) \cdot {\Phi_j} dx + \mathrm{Re} \int_{\partial \Omega} (v_{*}\cdot n)[u]_{\tau}  \cdot [{\Phi_j} - ({\Phi_j})_{\mathcal S}]_{\tau}\dS  - \mathrm{Re} \int_{\Omega} \widetilde{[v_*]_n} \cdot \nabla {\Phi_j} \cdot \widetilde{[v_*]_n} dx\\ 
 & + 2 \int_{\Omega} {\mathbb D}(\widetilde{[v_*]_n}): {\mathbb D}({\Phi_j})\ dx 
  -  \alpha \int_{\partial \Omega} [v_*]_{\tau}  \cdot [{\Phi_j} - ({\Phi_j})_{\mathcal S}]_{\tau}\  dS, \quad j =1,...,k.
\end{align*}
so that 
\[
F(c^{(k)}) = 0 \iff c^{(k)} \text{ solves system } \eqref{sistGR}.
\]

Taking into account the identities  
\[
\omega^{(k)} \times u^{(k)} \cdot u^{(k)} = 0, \qquad \xi^{(k)} \times \omega^{(k)} \cdot \xi^{(k)}= 0 = (J_0 \omega^{(k)}) \times \omega^{(k)} \cdot \omega^{(k)} ,
\]
and integrating by parts, we get
\[
\begin{aligned}
 F(c^{(k)}) \cdot c^{(k)}
=  & \, 2 \| {\mathbb D}(u^{(k)})\|_{2,\Omega_R}^2 + \alpha \|  [u^{(k)} - V^{(k)}]_{\tau} \|^2_{2,\partial \Omega} 
 +  2  {\mathrm{Re}} \int_{\Omega_R}  \widetilde{[v_*]_n} \cdot {\mathbb W}(u^{(k)}) \cdot  (u^{(k)} - V^{(k)})  \dx   \\
  &  +  2 \int_{\Omega_R} {\mathbb D}(\widetilde{[v_*]_n}): {\mathbb D}(u^{(k)}) \dx  - {\mathrm{Re}}  \int_{\Omega_R} {\widetilde{[v_*]_n}} \cdot \nabla u^{(k)} \cdot {\widetilde{[v_*]_n}} \dx  - \alpha \int_{\partial \Omega} [v_*]_{\tau}  \cdot [u^{(k)} - V^{(k)}]_{\tau} \dS  \\
 = & \, 
 \| u^{(k)} \|^2_{{\mathcal V}_R}   +   2  {\mathrm{Re}} \int_{\Omega_R} \widetilde{[v_*]_n} \cdot {\mathbb W}(u^{(k)}) \cdot (u^{(k)} - V^{(k)})  \dx 
   +  2 \int_{\Omega_R} {\mathbb D}(\widetilde{[v_*]_n}): {\mathbb D}(u^{(k)})\dx  \\
   & - {\mathrm{Re}}  \int_{\Omega_R} {\widetilde{[v_*]_n}} \cdot \nabla u^{(k)} \cdot {\widetilde{[v_*]_n}} \dx  - \alpha \int_{\partial \Omega} [v_*]_{\tau}  \cdot [u^{(k)} - V^{(k)}]_{\tau}\dS .
\end{aligned}
\]
Observe that
\begin{multline}\label{est:F1}
 \left| 2 \int_{\Omega_R} {\mathbb D}(\widetilde{[v_*]_n}): {\mathbb D}(u^{(k)})\dx  -  {\mathrm{Re}}  \int_{\Omega_R} {\widetilde{[v_*]_n}} \cdot \nabla u^{(k)} \cdot {\widetilde{[v_*]_n}} \dx  - \alpha \int_{\partial \Omega} [v_*]_{\tau}  \cdot [u^{(k)} - V_k]_{\tau} \dS \right| \\
\leq C(\Omega) \| \widetilde{[v_*]_n} \|_{1,2,\Omega}  \| u^{(k)} \|_{{\mathcal V}_R} + C(\Omega) {\mathrm{Re}} \| \widetilde{[v_*]_n} \|_{1,2,\Omega}^2  \| u^{(k)}  \|_{{\mathcal V}_R} + \alpha \| [v_*]_{\tau} \|_{2,\partial \Omega} \| u^{(k)} \|_{{\mathcal V}_R} .
\end{multline}
In view of Lemma \ref{lemaext} and estimate \eqref{est:extension}, we obtain
\begin{equation}
\label{est:F2}
\begin{aligned}
 & 2  {\mathrm{Re}} \int_{\Omega_R} \widetilde{[v_*]_n} \cdot {\mathbb W}(u^{(k)}) \cdot  (u^{(k)}  - V^{(k)} )  \dx  \\
\leq & \, {\mathrm{Re}}  [C({\mathcal B}_0) \| (v_{*}\cdot n )n - \Phi \sigma|_{\partial\Omega} \|_{1/2,2,\partial\Omega} + C({\mathcal B}_0) | \Phi  |  ]  \| u^{(k)} \|_{{\mathcal V}_R}^2.
\end{aligned}
\end{equation}
As \[
 \| u^{(k)} \|^2_{{\mathcal V}_R} = \sum_{i=1}^k \left({c_{i}^{(k)}}\right)^2 = |c^{(k)}|^2,  
\]
using \eqref{est:F1}--\eqref{est:F2}, we arrive at the following inequality:
\begin{multline*}
F(c^{(k)}) \cdot c^{(k)}
\geq \left( 1- {\mathrm{Re}}  \left[C({\mathcal B}_0) \| (v_{*}\cdot n )n - \Phi \sigma|_{\partial\Omega} \|_{1/2,2,\partial\Omega} + 
C({\mathcal B}_0) | \Phi  |  \right]   \right) |c^{(k)}|^2  \\
 - \left[ C(\Omega) \| \widetilde{[v_*]_n} \|_{1,2,\Omega}  + {\mathrm{Re}} \, C(\Omega) \| \widetilde{[v_*]_n} \|_{1,2,\Omega}^2 ) + \alpha \| [v_*]_{\tau} \|_{2,\partial \Omega} \right] |c^{(k)}|. 
\end{multline*}
Now, if 
\begin{equation}\label{smalldata}
{\mathrm{Re}}  \left[C({\mathcal B}_0) \| (v_{*}\cdot n )n - \Phi \sigma|_{\partial\Omega} \|_{1/2,2,\partial\Omega} + 
C({\mathcal B}_0) | \Phi  |  \right] < 1,
\end{equation}
which holds true for suitable $C_1 >0$ and $C_2 >0$ such that
\begin{equation}\label{smalldata2}
\mathrm{Re}|\Phi|< C_1, \quad \mathrm{Re}\| (v_{*}\cdot n )n  -  \Phi \sigma|_{\partial \Omega}\|_{H^{1/2}(\partial\Omega)} < C_2
\end{equation}
then
\[
F(c^{(k)}) \cdot c^{(k)} > 0  
\]
for all $c^{(k)} \in {\mathbb R}^k$ with
\[
|c^{(k)}| = 2 \frac{C(\Omega) \| \widetilde{[v_*]_n} \|_{1,2,\Omega}  + {\mathrm{Re}} \, C(\Omega) \| \widetilde{[v_*]_n} \|_{1,2,\Omega}^2  + \alpha \| [v_*]_{\tau} \|_{2,\partial \Omega} }{ 1- {\mathrm{Re}}  \left[C({\mathcal B}_0) \| (v_{*}\cdot n )n - \Phi \sigma|_{\partial\Omega} \|_{1/2,2,\partial\Omega} + 
C({\mathcal B}_0) | \Phi  |  \right]  }.
\]
Hence, there exists $\overline c^{(k)} \in {\mathbb R}^k$ such that $F(\overline c^{(k)}) = 0$, that is, system \eqref{sistGR} has a solution. Since $F(\overline c^{(k)} ) \cdot \overline c^{(k)} = 0$, using \eqref{est:F1}--\eqref{est:F2},
 the solution to system \eqref{sistGR} satisfies
\begin{multline*}
 \| u^{(k)} \|^2_{{\mathcal V}_R}
\leq {\mathrm{Re}}  \left[C({\mathcal B}_0) \| (v_{*}\cdot n )n - \Phi \sigma|_{\partial\Omega} \|_{1/2,2,\partial\Omega} + 
C({\mathcal B}_0) | \Phi  |  \right]   \| u^{(k)} \|^2_{{\mathcal V}_R}  \\
+ \left[ C(\Omega) \| \widetilde{[v_*]_n} \|_{1,2,\Omega}  + {\mathrm{Re}} \, C(\Omega) \| \widetilde{[v_*]_n} \|_{1,2,\Omega}^2 ) + \alpha \| [v_*]_{\tau} \|_{2,\partial \Omega} \right]  \| u^{(k)} \|_{{\mathcal V}_R}. 
 \end{multline*}
Under the assumption \eqref{smalldata}, the following uniform estimates hold:
\begin{equation}
 \| u_R^{(k)} \|_{{\mathcal V}_R} \leq \frac{C(\Omega) \| \widetilde{[v_*]_n} \|_{1,2,\Omega}  + {\mathrm{Re}} \, C(\Omega) \| \widetilde{[v_*]_n} \|_{1,2,\Omega}^2 + \alpha \| [v_*]_{\tau} \|_{2,\partial \Omega} }{ 1- {\mathrm{Re}}  \left[C({\mathcal B}_0) \| (v_{*}\cdot n )n - \Phi \sigma|_{\partial\Omega} \|_{1/2,2,\partial\Omega} + 
C({\mathcal B}_0) | \Phi  |  \right]  }, \quad \forall\ k \in {\mathbb N}, \, \forall\ R >  \delta({\mathcal B}_0).  
\label{est:ck}
\end{equation}

As the right-hand side of estimate \eqref{est:ck} is independent of $k$ and $R$, we can pass to the limits $k\rightarrow\infty$ and $R\rightarrow\infty$. {After letting \(k \to \infty\) with \(R\) fixed, we obtain a function
\(u_R \in \mathcal V_R\).
We then extend \(u_R\) by zero outside the domain \(\Omega_R\).
Passing to the limit as \(R \to \infty\) yields a function
\(u \in \mathcal V\), which satisfies an estimate analogous to
\eqref{est:ck}. Consequently, we establish the existence of a weak solution under the
restrictions that \(|\Phi|\) is sufficiently small and that $
\bigl\| (v_{*}\cdot n)n - \Phi \sigma \bigr\|_{1/2,2,\partial\Omega}$
is sufficiently small.
If \eqref{smalldata2} holds, then by \eqref{esttvn} the function $
v = u + \widetilde{[v_*]_n}$
satisfies the estimate stated in the theorem.
}
\end{proof}


\section{Nonzero Propelling Boundary Velocity}\label{sec4}

\subsection{Auxiliary Stokes problems}
 In order to solve the linear problem \eqref{eqn1tan}, we will need a number of auxiliary classical exterior Stokes problems with Navier boundary conditions. We recall the elementary rigid motion as in \eqref{elrigid} and associated  auxiliary Stokes solutions $(H^{(i)},P^{(i)})$ as in \eqref{astok}. For lifting the boundary values $v_*$, we consider
\begin{equation}
\label{liftv*}
\left\{
    \begin{aligned}
 \nabla \cdot  {\mathbb T}(\vartheta,\pi)=0,\quad 
  \nabla \cdot  \vartheta = 0 
\quad \mbox{ in }\Omega,  \\  \vartheta \cdot n
= v_* \cdot n \quad \mbox{ on }\partial\Omega, 
 \\  2 [{\mathbb D}(\vartheta) n] \times n + \alpha \vartheta \times n 
= \alpha v_{*} \times n \quad \mbox{ on }\partial\Omega,  \\ \displaystyle \lim_{\left| x\right| \rightarrow \infty }\vartheta(x)=0.  
\end{aligned}
\right.
\end{equation}
Let us establish the wellposedness results for problems \eqref{liftv*} and \eqref{astok}.

\begin{lemma}\label{lehi}
Let $\partial\Omega $ be of class $C^{2,1}$ and
let $v_{*}\cdot n \in H^{3/2}(\partial\Omega)$, $v_{*}\times n \in H^{1/2}(\partial\Omega)^3$. Then problem \eqref{liftv*}
 admits one and only one solution $(\vartheta,\pi)$ such that  
 $$
\begin{array}{l}
\vartheta \in 
H^{2,1}_{\rho}(\Omega)^3, \quad 
\pi \in H^{1,1}_{\rho} (\Omega),  
\end{array}
$$
and the following estimates hold
\begin{equation}\label{st-theta}
\|(1+|x|^2)^{-\frac{1}{2}}\vartheta\|_{L^{2}(\Omega)}+\| \nabla\vartheta \|_{H^{1}(\Omega)} + \| \pi \|_{H^{1}(\Omega)} \leq C (\|v_{*}\cdot n\|_{H^{3/2}(\partial\Omega)}+ \|v_{*}\times n\|_{H^{1/2}(\partial\Omega)})
\end{equation}
with $C=C({\mathcal{B}}).$ Moreover, for each $i \in \{1,...,6\},$ problem \eqref{astok} has a unique solution $(H^{(i)},P^{(i)})$ such that $H^{(i)},P^{(i)} \in C^{\infty}(\Omega),$ 
\begin{eqnarray*}
H^{(i)}\in H^{2,1}_{\rho}(\Omega)^3, \quad 
P^{(i)} \in H^{1,1}_{\rho} (\Omega),
\end{eqnarray*}
and it obeys the following
estimates 
\begin{equation}\label{est:H}
\|(1+|x|^2)^{-\frac{1}{2}}H^{(i)}\|_{L^{2}(\Omega)}+\| \nabla H^{(i)}\|_{H^{1}(\Omega)} + \| P^{(i)} \|_{H^{1}(\Omega)}\leq C.
\end{equation}
\end{lemma}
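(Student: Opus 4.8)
The plan is to treat problems \eqref{liftv*} and \eqref{astok} as special cases of the \emph{exterior Stokes problem with Navier boundary conditions} and mixed data: find $(w,q)$ with $w \to 0$ at infinity solving $\nabla \cdot \mathbb{T}(w,q) = 0$, $\nabla \cdot w = 0$ in $\Omega$, $w \cdot n = g$ and $2[\mathbb{D}(w)n]\times n + \alpha w\times n = \alpha h$ on $\partial\Omega$. For \eqref{liftv*} we take $g = v_*\cdot n$, $h = v_*\times n$ (reading $\alpha h$ as the prescribed tangential datum); for \eqref{astok} we take $g = \widetilde{\mathsf{e}_i}\cdot n$, $h = \widetilde{\mathsf{e}_i}\times n$, which are smooth since $\partial\Omega$ is $C^{2,1}$. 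First I would invoke the linear theory for the exterior Stokes system in the weighted spaces $H^{m,k}_\rho$: existence, uniqueness and the a priori estimate $\|(1+|x|^2)^{-1/2}w\|_{2,\Omega} + \|\nabla w\|_{1,2,\Omega} + \|q\|_{1,2,\Omega} \le C(\|g\|_{3/2,2,\partial\Omega} + \|h\|_{1/2,2,\partial\Omega})$, which is exactly the statement of \cite{AcAmConGh} combined with the weighted-space framework of Galdi \cite{G} used earlier in the excerpt (this is the same machinery invoked at the end of the proof of the bounded-domain lemma). The key point is that the Navier condition, written as $2[\mathbb{D}(w)n]_\tau = \alpha([h]_\tau - [w]_\tau)$, is a Robin-type tangential condition whose associated bilinear form $2\int_\Omega \mathbb{D}(w):\mathbb{D}(\varphi) + \alpha\int_{\partial\Omega}[w]_\tau\cdot[\varphi]_\tau$ is coercive on the relevant divergence-free subspace of $H^{1,0}_\rho(\Omega)^3$ by the Hardy inequality and a Korn inequality on the exterior domain; this gives the variational solution at the $H^{1,0}_\rho$ level.

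Next I would bootstrap regularity. Having a weak solution with $\nabla w \in L^2(\Omega)$ and $w/\rho \in L^2(\Omega)$, the local interior and boundary elliptic regularity for the Stokes operator with Navier boundary conditions (again \cite[Theorem 4.5]{AcAmConGh}) upgrades $w$ to $H^2_{\mathrm{loc}}$ and $q$ to $H^1_{\mathrm{loc}}$ near $\partial\Omega$; combined with the decay encoded in the weighted estimate at infinity (the second-derivative bound $\rho D^2 w \in L^2(\Omega)$ follows from the weighted Stokes estimates, since the full fundamental-tensor asymptotics give $D^2 w = O(|x|^{-3})$, hence $\rho D^2 w = O(|x|^{-2}) \in L^2$), one concludes $w \in H^{2,1}_\rho(\Omega)^3$ and $q \in H^{1,1}_\rho(\Omega)$, with the stated norm bound. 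For \eqref{astok} the data $\widetilde{\mathsf{e}_i}\cdot n$ and $\widetilde{\mathsf{e}_i}\times n$ are $C^\infty$ on $\partial\Omega$, the domain is $C^{2,1}$ but the equation has no forcing, so by iterating boundary regularity (or by noting the solution is real-analytic in $\Omega$ since it is Stokes-harmonic there) one gets $H^{(i)}, P^{(i)} \in C^\infty(\Omega)$; the weighted bounds \eqref{est:H} follow from the general estimate with the fixed geometric data, the constant $C$ depending only on $\mathcal{B}_0$.

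The main obstacle I anticipate is \emph{uniqueness} in the exterior domain under the Navier condition: one must rule out nontrivial solutions of the homogeneous problem ($g=h=0$) in the weighted class. The energy identity gives $2\|\mathbb{D}(w)\|_{2,\Omega}^2 + \alpha\|[w]_\tau\|_{2,\partial\Omega}^2 = 0$ after integration by parts — but justifying the vanishing of the boundary term at infinity requires the decay $w = O(|x|^{-1})$, $\nabla w = O(|x|^{-2})$, $q = O(|x|^{-2})$ from the weighted framework, and then $\mathbb{D}(w)\equiv 0$ forces $w$ to be a rigid field, which combined with $w\to 0$ at infinity forces $w\equiv 0$ (and then $q$ constant, hence $0$ by the decay). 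A secondary technical point is the correct handling of the tangential Robin term in the integration by parts near $\partial\Omega$ — one uses the decomposition of Remark \ref{BCv} and the identity in Remark \ref{remtau} to rewrite $\int_{\partial\Omega}[\mathbb{D}(w)n]_\tau\cdot[\varphi]_\tau$ cleanly — but this is routine once the framework is fixed. I would therefore organize the proof as: (i) set up the variational formulation and coercivity in $H^{1,0}_\rho$; (ii) existence via Lax–Milgram and recovery of the pressure by de Rham; (iii) uniqueness via the energy identity and weighted decay; (iv) regularity bootstrap to $H^{2,1}_\rho \times H^{1,1}_\rho$ and, for \eqref{astok}, to $C^\infty(\Omega)$; (v) collect the estimates \eqref{st-theta} and \eqref{est:H}.
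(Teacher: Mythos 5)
The paper's proof of this lemma is a short appeal to the literature: it cites \cite[Theorem 3.6 and Proposition 3.5]{DhMeRa} (Dhifaoui--Meslameni--Razafison, \emph{Weighted Hilbert spaces for the stationary exterior Stokes problem with Navier slip boundary conditions}), which treats precisely the exterior Stokes problem with Navier slip conditions in the weighted spaces $H^{m,k}_\rho$, and immediately yields existence, uniqueness (via the kernel characterization) and the estimate \eqref{st-theta}, then applies the same result with data $\widetilde{\textsf{e}_i}\cdot n$, $\widetilde{\textsf{e}_i}\times n$ to get $(H^{(i)},P^{(i)})$.

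Your proposal is a genuinely different route: you outline a self-contained proof (variational formulation in $H^{1,0}_\rho$, coercivity via a weighted Korn inequality with the boundary dissipation term, Lax--Milgram and de Rham, uniqueness via the energy identity, and a regularity bootstrap to $H^{2,1}_\rho\times H^{1,1}_\rho$). The outline is sound in principle and shows you understand why the result should hold. Two remarks. First, you attribute the ``linear theory'' to \cite{AcAmConGh} and \cite{G}, but neither of those covers your case: \cite{AcAmConGh} is bounded domains, and Galdi's exterior theory in \cite{G} is Dirichlet; the correct reference, and the one the paper actually uses, is \cite{DhMeRa}. Second, the most delicate steps in your sketch are precisely what \cite{DhMeRa} establishes and you treat as essentially routine: the weighted Korn-type coercivity on the exterior domain with the Navier-slip boundary term (not a trivial consequence of the bounded-domain statements in this paper, since those are for $u=0$ on $\partial B_R$ or full Dirichlet rigid traces), and the weighted regularity bootstrap giving $\rho D^2 w\in L^2$, where your decay heuristic ``$D^2 w=O(|x|^{-3})$'' is only plausible for smooth localized data and would need to be proven from the fundamental-solution representation. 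So both approaches reach the same conclusion; the paper buys brevity by quoting \cite{DhMeRa}, while your approach, if fleshed out, would essentially re-derive that reference's main theorem. If you were to write this up, the honest thing to do is either cite \cite{DhMeRa} directly (as the paper does) or acknowledge that the two ``routine'' steps above are the real content.

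A small additional point: you correctly observe that $H^{(i)},P^{(i)}\in C^\infty(\Omega)$ follows from interior hypoellipticity of the homogeneous Stokes system; the paper omits this remark entirely, so your proposal is slightly more complete on that point.
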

\begin{proof} We know from the existence and regularity result \cite[Theorem 3.6]{DhMeRa} that if we consider $v_{*}\cdot n \in H^{3/2}(\partial\Omega)$, $v_{*}\times n \in H^{1/2}(\partial\Omega)^3$, then the solution of the problem \eqref{liftv*} satisfies $(\vartheta,\pi) \in 
H^{2,1}_{\rho}(\Omega)^3, \times H^{1,1}_{\rho} (\Omega)$. The uniqueness of the solution follows from the characterization of the kernel described in \cite[Proposition 3.5]{DhMeRa} and by observing that the kernel is trivial, we obtain the desired estimate \eqref{st-theta} from \cite[Theorem 3.6]{DhMeRa}. Similarly, we can prove the existence of a unique solution $(H^{(i)},P^{(i)})$ to problem \eqref{astok} along with the estimate \eqref{est:H}.
\end{proof}

\subsection{Auxiliary fields related to rigid body}\label{Aux:rigid}

We will need the following lemma (see \cite{GaRev}).
\begin{lemma}\label{legraab}
Let $\Omega$ be locally Lipschitz and $ u \in H^{1,0}_\rho(\Omega)^3$ such that $\nabla \cdot u = 0$ in $\Omega$ and $u(x) = a_u  + b_u \times x $ on $\partial\Omega$. Then there exists $C_i=C_i(\Omega)$, $i=1,2$, such that
\begin{equation*}
\|\nabla u \|_{2,\Omega} \leq C_1 \|{\mathbb D}(u)\|_{2,\Omega}, \qquad 
|a_u| + |b_u| \leq C_2 \|{\mathbb D}(u)\|_{2,\Omega}.
\end{equation*}
\end{lemma}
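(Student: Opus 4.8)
\emph{Proof plan.} The plan is to reduce everything to the classical identity $\|\nabla w\|_{2,\mathbb{R}^3}^2 = 2\,\|\mathbb{D}(w)\|_{2,\mathbb{R}^3}^2$, valid for solenoidal $w$ in the homogeneous space $\mathcal{D}^{1,2}(\mathbb{R}^3) = \{w\in L^6(\mathbb{R}^3)\mid \nabla w\in L^2(\mathbb{R}^3)\}$, by extending $u$ across $\partial\Omega$ by its boundary rigid motion. First I would set
\begin{equation*}
\widetilde u(x) = \begin{cases} u(x), & x\in\Omega,\\[2pt] a_u + b_u\times x, & x\in\mc{B}_0.\end{cases}
\end{equation*}
Since $u = a_u + b_u\times x$ on $\partial\Omega$ \emph{in the full trace sense} (not only the normal component, in contrast with Lemma \ref{ledesfnl}), the jump of $\widetilde u$ across $\partial\Omega$ vanishes, so $\widetilde u\in H^1_{\mathrm{loc}}(\mathbb{R}^3)^3$ with $\nabla\widetilde u = \chi_\Omega\nabla u + \chi_{\mc{B}_0}\nabla(a_u+b_u\times x)\in L^2(\mathbb{R}^3)^{3\times 3}$ and $\nabla\cdot\widetilde u = 0$ in $\mathbb{R}^3$, using $\nabla\cdot(b_u\times x)=0$. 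As $u\in H^{1,0}_\rho(\Omega)\hookrightarrow L^6(\Omega)$ (cf.\ \cite[Theorem II.6.1]{G}) and the rigid field lies in $H^1(\mc{B}_0)^3\cap L^6(\mc{B}_0)^3$, we get $\widetilde u\in \mathcal{D}^{1,2}(\mathbb{R}^3)^3$, which is the closure of $C_0^\infty(\mathbb{R}^3)^3$ in the gradient norm.

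Next, for $w\in C_0^\infty(\mathbb{R}^3)^3$ two integrations by parts give $\int_{\mathbb{R}^3}\partial_i w_j\,\partial_j w_i\dx = \int_{\mathbb{R}^3}(\nabla\cdot w)^2\dx$, with no boundary terms; approximating $\widetilde u$ by such $w$ and using $\nabla\cdot\widetilde u=0$ shows that the cross term $\int_{\mathbb{R}^3}\partial_i\widetilde u_j\,\partial_j\widetilde u_i\dx$ vanishes. Together with the pointwise identity $2|\mathbb{D}(\widetilde u)|^2 = |\nabla\widetilde u|^2 + \partial_i\widetilde u_j\,\partial_j\widetilde u_i$ this yields $\|\nabla\widetilde u\|_{2,\mathbb{R}^3}^2 = 2\,\|\mathbb{D}(\widetilde u)\|_{2,\mathbb{R}^3}^2$. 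Since $\mathbb{D}(a_u+b_u\times x)\equiv 0$, we have $\|\mathbb{D}(\widetilde u)\|_{2,\mathbb{R}^3} = \|\mathbb{D}(u)\|_{2,\Omega}$, hence $\|\nabla u\|_{2,\Omega}\le\|\nabla\widetilde u\|_{2,\mathbb{R}^3} = \sqrt2\,\|\mathbb{D}(u)\|_{2,\Omega}$, which is the first estimate (in fact with $C_1=\sqrt2$).

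For the bound on $|a_u|+|b_u|$ I would localize. Fix $R_0 > \delta(\mc{B}_0)$; by the trace theorem on the bounded Lipschitz domain $\Omega_{R_0}$, the Hardy inequality on $H^{1,0}_\rho(\Omega)$, and the first estimate,
\begin{equation*}
\|a_u + b_u\times x\|_{2,\partial\Omega}\le\|u\|_{1/2,2,\partial\Omega}\le C(R_0)\bigl(\|u/\rho\|_{2,\Omega} + \|\nabla u\|_{2,\Omega}\bigr)\le C(R_0,\Omega)\,\|\mathbb{D}(u)\|_{2,\Omega}.
\end{equation*}
It then suffices to know that $(a,b)\mapsto\|a + b\times x\|_{2,\partial\Omega}$ is a norm on $\mathbb{R}^3\times\mathbb{R}^3$: by equivalence of norms in finite dimension this gives $|a_u| + |b_u|\le C\|a_u + b_u\times x\|_{2,\partial\Omega}\le C_2(\Omega)\,\|\mathbb{D}(u)\|_{2,\Omega}$. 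Positive definiteness is checked as in the closing argument of the proof of Lemma \ref{ledesfnl}: if $a + b\times x = 0$ on $\partial\Omega$ with $b\ne 0$, then $b\times(x - x')=0$ for all $x,x'\in\partial\Omega$, forcing $\partial\Omega$ into a line, which is impossible for the boundary of a body with nonempty interior; hence $b=0$ and then $a=0$.

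The step that demands the most care is verifying $\widetilde u\in\mathcal{D}^{1,2}(\mathbb{R}^3)^3$, i.e.\ that $C_0^\infty(\mathbb{R}^3)^3$ is dense around $\widetilde u$ in the gradient norm, which is precisely what makes the boundary terms at infinity disappear. This is where the decay encoded in $H^{1,0}_\rho(\Omega)$, together with the matching of the \emph{full} boundary trace (so that the distributional gradient $\nabla\widetilde u$ carries no surface measure on $\partial\Omega$), enters in an essential way.
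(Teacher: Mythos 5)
Your proof is correct. Note that the paper itself gives no argument for this lemma — it is simply quoted from \cite{GaRev} — and what you wrote is essentially the classical Galdi-type proof behind that citation: extend $u$ into $\mathcal{B}_0$ by the rigid field (legitimate precisely because the \emph{full} trace is rigid, so the distributional gradient of $\widetilde u$ carries no surface term), observe $\widetilde u$ is solenoidal with $\nabla\widetilde u\in L^2(\mathbb{R}^3)$ and $\widetilde u\in L^6(\mathbb{R}^3)$ (Hardy/Sobolev properties of $H^{1,0}_\rho$), and use the whole-space identity $\|\nabla w\|_{2}^2=2\|\mathbb{D}(w)\|_{2}^2$ for divergence-free fields in $\mathcal{D}^{1,2}(\mathbb{R}^3)$; your justification of the cross-term cancellation by approximation is fine since both quadratic forms are $L^2$-continuous in $\nabla w$. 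The only place you depart from the usual route is the bound on $|a_u|+|b_u|$: the standard argument gets $|b_u|$ from $2|b_u|^2|\mathcal{B}_0|=\|\nabla\widetilde u\|_{2,\mathcal{B}_0}^2\le 2\|\mathbb{D}(u)\|_{2,\Omega}^2$ and $|a_u|$ from the Sobolev inequality $\|\widetilde u\|_{6,\mathbb{R}^3}\le C\|\nabla\widetilde u\|_{2,\mathbb{R}^3}$ restricted to $\mathcal{B}_0$, whereas you go through the trace on $\partial\Omega$, the Hardy inequality, and equivalence of norms on $\mathbb{R}^6$ for $(a,b)\mapsto\|a+b\times x\|_{2,\partial\Omega}$, with a correct positive-definiteness check (a rigid field vanishing on a two-dimensional boundary forces $a=b=0$, in the same spirit as the closing step of Lemma \ref{ledesfnl}). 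Both versions are valid and of comparable effort; yours has the small aesthetic advantage of reusing the trace/Hardy machinery already present in the paper, while the classical one avoids trace theorems altogether.
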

In what follows, we will also use a  cut-off function defined in the following way. Let $\psi \in C_0^\infty({\mathbb R})$ be such that $0 \leq  \psi (t) \leq 1$, for all $t \in {\mathbb R}$, and 
\begin{equation*}
\psi (t) = \begin{cases}  1, \quad |t| \leq 1 \\  0, \quad |t| \geq 2 . \end{cases}
\end{equation*}
 For $R > \delta({\mathcal B}_0)$, let
\begin{equation}
\psi_R(x):=\psi(|x|/R).
\label{coff}
\end{equation} 
Then we have
\begin{equation}
\|\nabla\psi_R\|_{q,\mathbb R^3} \leq CR^{-1+3/q} \qquad (3\leq q\leq\infty).
\label{cut-est}
\end{equation}

Recall $M \in {\mathbb R}^{6\times 6}$ from \eqref{mat}:
\begin{equation*}
M_{ij}= \int_{\partial\Omega} \tilde{e}_i \cdot  {\mathbb T}(H^{(j)},P^{(j)})  n \dS, \quad i,j=1,...,6.
\end{equation*}
Also recall $K,R,S \in {\mathbb R}^{3\times 3}$ from \eqref{def:KRS} and we can write $M$ in the following form
$$
M = \bigg[ \begin{array}{cc}K & S^\top \\ S & R  \end{array}\bigg].
$$
Let us define
$$
g^{(i)}:={\mathbb T}(H^{(i)},P^{(i)})  n|_{\partial\Omega }, \quad i=1,\cdots,6.
$$
\begin{lemma}\label{M:invertible}
The matrices $M,K$ and $R$ are symmetric and positive definite. Moreover, 
$$
 M^{-1}=\bigg[ \begin{array}{cc}A&-A S^{\top} R^{-1}\\ -B SK^{-1}& B  \end{array}\bigg]
$$
where
$A:=(K-S^{\top}R^{-1}S)^{-1}$ and $B:=(R-SK^{-1}S^{\top})^{-1}.$
\end{lemma}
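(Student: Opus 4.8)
\textbf{Proof plan for Lemma \ref{M:invertible}.}

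The plan is to first establish that $M$ is symmetric and positive definite by a direct energy computation involving the auxiliary Stokes fields $H^{(i)}$, and then to deduce the corresponding properties of the diagonal blocks $K$ and $R$, together with the block-inverse formula, by elementary linear algebra (Schur complements). First I would show symmetry: for any $i,j$, integrating by parts in $\Omega$ (using that $H^{(i)},P^{(i)}$ solve the homogeneous Stokes system \eqref{astok} and the decay at infinity from Lemma \ref{lehi}, which guarantees the boundary terms at infinity vanish), one obtains
\begin{equation*}
M_{ij}=\int_{\partial\Omega}\widetilde{\textsf{e}_i}\cdot{\mathbb T}(H^{(j)},P^{(j)})n\dS
= 2\int_{\Omega}{\mathbb D}(H^{(i)}):{\mathbb D}(H^{(j)})\dx+\alpha\int_{\partial\Omega}[H^{(i)}-\widetilde{\textsf{e}_i}]_\tau\cdot[H^{(j)}-\widetilde{\textsf{e}_j}]_\tau\dS,
\end{equation*}
where one uses the boundary conditions \eqref{astok}$_2$, \eqref{astok}$_3$ and the tangential identities in Remark \ref{remtau} to convert the surface stress term into the symmetric friction pairing, exactly as in the computations leading to \eqref{eqvh} in the proof that builds the basis. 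The right-hand side is manifestly symmetric in $i,j$, hence $M=M^\top$, and in particular $K=K^\top$, $R=R^\top$.

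Next I would prove positive definiteness. Take $c=(c_1,\dots,c_6)\in{\mathbb R}^6\setminus\{0\}$ and set $H:=\sum_{i=1}^6 c_i H^{(i)}$, which solves the same homogeneous Stokes problem with boundary rigid motion $\widetilde{\textsf e}:=\sum_i c_i\widetilde{\textsf{e}_i}$. By the bilinear identity above,
\begin{equation*}
c^\top M c = 2\|{\mathbb D}(H)\|_{2,\Omega}^2+\alpha\|[H-\widetilde{\textsf e}]_\tau\|_{2,\partial\Omega}^2\ \ge\ 0,
\end{equation*}
and equality forces ${\mathbb D}(H)=0$ in $\Omega$ together with $[H-\widetilde{\textsf e}]_\tau=0$ on $\partial\Omega$. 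Since $H\to0$ at infinity and ${\mathbb D}(H)=0$ on the connected exterior domain $\Omega$ means $H$ is a rigid field, the decay forces $H\equiv0$; then $H\cdot n=\widetilde{\textsf e}\cdot n$ and $[H-\widetilde{\textsf e}]_\tau=0$ on $\partial\Omega$ give $\widetilde{\textsf e}=0$ on $\partial\Omega$, whence $c=0$ because $\{\widetilde{\textsf{e}_i}\}_{i=1}^6$ are linearly independent as rigid fields (a nontrivial rigid field cannot vanish on the smooth closed surface $\partial\Omega$). This contradicts $c\ne0$, so $c^\top M c>0$; restricting $c$ to have support in indices $\{1,2,3\}$ (resp.\ $\{4,5,6\}$) gives positive definiteness of $K$ (resp.\ $R$). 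In particular $K,R$ are invertible.

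Finally, the block-inverse formula is pure linear algebra. Writing $M=\left[\begin{array}{cc}K&S^\top\\ S&R\end{array}\right]$ with $K,R$ invertible, the Schur complements $K-S^\top R^{-1}S$ and $R-SK^{-1}S^\top$ are symmetric; they are invertible because $M$ is positive definite (a standard fact: if $M>0$ then every Schur complement is $>0$). Setting $A=(K-S^\top R^{-1}S)^{-1}$, $B=(R-SK^{-1}S^\top)^{-1}$, one verifies by direct block multiplication that
\begin{equation*}
\bigg[\begin{array}{cc}A&-AS^\top R^{-1}\\ -BSK^{-1}&B\end{array}\bigg]
\bigg[\begin{array}{cc}K&S^\top\\ S&R\end{array}\bigg]=\bigg[\begin{array}{cc}\mathbb I&0\\ 0&\mathbb I\end{array}\bigg],
\end{equation*}
using the identities $A(K-S^\top R^{-1}S)=\mathbb I$ and $B(R-SK^{-1}S^\top)=\mathbb I$ and the symmetry of all blocks; this identifies $M^{-1}$ as claimed. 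The only genuinely delicate point is the positivity argument, and within it the step that ${\mathbb D}(H)=0$ plus decay at infinity forces $H\equiv0$ on the exterior domain — i.e.\ that the kernel of the auxiliary Stokes operator with Navier conditions is trivial, which is exactly the content invoked from \cite[Proposition 3.5]{DhMeRa} in Lemma \ref{lehi}; everything else is routine integration by parts and matrix manipulation.
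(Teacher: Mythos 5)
Your proposal is correct and follows essentially the same route as the paper: the key identity $M_{ij}=(H^{(i)},H^{(j)})_{\mathcal V}$ obtained by integration by parts (the paper does this rigorously with the cut-off $\psi_R$ and the summability from Lemma \ref{lehi}), symmetry and positive semi-definiteness from the Gram-matrix structure, triviality of the kernel, and the standard Schur-complement block inverse, which the paper simply states without verification. The only cosmetic difference is the equality case: you argue that ${\mathbb D}(H)=0$ plus decay forces $H\equiv 0$ and that a nontrivial rigid field cannot vanish on $\partial\Omega$, whereas the paper deduces $w=W$ on $\partial\Omega$ and invokes the Korn-type Lemma \ref{legraab}; both arguments are valid.
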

\begin{proof} We use the boundary conditions \eqref{astok}$_{3-4}$ to obtain the following expression of $H^{(i)}$ on the boundary $\partial\Omega$:
\begin{equation}\label{Hibdry}
\begin{aligned}
H^{(i)}  =  (\widetilde{\textsf{e}_i}  \cdot n) n + n \times (\widetilde{\textsf{e}_i}  \times n) - \frac{2}{\alpha} [n \times ([{\mathbb D}(H^{(i)}) n] \times n) ] 
 =  \widetilde{\textsf{e}_i}  - \frac{2}{\alpha} [{\mathbb D}(H^{(i)}) n]_{\tau}.
\end{aligned}
\end{equation}

Using the cut-off function \eqref{coff} to perform integration by parts over $\Omega$, we obtain
\begin{multline*}
0  =   
\displaystyle \int_{\Omega} \psi_R [ \nabla \cdot  {\mathbb T}(H^{(j)},P^{(j)}) ] \cdot  H^{(i)} \dx  =  
\displaystyle \int_{\partial \Omega} \psi_R [ {\mathbb T}(H^{(j)},P^{(j)}) n ] \cdot H^{(i)} \dS \\
- 2  \int_{\Omega}  \psi_R  {\mathbb D}(H^{(j)}) : {\mathbb D}(H^{(i)}) \dx  \displaystyle - \int_{\Omega}   [ 2  {\mathbb D}(H^{(j)}) - P^{(j)} \mathbb{I} ] : [H^{(i)} \otimes \nabla \psi_R] \dx,
\end{multline*}
and as $\psi_R = 1$ in a neighborhood of $\partial \Omega$, we get
\begin{equation}
\label{giR}
\int_{\partial \Omega} g^{(j)} \cdot H^{(i)} \dS
 = \, 2  \int_{\Omega}  \psi_R  {\mathbb D}(H^{(j)}) : {\mathbb D}(H^{(i)}) \dx  + \int_{\Omega}   (2 {\mathbb D}(H^{(j)}) - P^{(j)} \mathbb{I}): ( H^{(i)} \otimes \nabla \psi_R) \dx .
\end{equation}
Since $H^{(i)}\in H^{2,1}_{\rho}(\Omega)^3$, in particular $H^{(i)} \in L^6(\Omega)$, and $P^{(i)} \in H^{1,1}_{\rho} (\Omega)$, the last integral satisfies
\[
\begin{aligned}
& \left| \int_{\Omega}   (2  {\mathbb D}(H^{(j)}) - P^{(j)} \mathbb{I}): (H^{(i)} \otimes \nabla \psi_R) \dx \right| \\
\leq & \,  \| \nabla \psi_R \|_{3,B_{R,2R}}\left(2 \| {\mathbb D}(H^{(j)})\|_{2,\Omega}+\| P^{(j)} \|_{2,\Omega}\right) \| H^{(i)} \|_{6,\Omega} \to 0, \quad \text{as} \ R\to \infty,
\end{aligned}
\]
where $B_{R,2R}:= \{x \in {\mathbb R}^3 \mid R < |x| < 2R \}$. Letting $R\to \infty$ in \eqref{giR} yields 
\begin{equation}
\int_{\partial \Omega} g^{(j)} \cdot H^{(i)} \dS
= 2 \int_{\Omega}  {\mathbb D}(H^{(j)}) :  {\mathbb D}(H^{(i)}) \dx.
\label{rgdhi}
\end{equation}
From \eqref{Hibdry} and \eqref{rgdhi}, we get 
$$
\begin{array}{rcl}
M_{ij} & = & \displaystyle \int_{\partial\Omega} H^{(i)} \cdot  {\mathbb T}(H^{(j)},P^{(j)}) n \dS + \frac{2}{\alpha}  \int_{\partial\Omega}  [{\mathbb D}(H^{(i)}) n]_{\tau} \cdot  {\mathbb T}(H^{(j)},P^{(j)}) n \dS \medskip \\ & = & \displaystyle \int_{\partial \Omega} g^{(j)} \cdot H^{(i)} \dS + \frac{4 }{\alpha}  \int_{\partial\Omega} [{\mathbb D}(H^{(i)}) n]_\tau \cdot  [{\mathbb D}(H^{(j)}) n]_\tau \dS \medskip \\
& = & \displaystyle 2 \int_{\Omega}  {\mathbb D}(H^{(j)}) :  {\mathbb D}(H^{(i)}) \dx + \frac{4 }{\alpha}  \int_{\partial\Omega} [{\mathbb D}(H^{(i)}) n]_\tau \cdot  [{\mathbb D}(H^{(j)}) n]_\tau \dS.
\end{array}
$$
Recalling \eqref{Hibdry} and the inner product defined in the space $\mathcal{V}$, we can write
\begin{equation}
\label{mijs}
M_{ij} = 2 \int_{\Omega} {\mathbb D}(H^{(i)}):{\mathbb D}(H^{(j)}) \dx + \alpha  \int_{\partial\Omega} [ H^{(i)} - \widetilde{\textsf{e}_i} ]_\tau \cdot 
[ H^{(j)} - \widetilde{\textsf{e}_j} ]_\tau  \dS = (H^{(i)},H^{(j)})_{\mathcal{V}}.
\end{equation}

It is clear that the matrices $M$, $K$ and $R$ are symmetric. In order to  prove that $M$ is positive definite, we first notice that
\begin{eqnarray*}
 z_i M_{ij} z_j & = &  (z_i H^{(i)}, z_j H^{(j)})_{\mathcal V} \\
& = & 2 \| D(z_i H^{(i)}) \|_{2,\Omega}^2 + \alpha  \| [z_i H^{(i)} - z_i \widetilde{\textsf{e}_i}]_\tau \|_{2,\partial\Omega}^2 \geq 0
\end{eqnarray*}
for all $z=(z_1,z_2,...,z_6) \in {\mathbb R}^6.$ Suppose that 
\begin{equation}\label{Dw0}
z_i M_{ij}z_j = 0 \iff  \| {\mathbb D}(z_i H^{(i)}) \|_{2,\Omega} = \| [z_i H^{(i)} - z_i \widetilde{\textsf{e}_i}]_\tau \|_{2,\partial\Omega} = 0.
\end{equation}
Let us set $w:=z_iH^{(i)}$ in $\Omega$ and $W:=z_i\widetilde{\textsf{e}_i}$. As $ [z_i H^{(i)} - z_i \widetilde{\textsf{e}_i}]_\tau  = [ W ]_\tau  = 0$ on $\partial\Omega$, we use the identity \eqref{Hibdry} to conclude that $w = W$ on $\partial\Omega$. Also $w\in H^{1,0}_{\rho}$, so we can apply Lemma \ref{legraab} to obtain
$$
| z_i | \leq C \| {\mathbb D}(w)\|_{2,\Omega} = 0, \quad i=1,...,6,
$$
Consequently, $M$ is positive definite. Thus, $K$ and $R$ are positive definite. 
\end{proof}

Exploiting the regularity properties of Lemma \ref{lehi} and the first self-propelling condition, we prove the existence result for the problem \eqref{eqn1tan}.
 \begin{proof}[Proof of Theorem \ref{Mainresult1}]
The solution of problem \eqref{eqn1tan} is obtained by solving a linear system. Set $(v,p)=(c_i H^{(i)} + \vartheta, c_i P_i + \pi ),$ with $c =(c_1,c_2,...,c_6) \in {\mathbb R}^6$, $(H^{(i)},P^{(i)})$ and  $(\vartheta,\pi)$ defined in \eqref{astok} and \eqref{liftv*}, respectively.  Then $(v,p)$ satisfy $(\ref{eqn1tan})_{1-4},$ for any $c \in {\mathbb R}^6.$ In order to satisfy the self-propelling conditions $(\ref{eqn1tan})_{5},$ we impose that $c$ solves the following system
\begin{equation}
M c = \beta
\label{ls}
\end{equation}
where 
\begin{equation*}
\beta_i=-\int_{\partial\Omega} \widetilde{\textsf{e}_i}  \cdot  {\mathbb T}(\vartheta,\pi)  n \dS,\quad i=1,...,6. 
\label{betai}
\end{equation*}
{We can solve $c \in\mathbb{R}^6$ uniquely in the equation \eqref{ls} due to the invertibility of the matrix $M$ as shown in Lemma \ref{M:invertible}. 
Consequently, $(v,p)=(c_i H^{(i)} + \vartheta,c_i P_i + \pi )$ with $c$ given by $c=M^{-1}\beta$ solves \eqref{eqn1tan}.} Using (\ref{ls}), we have
{$$
\|M\|^{-1} |\beta| \leq | c | \leq \|M^{-1}\| |\beta|
$$}
and since $\beta = {\mathbb P} (v_{*}),$ it holds
\begin{equation*}
C_1 \| {\mathbb P} (v_{*}) \| \leq |\xi  | + |\omega | \leq C_2 \|{\mathbb P} (v_{*})\|
\label{eqco} 
\end{equation*}
with $C_i=C_i(\mathcal{B}_0), i=1,2.$ Concerning uniqueness, suppose that $(v^{(i)},V^{(i)},p^{(i)}) \in H^{2,1}_{\rho}(\Omega)^3 \times H^{1,1}_{\rho} (\Omega), i=1,2,$ are two solutions. Then $(v,V,p)=(v^{(1)} - v^{(2)},V^{(1)}-V^{(2)},p^{(1)}-p^{(2)})$ solves 
\begin{equation}
\label{eqnuni}\left\{ 
\begin{aligned}
 \nabla \cdot  {\mathbb T}(v,p)  =0, \quad  
 \nabla \cdot  v  = 0  \quad \text{ in }\Omega, \\
 \displaystyle v \cdot n
 = V \cdot n \quad \text{ on }\partial\Omega, 
 \\  2 [{\mathbb D}(v) n] \times n + \alpha v \times n 
 = \alpha V \times n \quad \text{ on }\partial\Omega,  \\ 
 \lim_{\left| x\right| \rightarrow
\infty }v(x)  = 0, \\  \int_{\partial\Omega} {\mathbb T}(v
,p) n dS  =  \int_{\partial\Omega} x \times {\mathbb T}(v,p)  n dS =0 .
\end{aligned}
\right. 
\end{equation}

Recall the cut-off function \eqref{coff}. Dot-multiplying $\eqref{eqnuni}_1$ by $\psi_R v$ and integrating by parts over $\Omega,$ we get
\begin{equation*}
 \int_{\partial \Omega} [ {\mathbb T}(v,p) n ] \cdot v  \dS
- 2  \int_{\Omega}  \psi_R | {\mathbb D}(v) |^2 \dx
 - \int_{\Omega}   [ 2  {\mathbb D}(v) - p \mathbb{I} ] : [ v \otimes \nabla \psi_R] \dx=0 .
\end{equation*}
Letting $R\to \infty$ in the above relation yields   
\begin{equation}
  \int_{\partial \Omega} [ {\mathbb T}(v,p) n ] \cdot v  \dS = 2 \|{\mathbb D}(v)\|_{2,\Omega}^2.
  \label{uniqst}
\end{equation}
From the boundary conditions, it follows
\begin{equation}
v =  V - \frac{2}{\alpha} [{\mathbb D}(v) n]_\tau \mbox{ on }\partial\Omega. 
\label{vVD}
\end{equation}
Using \eqref{vVD} in \eqref{uniqst}, we get
\[
\int_{\partial\Omega} V \cdot {\mathbb T}(v,p)  n \dS - \frac{2}{\alpha} \int_{\partial\Omega} [{\mathbb D}(v) n]_\tau \cdot {\mathbb T}(v,p) n \dS = 2 \|{\mathbb D}(v)\|_{2,\Omega}^2.
\] 
Since 
\[
[{\mathbb D}(v) n]_\tau \cdot {\mathbb T}(v,p) n  = | [{\mathbb D}(v) n]_\tau |^2 = \frac{\alpha^2}{4} | (v -  V)|_{\partial \Omega} |^2
\]
we obtain (recall that  $V(x)=\xi + \omega \times x$)
$$
\xi \cdot \int_{\partial\Omega} {\mathbb T}(v,p)  n \dS + \omega \cdot \int_{\partial\Omega}  x \times {\mathbb T}(v,p)  n \dS = \frac{\alpha}{2} | (v -  V)|_{\partial \Omega} |^2 +  2 \|{\mathbb D}(v)\|_{2,\Omega}^2.
$$
Due to the self-propelling conditions $\eqref{eqnuni}_{5}$,  we get
$$
v  =  V \text{ on } \partial \Omega , \quad  {\mathbb D}(v) = 0 \text{ in } \Omega.
$$
From Lemma \ref{legraab}, we deduce
\[
|\xi| + |\omega| \leq C \|{\mathbb D}(v)\|_{2,\Omega} = 0,
\]
so that $V=0$ and $(v,p)$ satisfies
\[\left\{
    \begin{aligned}
\displaystyle 
\displaystyle \nabla \cdot  {\mathbb T}(v,p) = 0, \quad 
\nabla \cdot  v  = 0  \mbox{ in } \Omega,  \\
\displaystyle v = 0   \mbox{ on } \partial\Omega,  \\ 
\displaystyle \lim_{\left| x\right| \rightarrow \infty }v(x)= 0.
 \end{aligned}
\right.
\]
From the summability properties of $v$ and $p$ and classical results for the exterior Stokes problem with Dirichlet boundary conditions, it follows that $v=0$ and $p=0.$
\end{proof}

\subsection{The relation between the propelling velocity and the velocity of rigid body}

Based on the results of \cite{Galdi1999} and the regularity results from the Lemma \ref{lehi}, one may expect that when $\partial\Omega$ is of class $C^{2,1}$, the set $\{g^{(1)},...,g^{(6)}\} \subset L^2(\partial\Omega)$, with $g^{(i)}:={\mathbb T}(H^{(i)},P^{(i)})  n |_{\partial\Omega }$, is linearly independent. We introduce the \emph{thrust space} 
\begin{equation}\label{thrust}
{\mathcal{T}}({\mathcal{B}_0}):=\mbox{span}\{g^{(1)},...,g^{(6)}\}
\end{equation}
and the projection operator ${\mathbb P}$ in ${\mathcal{T}}({\mathcal{B}_0}
).$ We have the following result
\begin{lemma}
Let $\partial\Omega$ be of class $C^{2,1}$. Then, the system of vector functions 
$\left\{ g^{(1)},... ,g^{(6)} \right\} \subset L^2(\partial \Omega)$
is linearly independent.
\label{lemali}
\end{lemma}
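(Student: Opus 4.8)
The plan is to reduce the linear independence of $\{g^{(1)},\dots,g^{(6)}\}$ in $L^2(\partial\Omega)$ to the invertibility of the matrix $M$, which has already been established in Lemma~\ref{M:invertible}. The key observation is that, by the very definition \eqref{mat} of $M$,
\[
M_{ij}=\int_{\partial\Omega} \widetilde{\textsf{e}_i}\cdot {\mathbb T}(H^{(j)},P^{(j)})n\dS=\int_{\partial\Omega} \widetilde{\textsf{e}_i}\cdot g^{(j)}\dS,
\]
so $M_{ij}$ is precisely the $L^2(\partial\Omega)$ pairing of the elementary rigid field $\widetilde{\textsf{e}_i}$ with $g^{(j)}$. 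This pairing is well defined, since $\partial\Omega$ is compact and $\widetilde{\textsf{e}_i}\in C^\infty(\overline{\Omega})$, hence $\widetilde{\textsf{e}_i}|_{\partial\Omega}\in L^2(\partial\Omega)^3$.

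I would then argue by contradiction in the usual way: suppose $\sum_{j=1}^6 \lambda_j g^{(j)}=0$ in $L^2(\partial\Omega)$ for some $\lambda=(\lambda_1,\dots,\lambda_6)\in{\mathbb R}^6$. Testing this identity against $\widetilde{\textsf{e}_i}$ for each $i\in\{1,\dots,6\}$ and using the displayed formula yields
\[
0=\int_{\partial\Omega}\widetilde{\textsf{e}_i}\cdot\Big(\sum_{j=1}^6 \lambda_j g^{(j)}\Big)\dS=\sum_{j=1}^6 M_{ij}\lambda_j=(M\lambda)_i,\qquad i=1,\dots,6,
\]
that is, $M\lambda=0$. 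Since $M$ is symmetric and positive definite by Lemma~\ref{M:invertible}, it is invertible, and therefore $\lambda=0$. This proves that $\{g^{(1)},\dots,g^{(6)}\}$ is linearly independent in $L^2(\partial\Omega)$.

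I do not expect a genuine obstacle here: the substantive work has already been done, namely the identity $M_{ij}=(H^{(i)},H^{(j)})_{\mathcal V}$ of \eqref{mijs} combined with the Korn-type estimate of Lemma~\ref{legraab}, which together force $M$ to be positive definite. The only points deserving a word of care are the legitimacy of the boundary pairing used above — immediate from the compactness of $\partial\Omega$ and the smoothness of the elementary rigid motions — and the bookkeeping of symmetry, so that testing against $\widetilde{\textsf{e}_i}$ really produces the rows of $M$ rather than its transpose (which is harmless here since $M=M^\top$).
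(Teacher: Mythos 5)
Your argument is correct, but it takes a genuinely different (and shorter) route than the paper's. You reduce linear independence of $\{g^{(1)},\dots,g^{(6)}\}$ to the invertibility of the Gram-type matrix $M$, which Lemma~\ref{M:invertible} has already established by showing $M_{ij}=(H^{(i)},H^{(j)})_{\mathcal V}$ and then applying Lemma~\ref{legraab}; the computation $\sum_j M_{ij}\lambda_j=\int_{\partial\Omega}\widetilde{\textsf{e}_i}\cdot\sum_j\lambda_j g^{(j)}\dS$ and the symmetry of $M$ then finish the job in two lines. The paper instead re-runs an energy argument from scratch: it sets $H:=c_kH^{(k)}$, $P:=c_kP^{(k)}$, observes that $c_kg^{(k)}=0$ means ${\mathbb T}(H,P)n=0$ on $\partial\Omega$, deduces via the Navier boundary condition \eqref{Hibdry} that $H=c_k\widetilde{\textsf{e}_k}$ on $\partial\Omega$, shows $\|{\mathbb D}(H)\|_{2,\Omega}=0$ by integrating $H\cdot{\mathbb T}(H,P)n$ over $\partial\Omega$, and concludes $c_k=0$ from Lemma~\ref{legraab}. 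Both proofs ultimately rest on the same Korn-type lemma, but yours exploits work already done, while the paper's is self-contained and, as a byproduct, gives a concrete PDE picture (the linear combination $H$ solves an exterior Stokes problem with rigid Dirichlet data and zero traction) that is reused structurally in the subsequent lemma on ${\mathcal T}_\tau({\mathcal B}_0)$, where the traction-based reduction to $M$ is no longer available. So the paper's choice is not redundancy for its own sake; still, for this particular lemma your shortcut is perfectly valid.
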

\begin{proof} Let $c_k \in \mathbb R$, $k=1,...,6$, be such that $c_k g^{(k)} = 0$ in $L^2(\partial \Omega)$. Here and throughout the proof, we use the Einstein summation convention. Our aim is to show that $c_k = 0$ for all $k \in \{1,...,6\}$. Setting $H:= c_k H^{(k)}$ and $P:= c_k P^{(k)}$, we have that $(H,P) \in H^{2,1}_{\rho}(\Omega)^3 \times H^{1,1}_{\rho} (\Omega)$ satisfies
\[
\left\{
\begin{aligned}
 \nabla \cdot  {\mathbb T}(H,P)  = 0,\quad \nabla \cdot H  = 0   \text{ in }\Omega,  \\
 H \cdot n  =  c_k \widetilde{\textsf{e}_k} \cdot n \mbox{ on }\partial\Omega,  \\ 
\displaystyle 2 [{\mathbb D}(H) n] \times n + \alpha H \times n 
 =  c_k \widetilde{\textsf{e}_k} \times n \mbox{ on }\partial\Omega,  \\  
 {\mathbb T}(H,P)  n  = 0 \text{ on } \partial\Omega,  \\ 
\displaystyle \lim_{\left| x\right| \rightarrow \infty }H(x)  =0 .
\end{aligned}
\right.
\]
Let $H_{\mathcal S}: = c_k \widetilde{\textsf{e}_k} \in {\mathcal R}$. Since ${\mathbb T}(H,P)  n = 0$ on $\partial\Omega$, and consequently,
\[
H = H_{\mathcal S} - \frac{2}{\alpha} [{\mathbb D}(H) n]_{\tau} = 
H_{\mathcal S} - \frac{2}{\alpha} \left[ {\mathbb T}(H,P)  n \right]_{\tau} =  H_{\mathcal S} \text{ on } \partial\Omega,
\]
it follows that $(H,P)$ satisfies
\[
\left\{
\begin{aligned}
 \nabla \cdot  {\mathbb T}(H,P)  = 0,\quad
  \nabla \cdot H = 0   \text{ in }\Omega,  \\
 H  =  H_{\mathcal S}  \text{ on }\partial\Omega, 
 \\ 
 {\mathbb T}(H,P)  n  = 0  \text{ on } \partial\Omega, \\ 
 \lim_{\left| x\right| \rightarrow \infty }H(x)  = 0 .
\end{aligned}
\right.
\]
We use the relation
\[
\| {\mathbb D}(H) \|^2_{2,\Omega} = \int_{\partial \Omega} H_{\mathcal S} \cdot {\mathbb T}(H,P)  n \dS = 0
\]
and Lemma \ref{legraab} to obtain
\[
|c_k| \leq C \| {\mathbb D}(H) \|_{2,\Omega} = 0, \quad k=1,...,6.
\]
It implies $c_k=0$, $k=1,...,6$.
\end{proof}

\subsection{Thrust space formed by tangential motions}

In case the self-propelled motion occurs because the body moves tangentially its boundary, the \emph{thrust space} is defined as
\begin{equation}\label{thrust-tau}
{\mathcal{T}}_{\tau}({\mathcal{B}_0}):=\mbox{span}\{[g^{(1)}]_\tau,...,[g^{(6)}]_\tau\}.
\end{equation}
\begin{lemma}
Let $\partial\Omega$ be of class $C^{2,1}$. Then the system of vector functions $\left\{[g^{(1)}]_\tau,...,[g^{(6)}]_\tau\right\} \subset L^2(\partial\Omega )$
is linearly independent.
\end{lemma}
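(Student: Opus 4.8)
I will follow the template of Lemma \ref{lemali}: assume a linear relation $c_k[g^{(k)}]_\tau=0$ in $L^2(\partial\Omega)$, set $H:=c_kH^{(k)}$, $P:=c_kP^{(k)}$, $H_{\mathcal S}:=c_k\widetilde{\textsf{e}_k}=\xi+\omega\times x\in\mathcal{R}$, and deduce $c=0$. The new feature is that only the \emph{tangential} traction is assumed to vanish, so the boundary integral in the energy identity no longer drops out and the argument of Lemma \ref{lemali} does not close. First, since $[n]_\tau=0$, \eqref{Hibdry} gives $[g^{(k)}]_\tau=2[\mathbb{D}(H^{(k)})n]_\tau=-\alpha\,(H^{(k)}-\widetilde{\textsf{e}_k})\big|_{\partial\Omega}$, and $H^{(k)}-\widetilde{\textsf{e}_k}$ is tangential on $\partial\Omega$ by \eqref{astok}$_3$; hence the hypothesis is equivalent to $H=H_{\mathcal S}$ on $\partial\Omega$, and feeding this back into \eqref{Hibdry} also yields $[\mathbb{D}(H)n]_\tau=0$ on $\partial\Omega$. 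Thus, by Lemma \ref{lehi}, $(H,P)\in H^{2,1}_\rho(\Omega)^3\times H^{1,1}_\rho(\Omega)$ solves the exterior Stokes system, decays at infinity, and satisfies $H=H_{\mathcal S}$ and $[\mathbb{D}(H)n]_\tau=0$ on $\partial\Omega$. From this I next extract $\nabla H=\nabla H_{\mathcal S}$ on $\partial\Omega$: put $w:=H-H_{\mathcal S}$, which is solenoidal with $\mathbb{D}(w)=\mathbb{D}(H)$ and $w=0$ on $\partial\Omega$, so $\nabla w=n\otimes\partial_n w$ there ($\partial_n:=n\cdot\nabla$); then $[\partial_n w]_\tau=2[\mathbb{D}(w)n]_\tau=0$ and $\partial_n w\cdot n=\mathrm{tr}(\nabla w)=\nabla\cdot w=0$, hence $\partial_n w=0$.

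Now pass to the vorticity $\eta:=\nabla\times H$. From $\Delta H=\nabla P$ (the Stokes equation together with $\nabla\cdot H=0$) we get $\Delta\eta=\nabla\times(\nabla P)=0$, so each component of $\eta$ is harmonic in $\Omega$, and $\eta\in H^{1,0}_\rho(\Omega)^3$ because $H\in H^{2,1}_\rho(\Omega)^3$; by the previous step $\eta=\nabla\times H_{\mathcal S}=2\omega$ on $\partial\Omega$. Let $\varphi\in H^{1,0}_\rho(\Omega)$ be the harmonic function with $\varphi=1$ on $\partial\Omega$ (unique, and decaying at infinity, by the standard weighted-space theory, cf.\ \cite{G}); then uniqueness for the exterior Dirichlet Laplacian in $H^{1,0}_\rho$ (integrate by parts; the term at infinity vanishes by the weighted decay) gives $\eta=2\omega\,\varphi$ in $\Omega$. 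Consequently $0=\nabla\cdot\eta=\nabla\cdot(\nabla\times H)$ forces $\omega\cdot\nabla\varphi\equiv0$ in $\Omega$. But $\varphi$ is nonconstant ($0<\varphi<1$ in $\Omega$ with $\varphi\to0$ at infinity, by the maximum principle): taking $p\in\partial\Omega$ that maximizes $x\mapsto x\cdot\omega$ over $\overline{\mathcal{B}_0}$, the ray $\{p+s\omega:s>0\}$ lies in $\Omega$ and $\varphi$ is constant along it, with boundary value $\varphi(p)=1$ as $s\to0^+$ but limit $0$ as $s\to\infty$ — impossible unless $\omega=0$. Hence $\omega=0$ and $\eta\equiv0$, so $H$ is irrotational and solenoidal, hence harmonic; since $H=\xi$ on $\partial\Omega$ and $H\to0$, the same uniqueness argument gives $H=\xi\,\varphi$ in $\Omega$, whence $0=\nabla\cdot H=\xi\cdot\nabla\varphi$ forces $\xi=0$ exactly as before. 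Therefore $c_k=0$ for all $k$, which proves the linear independence.

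The step I expect to be the genuine obstacle is the analysis of the overdetermined exterior Stokes problem in the two middle claims — equivalently, ruling out a nonzero decaying exterior Stokes flow that simultaneously equals a rigid motion and has vanishing tangential traction on $\partial\Omega$. The device that makes it tractable is passing to the vorticity, which satisfies a \emph{scalar} Laplace equation with constant Dirichlet data; the divergence-free constraint together with the elementary geometric fact that the capacity potential of $\mathcal{B}_0$ is nonconstant and decays then eliminates the rigid velocity. Alternatively one could invoke unique continuation for the stationary Stokes system from the vanishing Cauchy data $w=\nabla w=0$ on $\partial\Omega$, but the vorticity argument keeps the proof self-contained.
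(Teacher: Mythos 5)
Your proof is correct, but it takes a genuinely different route from the paper's. Both start from the same observations: after setting $H:=c_kH^{(k)}$, $P:=c_kP^{(k)}$, $H_{\mathcal S}:=c_k\widetilde{\textsf{e}_k}$, the hypothesis $c_k[g^{(k)}]_\tau=0$ together with the Navier boundary conditions yields $H=H_{\mathcal S}$ on $\partial\Omega$, $[\mathbb{D}(H)n]_\tau=0$, and (by your computation with $w=H-H_{\mathcal S}$, or the paper's equivalent use of the surface divergence relation) the full Cauchy data $\nabla(H-H_{\mathcal S})=0$ on $\partial\Omega$. The two arguments then diverge. The paper works with the pressure: it invokes the Laplace--Beltrami decomposition to show $\Delta(H-H_{\mathcal S})\cdot n=0$ on $\partial\Omega$, concludes that $P$ solves a homogeneous exterior Neumann problem and therefore vanishes, so $\mathbb{T}(H,P)n=0$ on $\partial\Omega$, and finishes with the energy identity $\|\mathbb{D}(H)\|^2_{2,\Omega}=\int_{\partial\Omega}H\cdot\mathbb{T}(H,P)n\,dS=0$ and Lemma~\ref{legraab}. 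You instead work with the vorticity: $\eta=\nabla\times H$ is harmonic, equals the constant $2\omega$ on $\partial\Omega$, lies in $H^{1,0}_\rho$, and is therefore $2\omega\varphi$ with $\varphi$ the capacity potential; the constraint $\nabla\cdot\eta=0$ plus the ray argument on $\varphi$ force $\omega=0$, after which $H$ itself is harmonic with boundary value $\xi$ and the same argument kills $\xi$. Your route avoids the Laplace--Beltrami machinery and the pressure Neumann problem entirely, substituting exterior potential theory and a simple geometric observation; this is somewhat cleaner and more self-contained, at the cost of introducing the capacity potential and its uniqueness (which you verify via the standard cutoff argument, consistent with the weighted setting used throughout the paper). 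The paper's route, by contrast, keeps the proof inside the Stokes energy framework already set up for Lemma~\ref{lemali} and does not leave the variational toolkit. Both are valid; yours is arguably the more elegant.
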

\begin{proof} Suppose  $c_k \in \mathbb R$, $k=1,...,6$, are such that $c_k [g^{(k)}]_\tau = 0$ at $\partial \Omega$. Setting $H:= c_k H^{(k)}$ and $P:= c_k P^{(k)}$, we have 
\[
\left\{
\begin{aligned} \displaystyle 
 \nabla \cdot  {\mathbb T}(H,P)  = 0,\quad
\nabla \cdot H = 0 \text{ in }\Omega,  \\
\displaystyle H \cdot n
 = c_i \widetilde{\textsf{e}_i} \cdot n  \text{ on }\partial\Omega,
 \\ 
 2 [{\mathbb D}(H) n] \times n + \alpha H \times n 
 = \alpha c_i \widetilde{\textsf{e}_i} \times n \text{ on }\partial\Omega, \\  
\left[ {\mathbb T}(H,P)  n \right]_{\tau}  = 0  \text{ on } \partial\Omega,  \\ 
\displaystyle \lim_{\left| x\right| \rightarrow
\infty }H(x)  = 0.
\end{aligned}
\right.
\]
Observe that $2 [{\mathbb D}(H) n]_{\tau} = \left[ {\mathbb T}(H,P)  n \right]_{\tau} = 0$. If we define $H_{\mathcal S}: = c_i \widetilde{\textsf{e}_i}$, we have $H = H_{\mathcal S} - \frac{2}{\alpha} [{\mathbb D}(H) n]_{\tau}$. We immediately deduce that $H =  H_{\mathcal S} \text{ on } \partial\Omega$,
and due to the fact that $\nabla H_{\mathcal S}$ is skew-symmetric, we have
\[
 {\mathbb T}(H,P)  n  = \left( n \cdot {\mathbb T}(H,P)  n \right) n =  2 \left(n \cdot \nabla H \cdot n \right) n - P n = 2 \left( n \cdot \nabla (H-H_{\mathcal S}) \cdot n \right) n - P n \text{ on } \partial\Omega.
\]
{ Let  $U:= H - H_{\mathcal S}$ and recall the relation between the surface divergence at $\partial \Omega$ and the divergence in Cartesian coordinates: $\nabla_{\tau} \cdot U   = \nabla \cdot U - n \cdot \nabla U  \cdot n \text{ on } \partial\Omega$. Since $U = 0$ on $\partial\Omega$ and $\nabla \cdot U = 0$ in $\Omega$, 
we have 
\begin{equation}\label{decompose1}
  n \cdot \nabla U \cdot n = \frac{\partial U_j}{\partial n} n_j = 0 \text{ on } \partial \Omega,
\end{equation}
and consequently $
 {\mathbb T}(H,P)  n =  - P n$ on  $\partial\Omega$}. Observe that the pressure $P$ satisfies
\begin{equation}\label{eq:P}
\left\{
\begin{aligned}
 \Delta P =0 \quad \mbox{ in }\Omega, \\
 \frac{\partial P}{\partial n} =  \Delta H \cdot n  \quad \mbox{ on }\partial\Omega,  \\ 
 \lim_{\left| x\right| \rightarrow \infty }P(x) =0. 
\end{aligned}
\right.
\end{equation}
It is obvious that $
\Delta H \cdot n  = \Delta U  \cdot n $. For each component $U_j$, the relation between the Laplace-Beltrami operator
and the Cartesian Laplacian reads  (see, for example,  \cite{Xu})
\begin{equation}\label{Deltatau}
\Delta_{\tau} U_j = \Delta U_j - (\nabla \cdot n) \frac{\partial U_j}{\partial n}   - n \cdot (\nabla \otimes \nabla U_j)  n, \quad j=1,2,3,  
\end{equation}
where the surface Laplacian on $\partial \Omega$ is denoted by $\Delta_{\tau}$ and $\nabla \otimes \nabla U_j$ stands for the Hessian matrix of $U_j$.
Since $U = 0$ on $\partial \Omega$, we have 
$\Delta_{\tau} U =  0 
$ on $\partial \Omega$. 
Note that $\left[ {\mathbb T}(U,P)  n \right]_{\tau}  = \left[ {\mathbb T}(H,P)  n \right]_{\tau} = 0$ and \eqref{decompose1} yield ${\mathbb D}(U)n=0$ on $\partial\Omega$.
The condition ${\mathbb D}(U)n=0$ is equivalent to
\begin{equation*}
\frac{\partial U_i}{\partial x_j}n_j 
  = -\,\frac{\partial U_j}{\partial x_i}n_j
  \quad\text{on } \partial\Omega, \quad i=1,2,3.
\end{equation*}
Differentiating the above relation with respect to $x_k$ gives
\begin{equation*}
\frac{\partial^2 U_i}{\partial x_k \partial x_j}n_j
  + \frac{\partial U_i}{\partial x_j}\frac{\partial n_j}{\partial x_k}
  = -\,\frac{\partial^2 U_j}{\partial x_k \partial x_i}n_j
  - \frac{\partial U_j}{\partial x_i}\frac{\partial n_j}{\partial x_k},
  \qquad i,k=1,2,3.
\end{equation*}
Multiplying this identity by $n_i n_k$ and summing over $i,k$ yields
\begin{align*}
\frac{\partial^2 U_i}{\partial x_k \partial x_j} n_i n_j n_k
  + \frac{\partial U_i}{\partial x_j}\frac{\partial n_j}{\partial x_k} n_i n_k
  &=-\,\frac{\partial^2 U_j}{\partial x_k \partial x_i} n_i n_j n_k
  - \frac{\partial U_j}{\partial x_i}\frac{\partial n_j}{\partial x_k} n_i n_k.
\end{align*}
Hence, by using ${\mathbb D}(U)n=0$ on $\partial\Omega$, we obtain
\begin{align}\label{decompose2}
2\,n \cdot (\nabla\!\otimes\nabla U_j) n\, n_j
 &= -\left(\frac{\partial U_j}{\partial x_i}
     + \frac{\partial U_i}{\partial x_j}\right)
    n_i\,\frac{\partial n_j}{\partial x_k}n_k
 = -2\,[{\mathbb D}(U)n]_j\,\frac{\partial n_j}{\partial n}
 = 0.
\end{align}
Thus, due to the decomposition \eqref{Deltatau}, the fact that $\Delta_{\tau} U =  0 
$ on $\partial \Omega$ and the relations \eqref{decompose1}, \eqref{decompose2}, we have  \begin{equation*}
\Delta U \cdot n = (\Delta_{\tau} U_j) n_j + (\nabla \cdot n) \frac{\partial U_j}{\partial n} n_j  + n \cdot (\nabla \otimes \nabla U_j) \cdot n n_j = 0 \quad \mbox{on}\quad \partial\Omega.
\end{equation*}
Thus, using the fact $
\Delta H \cdot n  = \Delta U  \cdot n $ and pressure equation \eqref{eq:P}, we conclude that {$(H,P)=(H,0)$} satisfies
\[
\left\{
\begin{aligned}
\displaystyle \nabla \cdot  {\mathbb T}(H,P)  = 0,\quad
  \nabla \cdot H = 0  \quad  \text{ in }\Omega,  \\
\displaystyle H  =  H_{\mathcal S} \quad  \text{ on }\partial\Omega,  \\ 
 {\mathbb T}(H,P)  n  = 0 \quad  \text{ on } \partial\Omega, \\ 
\displaystyle \lim_{\left| x\right| \rightarrow \infty }H(x)  = 0 .
\end{aligned}
\right.
\]
Using the relation
\[
\| {\mathbb D}(H) \|^2_{2,\Omega} = \int\limits_{\partial \Omega} H \cdot {\mathbb T}(H,P)  n \ dS = 0
\]
and Lemma \ref{legraab}, we conclude $c_k=0$, $k=1,...,6$.\end{proof}

 \begin{proof}[Proof of Theorem \ref{Mainresult2}] We multiply \eqref{eqn1tan}$_1$ by $\psi_R H^{(i)}$, where $\psi_R$ is the cut-off function \eqref{coff},  do integration by parts, and let $R \to \infty$, to obtain: 
 \begin{equation*}
 -2\int_{\Omega} \mathbb{D}(v): \mathbb{D}(H^{(i)}) \dx+  \int_{\partial \Omega}{\mathbb T}(v,p)n\cdot H^{(i)} \dS=0.
 \end{equation*}
 We use the boundary conditions \eqref{astok}$_{3-4}$ to obtain the following expression of $H^{(i)}$ as in \eqref{Hibdry}:
\begin{equation*}
H^{(i)} =  \widetilde{\textsf{e}_i}  - \frac{2}{\alpha} [{\mathbb D}(H^{(i)}) n]_{\tau} \quad\mbox{on}\quad \partial\Omega.
\end{equation*}
Thus, using the above two expressions, we get 
\begin{equation}\label{rel1}
2\int\limits_{\Omega} \mathbb{D}(v): \mathbb{D}(H^{(i)}) \dx =  \int_{\partial\Omega}  {\mathbb T}(v,p)n\cdot \widetilde{\textsf{e}_i}\dS -  \frac{2}{\alpha} \int_{\partial\Omega}  [{\mathbb D}(H^{(i)}) n]_{\tau} [{\mathbb D}(v) n]_{\tau} \dS.
\end{equation}
Now we multiply \eqref{astok}$_{1}$ by $\psi_R v$, do integration by parts, and let $R \to \infty$, to have 
 \begin{equation*}
 2\int\limits_{\Omega} \mathbb{D}(v): \mathbb{D}(H^{(i)}) \dx=  \int_{\partial\Omega}  {\mathbb T}(H^{(i)}, P^{(i)})n\cdot v \dS.
 \end{equation*}
 As in Remark \ref{BCv}, we have 
 \begin{equation*}
 v  -V =  [(v  -V)\cdot n ] n + n \times [ (v-V) \times n )] =  v_{*}  - \frac{2}{\alpha} n \times ( [{\mathbb D}(v) n] \times n).
\end{equation*}
Thus, using the above two expressions, we obtain 
\begin{equation}\label{rel2}
2\int_{\Omega} \mathbb{D}(v): \mathbb{D}(H^{(i)}) \dx = \int_{\partial\Omega} (v_{*}+V)\cdot {\mathbb T}(H^{(i)},P^{(i)})n \dS -  \frac{2}{\alpha} \int_{\partial\Omega}  [{\mathbb D}(H^{(i)}) n]_{\tau} [{\mathbb D}(v) n]_{\tau} \dS.
\end{equation}
Comparing \eqref{rel1}--\eqref{rel2}, we get
\begin{equation*}\label{rel3}
\int_{\partial\Omega}  {\mathbb T}(v,p)n\cdot \widetilde{\textsf{e}_i}\dS =  \int_{\partial\Omega}  (v_{*}+V)\cdot {\mathbb T}(H^{(i)},P^{(i)})n \dS.
\end{equation*}
Recall $g^{(i)}:={\mathbb T}(H^{(i)},P^{(i)})  n|_{\partial\Omega }$, $i=1,...,6$. Then by using the self-propelling conditions \eqref{eqn1tan}$_5$, we obtain 
\begin{equation}\label{rel4}
\int_{\partial\Omega}  (v_{*}+V)\cdot g^{(i)} \dS=0, \quad i=1,...,6.
\end{equation}
We want to rewrite the relation \eqref{rel4}. In order to do so, let us recall { $M \in {\mathbb R}^{6\times 6}$ from \eqref{mat} and \eqref{mijs}:
\begin{equation}\label{rel5}
M_{ij}= \int_{\partial\Omega} \tilde{e}_i \cdot  {\mathbb T}(H^{(j)},P^{(j)})  n \dS = (H^{(i)},H^{(j)})_{\mathcal V}, \quad i,j=1,\cdots,6.
\end{equation}}
Also recall $K,R,S \in {\mathbb R}^{3\times 3}$ from \eqref{def:KRS} and we can write $M$ in the following form
$$
M = \bigg[ \begin{array}{cc}K & S^\top \\ S & R  \end{array}\bigg].
$$
Let us define 
\begin{equation}\label{Wi}
W_i := - \int_{\partial\Omega} v_*\cdot g^{(i)} \dS,\quad i=1,\cdots ,6.
\end{equation}
As $V=\xi + \omega \times x$, using the relations \eqref{rel5} and \eqref{Wi}, we can rewrite \eqref{rel4} in the following way:
\begin{equation}
\begin{bmatrix}
(W_1,W_2,W_3)^{\top}\\
(W_4,W_5,W_6)^{\top}
\end{bmatrix}=M \begin{pmatrix}
\xi \\ \omega
\end{pmatrix}
\end{equation}
Let us denote
\begin{equation*}
A:= (K-S^{\top}R^{-1}S)^{-1},\quad B:= (R-SK^{-1}S^{\top})^{-1}.
\end{equation*}
We can solve 
\begin{align}
\xi = A \Big((W_1,W_2,W_3)- S^{\top}R^{-1}(W_4,W_5,W_6)\Big),\quad
\omega = B\Big((W_4,W_5,W_6)- SK^{-1}(W_4,W_5,W_6)\Big).\label{xi1}
\end{align}
Thus, the translational velocity $\xi$ of the rigid body is nonzero iff 
\begin{equation*}
(W_1,W_2,W_3)\neq S^{\top}R^{-1}(W_4,W_5,W_6),
\end{equation*}
and it's angular velocity $\omega$ is nonzero iff
\begin{equation*}
(W_4,W_5,W_6)\neq S K^{-1}(W_1,W_2,W_3).
\end{equation*}
\end{proof}

 \begin{proof}[Proof of Theorem \ref{selfprop-NS}]
 Under the assumptions, according to the Theorem \ref{Mainresult3}, we know that the problem \eqref{eqn1:NS} admits at least one weak solution $(v,V)\in H^{1,0}_{\rho}(\Omega) \times \mathcal{R}$. We want to show that $V\neq 0$. The idea is to compare the solution of the Stokes-rigid body system with the Navier-Stokes-rigid body system and use the result of Theorem \ref{Mainresult2}.
 
 Let $(v_0,V_0)$ be a solution of \eqref{eqn1tan} and $(v,V)$ be a solution of \eqref{eqn1:NS}. Multiply the first equation of \eqref{astok} by $\psi_R (v - v_0)$, where $v$ is the velocity field of the Navier-Stokes problem, $v_0$ is the velocity of the Stokes problem and $\psi_R$ is the cut-off function defined in \eqref{coff},  and integrate by parts:
\begin{equation}
\label{mvv0}
\begin{array}{rcl}
0 & =  & 
\displaystyle \int_{\Omega} \psi_R [ \nabla \cdot  {\mathbb T}(H^{(i)},P^{(i)}) ] \cdot  (v-v_0)  \dx
\medskip \\
  &  =  & 
\displaystyle \int_{\partial \Omega} [ {\mathbb T}(H^{(i)},P^{(i)}) n ] \cdot (v - v_0) \dS
- 2  \int_{\Omega}  \psi_R  {\mathbb D}(H^{(i)}) : {\mathbb D}(v-v_0) \dx
\medskip \\
&  & \displaystyle - \int_{\Omega}   [ 2  {\mathbb D}(H^{(i)}) - P^{(i)} \mathbb{I} ] : [(v - v_0) \otimes \nabla \psi_R] \dx .
\end{array}
\end{equation}
Recalling $ g^{(i)} : = {\mathbb T}(H^{(i)},P^{(i)}) n $, it follows from \eqref{mvv0} that
\begin{equation}
\label{giR1}
\begin{aligned}
\int_{\partial \Omega} g^{(i)} \cdot (v - v_0) \dS
 = & \, 2  \int_{\Omega}  \psi_R  {\mathbb D}(H^{(i)}) : {\mathbb D}(v-v_0) \dx \\
&  + \int_{\Omega}   (2 {\mathbb D}(H^{(i)}) - P^{(i)} \mathbb{I}): ((v - v_0) \otimes \nabla \psi_R) \dx .
\end{aligned}
\end{equation}
Recalling \eqref{cut-est} together with the summability properties given by Lemma \ref{lehi} and $v - v_0 \in L^6(\Omega)$, we get
\[
\begin{aligned}
& \left| \int_{\Omega}   (2  {\mathbb D}(H^{(i)}) - P^{(i)} \mathbb{I}): ((v - v_0) \otimes \nabla \psi_R) \dx \right| \\
\leq & \,  \| \nabla \psi_R \|_{3,B_{R,2R}}\left(2 \| {\mathbb D}(H^{(i)})\|_{2,\Omega}+\| P^{(i)} \|_{2,\Omega}\right) \|v-v_0\|_{6,B_{R,2R}} \to 0, \quad \text{as} \ R\to \infty,
\end{aligned}
\]
where $B_{R,2R}:= \{x \in {\mathbb R}^3 \mid R < |x| < 2R \}$.
Hence, letting $R\to \infty$ in \eqref{giR1}, yields 
\begin{equation}
\int_{\partial \Omega} g^{(i)} \cdot (v - v_0) \dS
= 2 \int_{\Omega}  {\mathbb D}(H^{(i)}) :  {\mathbb D}(v-v_0) \dx.
\label{rgdhi1}
\end{equation}
Now, for $V=v_{\mathcal S}$ and $V_0 = (v_0)_{\mathcal S}$, we can write
\[
\int_{\partial \Omega} g^{(i)} \cdot (v - v_0) \dS = \int_{\partial \Omega} g^{(i)} \cdot [ (v - v_0) - (V - V_0) ] \dS + \int_{\partial \Omega} g^{(i)} \cdot (V - V_0) \dS
\]
where, due to $(v - v_0) \cdot n  =  (V - V_0) \cdot n$,
\[
\int_{\partial \Omega} g^{(i)} \cdot [ (v - v_0) - (V - V_0) ] \dS = \int_{\partial \Omega} [g^{(i)}]_\tau \cdot [ (v - v_0) - (V - V_0) ]_\tau \dS.
\]
Since $[g^{(i)}]_\tau = 2 [ {\mathbb D}(H^{(i)}) n ]_\tau = \alpha [\widetilde{\textsf{e}_i}  - H^{(i)} ]_\tau$, we get
\[
\int_{\partial \Omega} g^{(i)} \cdot (v - v_0) \dS = \alpha \int_{\partial \Omega} [\widetilde{\textsf{e}_i}  - H^{(i)} ]_\tau \cdot [ (v - v_0) - (V - V_0) ]_\tau \dS + \int_{\partial \Omega} g^{(i)} \cdot (V - V_0) \dS
\]
and using \eqref{rgdhi1}, we obtain
\begin{equation}\label{VV0}
\int_{\partial \Omega} g^{(i)} \cdot (V - V_0) \dS
= 2 \int_{\Omega} {\mathbb D}(H^{(i)}) : {\mathbb D}(v-v_0) \dx + \alpha \int_{\partial \Omega} [ H^{(i)} - \widetilde{\textsf{e}_i} ]_\tau \cdot [ (v - v_0) - (V - V_0) ]_\tau \dS.
\end{equation}
Here $V=\xi + \omega \times x$ and $V_0=\xi_0 + \omega_0 \times x$. Let us set 
\begin{align*}
\widetilde{\xi}&= \xi-\xi_0,\quad \widetilde{\omega}= \omega-\omega_0,\\ 
\mathcal{F}_i = 2 \int_{\Omega} {\mathbb D}(H^{(i)}) : {\mathbb D}(v-v_0) \dx &+ \alpha \int_{\partial \Omega} [ H^{(i)} - \widetilde{\textsf{e}_i} ]_\tau \cdot [ (v - v_0) - (V - V_0) ]_\tau \dS,\quad i=1,2,\cdots 6.
\end{align*}
Keeping the above notations, we use the equation \eqref{VV0} and obtain 
\begin{align}\label{sol:wideom}
\widetilde{\xi} = A \Big((\mathcal{F}_1,\mathcal{F}_2,\mathcal{F}_3)- S^{\top}R^{-1}(\mathcal{F}_4,\mathcal{F}_5,\mathcal{F}_6)\Big),\
\widetilde{\omega} = B\Big((\mathcal{F}_4,\mathcal{F}_5,\mathcal{F}_6)- SK^{-1}(\mathcal{F}_4,\mathcal{F}_5,\mathcal{F}_6)\Big),
\end{align}
where $A:= (K-S^{\top}R^{-1}S)^{-1},\quad B:= (R-SK^{-1}S^{\top})^{-1}$. As in \cite[Theorem 6.2]{Galdi1999}, we have the following result 
\begin{equation}\label{bdF}
\forall\ \eta>0, \exists \ C>0\mbox{ such that }\mathrm{Re}< C\mbox{ implies }|(\mathcal{F}_1,\cdots,\mathcal{F}_6)|<\eta.
\end{equation}
Using the upper bounds of the entries of the matrices $A, B, S^{\top}R^{-1}, SK^{-1}$ and the result \eqref{bdF}, we obtain from the relation \eqref{sol:wideom} that
\begin{equation}\label{ineqxi}
|\xi_0|-c\eta \leq |\xi| \leq |\xi_0|+c\eta, \quad |\omega_0|-c\eta \leq |\omega| \leq |\omega_0|+c\eta.
\end{equation}
We know from \eqref{xi1} that
\begin{align}\label{xi0}
\xi_0 = A \Big((W_1,W_2,W_3)- S^{\top}R^{-1}(W_4,W_5,W_6)\Big),\quad
\omega_0 = B\Big((W_4,W_5,W_6)- SK^{-1}(W_4,W_5,W_6)\Big).
\end{align}
In particular, let us choose $c\eta=\frac{1}{2}\min\{|\xi_0|,|\omega_0|\}$. In view of \eqref{ineqxi} and \eqref{xi0}, if $(W_1,W_2,W_3)\neq S^{\top}R^{-1}(W_4,W_5,W_6)$, then 
\begin{equation*}
\frac{1}{2}|A\left((W_1,W_2,W_3) - S^{\top}R^{-1}(W_4,W_5,W_6)\right)| \leq |\xi| \leq \frac{3}{2}|A\left((W_1,W_2,W_3) - S^{\top}R^{-1}(W_4,W_5,W_6)\right)|,
\end{equation*}
while, if $(W_4,W_5,W_6)\neq S K^{-1}(W_1,W_2,W_3)$, then 
\begin{equation*}
\frac{1}{2}|B\left((W_4,W_5,W_6) - S K^{-1}(W_1,W_2,W_3)\right)| \leq |\omega| \leq \frac{3}{2}|B\left((W_4,W_5,W_6) - S K^{-1}(W_1,W_2,W_3)\right)|.
\end{equation*}
 \end{proof}

\end{document}